\let\C\undefined
\numberwithin{equation}{section}
\newtheorem{theorem}{Theorem}[section]
\newtheorem{lemma}[theorem]{Lemma}
\newtheorem{corollary}[theorem]{Corollary}
\theoremstyle{remark}
\newtheorem*{remark}{Remark}
\newtheorem*{example}{Example}
\newtheorem{claim}{Claim}
\numberwithin{claim}{theorem}
\theoremstyle{definition}
\newtheorem{definition}[theorem]{Definition}
\author{Fabian Gundlach \and Jürgen Klüners}
\title{Symmetries of power-free integers in number fields and their shift spaces}
\subjclass{37B10,11R04,15A86}
\renewcommand{\P}{\mathcal P}
\newcommand{\X}{\mathbb X}
\newcommand{\Y}{\mathbb Y}
\newcommand{\T}{\mathcal T}
\DeclareMathOperator{\ExSym}{ExSym}
\newcommand{\LinSym}{\mathcal H}
\DeclareMathOperator{\meas}{meas}
\begin{document}

\begin{abstract}
We describe the group of $\mathbb Z$-linear automorphisms of the ring of integers of a number field~$K$ that preserve the set $V_{K,k}$ of $k$th power-free integers: every such map is the composition of a field automorphism and the multiplication by a unit.

We show that those maps together with translations generate the extended symmetry group of the shift space $\mathbb D_{K,k}$ associated to $V_{K,k}$.
Moreover, we show that no two such dynamical systems $\mathbb D_{K,k}$ and $\mathbb D_{L,l}$ are topologically conjugate and no one is a factor system of another.

We generalize the concept of $k$th power-free integers to sieves and study the resulting admissible shift spaces.
\end{abstract}

\maketitle

\section{Introduction}
\label{introduction}

For any integer $k\geq2$, consider the set $V_{K,k}$ of \emph{$k$-free integers} (also called $k$th power-free integers) in a number field $K$, i.e.{} the set of integers $x\in\O_K$ that are not divisible by the $k$-th power of any prime ideal. For any prime number $p$, let $V_{K,k,p}$ be the set of residue classes modulo $p^k$ that are not divisible by the $k$-th power of any prime ideal dividing $p$.

\subsection{Symmetries}
In this paper, we compute the group of $\Z$-linear symmetries of the set $V_{K,k}$. More precisely, we show:

\begin{theorem}
\label{intro_symmetries}
Let $K$ be a number field and let $A:\O_K\ra\O_K$ be a $\Z$-linear bijection. Note that $A$ gives rise to a $\Z$-linear bijection $A:\O_K/p^k\O_K \ra \O_K/p^k\O_K$ for every prime number $p$. Then, the following are equivalent:
\begin{enumerate}[label=(\alph*)]
\item The map $A$ satisfies the global condition $A(V_{K,k})=V_{K,k}$.
\item The map $A$ satisfies the local condition $A(V_{K,k,p})=V_{K,k,p}$ for all prime numbers $p$.
\item The map $A$ is the composition $M_\varepsilon\circ\tau$ of a field automorphism $\tau$ of $K$ and the multiplication by $\varepsilon$ map $M_\varepsilon$ for some $\varepsilon\in\O_K^\times$ (given by $M_\varepsilon(x)=\varepsilon x$).
\end{enumerate}
\end{theorem}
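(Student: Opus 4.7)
I would attack the three implications in the cycle (c) $\Rightarrow$ (a) $\Rightarrow$ (b) $\Rightarrow$ (c). The first is immediate: a field automorphism $\tau$ permutes the prime ideals of $\O_K$ (preserving their $k$-th powers), and multiplication by a unit does not change any $\mathfrak{p}$-adic valuation, so $M_\varepsilon \circ \tau$ restricts to a bijection of $V_{K,k}$. For (a) $\Rightarrow$ (b), $\Z$-linearity gives $A(p^k \O_K) = p^k \O_K$, so $A$ descends to a bijection of each $\O_K/p^k$; given $\bar x \in V_{K,k,p}$, the Chinese remainder theorem produces a lift $x \in \O_K$ with $v_\mathfrak{q}(x) < k$ at every prime $\mathfrak{q}$ not above $p$, and such an $x$ is globally $k$-free so that $A(x) \in V_{K,k}$ and its residue $A(\bar x)$ lies in $V_{K,k,p}$.

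The bulk of the work is (b) $\Rightarrow$ (c), which I would organize as a local structural analysis at totally split primes followed by a multiplicativity argument assembling the local data into the global decomposition. By Chebotarev applied to the Galois closure of $K$ there are infinitely many primes $p$ that split completely in $K$. For any such $p$, the ring isomorphism $\O_K/p^k \cong (\Z/p^k)^n$ via prime idempotents $e_1, \ldots, e_n$ identifies $V_{K,k,p}^c$ with the union $\bigcup_i H_i$ of the coordinate hyperplanes $H_i = \{x : x_i = 0\}$. The key claim is that the $H_i$ are characterized algebraically as the kernels of surjective $\Z/p^k$-linear functionals on $(\Z/p^k)^n$ which happen to be contained in $\bigcup_j H_j$. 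Indeed, for any surjective functional $\phi(x) = \sum \lambda_i x_i$ with at least two nonzero coefficients, one easily finds a vector in $\ker\phi$ with every coordinate a unit in $\Z/p^k$ (by setting most coordinates to $1$ and exploiting that $p^k \geq 4$ leaves enough freedom in the remaining two), contradicting containment in $\bigcup_j H_j$. Since $A \bmod p^k$ is automatically $\Z/p^k$-linear and preserves $V_{K,k,p}^c$, it must permute the $H_i$; intersecting shows $A$ permutes the lines $\bigcap_{j \neq i} H_j = \langle e_i \rangle$, and therefore $A(e_i) = u_i e_{\pi_p(i)}$ for a permutation $\pi_p$ and units $u_i$. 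Equivalently, $A \equiv M_{\varepsilon_p} \circ T_p \pmod{p^k}$, with $T_p$ a ring automorphism of $\O_K/p^k$ and $\varepsilon_p \equiv A(1) \pmod{p^k}$.

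The global decomposition then falls out of multiplicativity: because $T_p$ is a ring homomorphism, expanding $T_p(xy) = T_p(x) T_p(y)$ gives the congruence $A(1) \cdot A(xy) \equiv A(x) A(y) \pmod{p^k}$ for all $x, y \in \O_K$ and every totally split $p$. Since there are infinitely many such $p$ and $\bigcap_p p^k \O_K = 0$, this is actually an equality in $\O_K$, so $\tau := A(1)^{-1} A : K \ra K$ is unital and multiplicative; being a $\mathbb{Q}$-algebra endomorphism of the field $K$ (which is finite over $\mathbb{Q}$) it is automatically a field automorphism. Bijectivity of both $A$ and $\tau$ on $\O_K$ then forces $\varepsilon := A(1) \in \O_K^\times$, giving $A = M_\varepsilon \circ \tau$. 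The main obstacle is the local structural claim characterizing the $H_i$; once it is established, the passage to a global $\tau$ via multiplicativity is essentially formal.
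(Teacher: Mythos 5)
Your step (a) $\Rightarrow$ (b) has a genuine gap. The Chinese remainder theorem only lets you prescribe residues at \emph{finitely} many primes; it cannot produce a lift $x$ of $\bar x$ with $v_\q(x)<k$ at \emph{every} prime $\q$ not above $p$, since that is an infinite family of conditions. What you actually need is the surjectivity of the reduction map $V_{K,k}\ra V_{K,k,p}$, i.e.\ that every $k$-free residue class modulo $p^k$ contains a genuinely $k$-free integer. This is true, but it is precisely the nontrivial local-global principle of the paper (\Cref{thm_sqfree_local_global}, used via \Cref{cor_local_global_kfree}), whose proof is a sieve argument requiring a uniform tail estimate for the number of elements divisible by $\a^k$ with $\Nm(\a)$ large (\Cref{tail_estimate}); CRT alone handles only the finitely many primes above $p$ and says nothing about $k$-freeness elsewhere. (The paper's remark on sieves $R_{\p_i}=a_i+\p_i^2$ shows how badly the naive ``CRT heuristic'' can fail in general.) So this step must be replaced by, or reduced to, that local-global principle rather than waved through.

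Your (b) $\Rightarrow$ (c), by contrast, is a genuinely different route from the paper's and works after one repair. The claim that a surjective functional $\phi(x)=\sum_i\lambda_ix_i$ on $(\Z/p^k)^n$ with two nonzero coefficients has a kernel vector all of whose coordinates are \emph{units} is false in general (e.g.\ $\phi(x_1,x_2)=x_1+p^{k-1}x_2$ has no such vector, and even $x_1+x_2+x_3$ modulo $4$ fails with the third coordinate set to $1$); but you only need a kernel vector with all coordinates \emph{nonzero}, which already lies outside $\bigcup_jH_j$, and for $k\geq2$ this exists by the same counting: with the remaining coordinates fixed at $1$ and the unit-coefficient coordinate solved for, at most $p^{k-1}+1<p^k$ choices of the other free coordinate are bad. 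With that fix, $A\bmod p^k$ is monomial at every totally split $p$, and your globalization via the congruence $A(1)A(xy)\equiv A(x)A(y)\pmod{p^k}$ for infinitely many $p$, hence an identity in $\O_K$, is correct; the deduction that $\tau=A(1)^{-1}A$ is a field automorphism and $A(1)\in\O_K^\times$ then goes through. The paper instead extends $A$ to $K\otimes F$ for a splitting field $F$, shows it preserves invertibility by producing, at a split prime, an element of $V_p(K,R)$ mapped into the excluded set (\Cref{lemma_cover}), and concludes with the monomial-matrix lemma over a field (\Cref{geometric_lemma}, \Cref{lemma_norm_implies_b}); that route buys the much more general \Cref{thm_main} (arbitrary sieves, maps $\O_K\ra\O_L$, mere inclusions, only infinitely many good primes), which is needed later for the shift-space results, whereas your argument is tailored to the bijective self-map, $k$-free case of \Cref{intro_symmetries} and trades the tensor-product formalism for mod-$p^k$ counting plus the multiplicativity trick.
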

Note that the group of bijections satisfying (c) is isomorphic to $\O_K^\times\rtimes\Aut(K)$, where $\Aut(K)$ is the automorphism group of the number field $K$. (We do not assume $K$ to be a normal extension of $\Q$.)

The implication (c) $\Rightarrow$ (a) is clear. The implication (a) $\Rightarrow$ (b) follows immediately from a \emph{local-global principle} for $k$-free integers, namely the surjectivity of the reduction modulo $p^k$ map $V_{K,k}\ra V_{K,k,p}$ for $k\geq2$. (See \Cref{cor_local_global_kfree}.) The implication (b) $\Rightarrow$ (c) (see \Cref{cor_linear_maps}), which we show in \Cref{section_thm_main}, is the heart of this paper. (The implications (b) $\Rightarrow$ (a) and (c) $\Rightarrow$ (b) are also clear. The implication (a) $\Rightarrow$ (c) is harder.)

With the same method as before, we prove:

\begin{theorem}[see \Cref{cor_local_global_kfree,cor_linear_maps}]
\label{intro_linear_maps}
Let $K,L$ be number fields and let $A:\O_K\ra\O_L$ be a $\Z$-linear map. Then, the following statements satisfy the implications (a) $\Leftrightarrow$ (b) $\Rightarrow$ (c):
\begin{enumerate}[label=(\alph*)]
\item The map $A$ satisfies the global condition $A(V_{K,k})\subseteq V_{L,l}$.
\item The map $A$ satisfies the local condition $A(V_{K,k,p})\subseteq V_{L,l,p}$ for all prime numbers $p$.
\item The map $A$ is the composition $M_\varepsilon\circ\tau$ of a field homomorphism $\tau:K\ra L$ and the multiplication by $\varepsilon$ map $M_\varepsilon$ for some $\varepsilon\in\O_L$.
\end{enumerate}
\end{theorem}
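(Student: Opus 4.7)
The plan is to reuse the machinery developed for \Cref{intro_symmetries}, namely the local--global principle for $k$-free integers (\Cref{cor_local_global_kfree}) and the local classification of residue-class--preserving $\Z$-linear maps (\Cref{cor_linear_maps}), with only cosmetic changes to accommodate distinct number fields $K,L$ and possibly unequal exponents $k,l$.

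First I would establish (a)$\Leftrightarrow$(b). The implication (b)$\Rightarrow$(a) is formal: for $x\in V_{K,k}$ the class of $x$ modulo $p^k$ lies in $V_{K,k,p}$ for every prime $p$, so by (b) the corresponding class of $A(x)$ modulo $p^l$ lies in $V_{L,l,p}$; varying $p$ then forces $A(x)\in V_{L,l}$. For (a)$\Rightarrow$(b): given any residue class in $V_{K,k,p}$, the local--global principle \Cref{cor_local_global_kfree} produces a lift $x\in V_{K,k}$ representing it, so $A(x)\in V_{L,l}$ by (a); since $A$ is $\Z$-linear, the class of $A(x)$ modulo $p^l$ depends only on the class of $x$ modulo $p^{\max(k,l)}$, and this class lies in $V_{L,l,p}$, as required.

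For (b)$\Rightarrow$(c), the argument is a transcription of the proof of \Cref{cor_linear_maps}. Locally at each prime $p$, the constraint that $A$ sends $V_{K,k,p}$ into $V_{L,l,p}$ forces the induced $\Z$-linear map on residue rings to be the composition of (the reduction of) a ring embedding with multiplication by an element of $\O_L/p^l\O_L$. Because there are only finitely many field embeddings $\tau:K\hookrightarrow L$, the pigeonhole principle forces a single embedding $\tau$ to appear at infinitely many primes. The corresponding local scalars then assemble, via $\Z$-linearity and the Chinese remainder theorem, into a single $\varepsilon\in\O_L$ such that $A$ and $M_\varepsilon\circ\tau$ agree modulo $p^l$ for infinitely many $p$, and therefore agree as $\Z$-linear maps $\O_K\to\O_L$.

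The main obstacle is the local classification step, which is precisely the content of \Cref{cor_linear_maps} and is where the real work of the paper is done. Its extension to the present asymmetric setting should be routine: one replaces ``automorphism of $K$'' by ``embedding $K\hookrightarrow L$'' and ``unit in $\O_K^\times$'' by ``arbitrary element of $\O_L$'', tracking the exponent $l$ on the target side throughout. The degenerate case $A=0$, which corresponds to $\varepsilon=0$ in~(c), is consistent with (a) and (b) (both conditions are vacuous if the image misses $V_{L,l}$ only at $0$, which itself is not $l$-free) and should be singled out to avoid vacuous invocations of the local analysis.
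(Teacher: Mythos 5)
Your treatment of (a)$\Leftrightarrow$(b) is essentially the paper's: (b)$\Rightarrow$(a) is formal, and (a)$\Rightarrow$(b) follows by lifting a $k$-free residue class to an element of $V_{K,k}$; just note that the lifting statement is the strong-approximation variant \Cref{thm_sqfree_local_global} (applied with a modulus $p^N$, $N\geq\max(k,l)$), not \Cref{cor_local_global_kfree} itself, which is the equivalence you are proving. (Also, your aside about $A=0$ is off: since $V_{K,k}\neq\emptyset$ and $0\notin V_{L,l}$, conditions (a) and (b) simply fail for $A=0$; they are not vacuous.)

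The real problem is (b)$\Rightarrow$(c), where your proposal has a genuine gap: the asserted local rigidity is false. You claim that at each prime $p$ the condition $A(V_{K,k,p})\subseteq V_{L,l,p}$ already forces the induced map on residue rings to be (the reduction of) a ring embedding composed with a multiplication, and you then want to pigeonhole over embeddings and glue by CRT. But at a single prime no such classification holds. For example, if $p$ splits completely in $K=L$ of degree $2$, so $\O_{K,p}\cong\Z_p\times\Z_p$, the unipotent map $(x,y)\mapsto(x+p^k y,\;y)$ preserves $V_{K,k,p}$ (the first coordinate is only changed modulo $p^k$) yet is not of the form ``multiplication $\circ$ embedding'' modulo $p^k$; and at a prime that is inert in $L$ the local condition is weaker still (see the remark after \Cref{thm_main} and \Cref{fig_sqfree_sieve_nf}). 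So the prime-by-prime classification you defer to as ``routine'' does not exist, and the pigeonhole/CRT assembly never gets off the ground. The rigidity in this problem is a global phenomenon: the paper's proof (\Cref{cor_linear_maps} via \Cref{thm_main}) extends $A$ to an $F$-linear map $K\otimes F\to L\otimes F$ for a common splitting field $F$; if $A$ carries $(K\otimes F)^\times$ into $(L\otimes F)^\times$, the monomial-matrix \Cref{geometric_lemma} (via \Cref{lemma_norm_implies_b}) gives $A=M_\varepsilon\circ\tau$; otherwise one picks a suitable completely split prime $p$ at which a unit vector lies in the kernel of a coordinate of $A$, and the covering \Cref{lemma_cover} produces an element of $V_p(K,R)$ mapped into the excluded set, contradicting (b) at that single well-chosen prime. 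Using the hypothesis only at infinitely many split primes in this way is the heart of the argument, and your proposal does not supply a substitute for it.
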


Note that in (a) and (b), we only require inclusion, not equality.

\begin{remark}
The implication (c) $\Rightarrow$ (a) can fail. For example, take the inclusion $A=M_1\circ\id:\Z\hookrightarrow\Z[\sqrt3]$. We have $3\in V_{\Q,2}$ but $3=\sqrt{3}^2\notin V_{\Q(\sqrt3),2}$.
\end{remark}

\subsection{Shift spaces}

Following \cite{positive-entropy-shifts}, we can associate an $n$-dimensional shift space to the set of $k$-free integers in a number field $K$ of degree $n$: Let the group $\O_K\cong\Z^n$ act by translation on the set $\P(\O_K)$ of subsets of $\O_K$. This action is continuous with respect to the product topology on $\P(\O_K)\cong\{0,1\}^{\O_K}$. The \emph{shift space $\D_{K,k}$ associated to the $k$-free integers} is the topological dynamical system given by the (continuous) action of $\O_K$ on a closed subset $\X_{K,k}$ of $\P(\O_K)$. The set $\X_{K,k}$ can be defined in two ways:
\begin{itemize}
\item \emph{Globally}, as $\X_{K,k}:=\overline{\O_K+V_{K,k}}$, the closure of the orbit of $V_{K,k}\in\P(\O_K)$ under the action of $\O_K$.
\item \emph{Locally}, as the set of \emph{admissible} subsets $S$ of $\O_K$: subsets that miss at least one residue class modulo the $k$-th power of every prime ideal.
\end{itemize}

The equivalence of these two definitions (another local-global principle) was observed in \cite{positive-entropy-shifts}. We give a detailed proof of the equivalence in \Cref{thm_orbit_closure}.

Using a variant of \Cref{intro_symmetries}, we compute the extended symmetry group of the dynamical systems defined above (the normalizer of $\O_K$ in the homeomorphism group of $\X_{K,k}$):

\begin{theorem}[see \Cref{thm_automorphisms}]
\label{intro_automorphisms}
The extended symmetry group of the topological dynamical system $\D_{K,k}$ consists of the pairs $(f,A)$, where $A:\O_K\ra\O_K$ is of the form $A=M_\varepsilon\circ\tau$ with $\tau\in\Aut(K)$ and $\varepsilon\in\O_K^\times$, and where $f:\X_{K,k}\ra\X_{K,k}$ is defined by $f(S)=t+\varepsilon\cdot\tau(S)$ with $t\in\O_K$. In particular, the extended symmetry group is isomorphic to $\O_K\rtimes(\O_K^\times\rtimes\Aut(K))$.
\end{theorem}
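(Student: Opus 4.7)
Suppose $f:\X_{K,k}\to\X_{K,k}$ is a homeomorphism normalizing the translation action of $\O_K$. Conjugation of the translation subgroup by $f$ gives a $\Z$-linear bijection $A:\O_K\to\O_K$ satisfying $f\circ T_s=T_{A(s)}\circ f$ for every $s\in\O_K$. My plan is to show that $A=M_\varepsilon\circ\tau$ for some $\varepsilon\in\O_K^\times$, $\tau\in\Aut(K)$, and that $f(S)=t+A(S)$ for some $t\in\O_K$. The reverse direction, that every such pair lies in the extended symmetry group, is an immediate verification, and combined with the parametrization by the triple $(t,\varepsilon,\tau)$ it gives the asserted isomorphism to $\O_K\rtimes(\O_K^\times\rtimes\Aut(K))$.

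The crucial step is to extract the hypothesis of \Cref{intro_symmetries}(b), namely $A(V_{K,k,p})=V_{K,k,p}$ for every rational prime $p$. Continuity of $f$ on the product topology of $\X_{K,k}\subset 2^{\O_K}$, together with the twisted equivariance, yields (via a Curtis--Hedlund--Lyndon type argument) a finite window $F\subset\O_K$ and a local rule $\phi:2^F\to\{0,1\}$ with
$$x\in f(S) \iff \phi\bigl((S-A^{-1}(x))\cap F\bigr)=1$$
for every $S\in\X_{K,k}$. Applied to $S=V_{K,k}$, the Chinese remainder theorem splits the condition ``$A^{-1}(x)+u\in V_{K,k}$'' into a $p$-local part depending only on the class $A^{-1}(x)+u\bmod p^k$ (equivalently, on whether it lies in $V_{K,k,p}$) and a part governed by divisibility at primes $\mathfrak q\nmid p$. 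The surjectivity of the reduction $V_{K,k}\twoheadrightarrow V_{K,k,p}$ (\Cref{cor_local_global_kfree}), together with a strong-approximation argument, lets us prescribe the away-from-$p$ part arbitrarily for $x$ ranging in any fixed residue class mod $p^k$. Combining this freedom with the admissibility of $f(V_{K,k})$ (and, by the symmetric analysis, of $f^{-1}(V_{K,k})$) forces $A$ to restrict to a bijection of $V_{K,k,p}$. Then \Cref{intro_symmetries}, (b)$\Rightarrow$(c), yields $A=M_\varepsilon\circ\tau$.

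With $A(V_{K,k})=V_{K,k}$ established, the map $g:=A^{-1}\circ f$ is a genuine $\O_K$-equivariant self-homeomorphism of $\X_{K,k}$. A further application of the local-rule analysis to $g$, now untwisted, constrains the associated rule so strongly that $g$ must itself be a pure translation $g=T_r$ for some $r\in\O_K$; setting $t:=-A(r)$ then gives $f=T_t\circ A$, i.e.\ $f(S)=t+A(S)$ for every $S\in\X_{K,k}$. I expect the main obstacle to be the local-rule step behind $A(V_{K,k,p})=V_{K,k,p}$: one must carefully separate the $p$-local dependence of the window pattern on $V_{K,k}$ from its away-from-$p$ fluctuations, and then leverage the admissibility of $f(V_{K,k})$ and $f^{-1}(V_{K,k})$ to pin down the action of $A$ on $V_{K,k,p}$. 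Once $A$ is identified and the rule is untwisted, the subsequent reduction of $g$ to a translation follows by analogous but easier combinatorics.
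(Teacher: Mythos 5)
Your overall strategy (Curtis--Hedlund--Lyndon, reduce to the linear classification theorem, then show the residual equivariant map is a translation) is the same as the paper's, but there is a genuine gap at your central step. You claim that the local-rule analysis applied to $S=V_{K,k}$, together with the admissibility of $f(V_{K,k})$ and $f^{-1}(V_{K,k})$, forces $A(V_{K,k,p})=V_{K,k,p}$, so that \Cref{intro_symmetries} (b)$\Rightarrow$(c) applies. This is not available: a set in $\X_{K,k}$ is only required to miss \emph{some} residue class modulo $\p^k$ for each prime $\p$, not the class $0\bmod\p^k$, and $f(V_{K,k})$ is in general not $V_{K,k}$ (already a pure translation $f(S)=t+S$ sends $V_{K,k}$ to $t+V_{K,k}$). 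What the block-code analysis actually yields is a statement of the shape $A\big(V_p(K,-T+R)\big)\subseteq V_p(K,\delta+R)$, where $R$ is the $k$-free sieve, $T$ is a pattern of the code (a translate on the source side), and $\delta=(\delta_\q)_\q$ is a tuple of \emph{local} translates, one for each prime above $p$, which need not come from a single global element of $\O_K$; so it cannot be normalized away by composing $f$ with a translation. Hence the hypothesis of \Cref{intro_symmetries}(b), which concerns the untranslated sets $V_{K,k,p}$, is exactly what you do not get at this stage, and the sentence ``forces $A$ to restrict to a bijection of $V_{K,k,p}$'' is unjustified. The paper circumvents precisely this obstacle by proving the classification of linear maps for general sieves (\Cref{thm_main}), which allows arbitrary (translated) excluded residue classes on both sides, and applying it to the sieves $-T+R$ and $\delta+S$ in \Cref{dynmor}\ref{dynmora} (via \Cref{dynamical_crt2} to produce suitable admissible configurations). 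Without such a translated version of \Cref{intro_symmetries}, or an argument eliminating the translates (which I do not see), your reduction does not go through.

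A second, smaller point: even after $A=M_\varepsilon\circ\tau$ is known, your final assertion that the untwisted symmetry $g$ must be a pure translation is exactly the computation of $\Sym(\D_{K,k})$, and it is not automatic ``easier combinatorics.'' The paper's argument (part (a) of \Cref{thm_automorphisms}, following \cite{positive-entropy-shifts}) uses \Cref{special_chl}\ref{special_chl_singleton} to obtain a singleton pattern $\{t_1\}$, then \Cref{dynmor}\ref{dynmorq} together with a prime $\p$ satisfying $t_1\nequiv t_2\bmod\p^k$ for all $t_2\in T_2$ to force $t_1\in T$ for every pattern $T$, which gives $g(X)\subseteq -t_1+X$; equality then comes from running the same argument for $g^{-1}$ and plugging in a singleton $X$. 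You should either supply this argument or cite it, rather than deferring it to analogy.
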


We show that no two shift spaces of the form $\D_{K,k}$ are topologically conjugate, and more generally that no one is a factor system of another:

\begin{theorem}\label{thm_kfree_shift_spaces_cong}
\label{intro_conjugate}
Let $K,L$ be number fields and let $k,l\geq2$. Then, the following are equivalent:
\begin{enumerate}[label=(\alph*)]
\item The topological dynamical systems $\D_{K,k}$ and $\D_{L,l}$ are topologically conjugate.
\item The topological dynamical system $\D_{L,l}$ is a factor system of $\D_{K,k}$.
\item We have $K\cong L$ and $k=l$.
\end{enumerate}
\end{theorem}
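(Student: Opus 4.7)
The implications (c) $\Rightarrow$ (a) $\Rightarrow$ (b) are routine: an isomorphism $\tau:K\ra L$ carries prime ideals of $\O_K$ to prime ideals of $\O_L$ of the same norm, hence $V_{K,k}$ to $V_{L,k}$, so when $k=l$ the map $S\mapsto\tau(S)$ is a topological conjugacy between $\D_{K,k}$ and $\D_{L,l}$ equivariant along $\tau:\O_K\ra\O_L$; and every conjugacy is trivially a factor map.

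The substance is (b) $\Rightarrow$ (c). Let $\pi:\X_{K,k}\ra\X_{L,l}$ be a factor map equivariant along a group homomorphism $A:\O_K\ra\O_L$. My plan is to reduce $A$ to the form provided by \Cref{intro_linear_maps} and then use surjectivity of $\pi$ to rule out the degenerate possibilities. First, $A\ne0$: otherwise $\pi$ would be $\O_K$-invariant, hence constant on the dense orbit $\O_K+V_{K,k}$, contradicting surjectivity onto the infinite space $\X_{L,l}$. The essential next step is to deduce the arithmetic local condition $A(V_{K,k,p})\subseteq V_{L,l,p}$ for every rational prime $p$. By continuity of $\pi$, whether $0\in\pi(S)$ depends only on $S$ restricted to some finite window $W\subseteq\O_K$; combined with equivariance, this gives a sliding-block description of $\pi$. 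Feeding in the dense family of configurations $\{t+V_{K,k}:t\in\O_K\}$ and reading off the local admissibility constraints on the image (modulo the $l$-th power of each prime ideal of $\O_L$ above $p$) should force the claimed arithmetic inclusion. \Cref{intro_linear_maps} then yields a field embedding $\tau:K\hookrightarrow L$ and an $\varepsilon\in\O_L$ with $A=M_\varepsilon\circ\tau$.

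To upgrade this to the full conclusion, surjectivity of $\pi$ should force $\tau$ to be an isomorphism: if $[K:\Q]<[L:\Q]$ then $A(\O_K)=\varepsilon\tau(\O_K)$ has $\Z$-rank strictly less than $[L:\Q]$, so $\overline{A(\O_K)+\pi(V_{K,k})}$ is an $A(\O_K)$-invariant closed subset of $\X_{L,l}$ that cannot equal the full $\O_L$-invariant shift (by a rank, resp.\ entropy, argument), contradicting surjectivity of $\pi$. Hence $K\cong L$; identifying $K=L$ via $\tau$ gives $A=M_\varepsilon$, and by the induced bijection on residues mod $p^k$ (as in \Cref{intro_symmetries}) the element $\varepsilon$ must be a unit. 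The equality $k=l$ then follows from two inequalities. On one hand, (b) $\Rightarrow$ (a) of \Cref{intro_linear_maps} gives $\varepsilon V_{K,k}\subseteq V_{K,l}$, which reduces to $V_{K,k}\subseteq V_{K,l}$ since multiplication by a unit preserves $k$-freeness; this forces $k\leq l$. On the other hand, a factor map enforces $h(\X_{K,l})\leq h(\X_{K,k})$ on topological entropies, and the strict monotonicity of this entropy in the parameter gives $l\leq k$. The principal obstacle in this plan is the sliding-block-code step of the second paragraph: the classical Curtis--Hedlund--Lyndon argument handles equivariant self-maps of a single shift, but here the acting groups differ and $A$ need not be injective, so one must choose $W$ carefully and transport local information across $A$ with care.
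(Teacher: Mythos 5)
The weak point is exactly the step you flag, and it is more than an obstacle: the inclusion $A(V_{K,k,p})\subseteq V_{L,l,p}$ is not what the Curtis--Hedlund--Lyndon description of a factor map yields, so \Cref{intro_linear_maps} cannot be invoked at that stage. Two shifts intervene. First, $A(x)\in f(X)$ is governed by the occurrence of a whole pattern at $x$ (i.e.\ $X\cap(x+M)=x+T$ for some $T$ in the pattern family), not by $x\in X$; so feeding in translates of $V_{K,k}$ only gives information about those $x$ around which a pattern $T$ occurs, and the relevant local condition on the source side concerns the translated sets $-T+\p^k$ rather than $\p^k$. Second, the image $f(X)$ of an admissible set is only known to avoid \emph{some} translate $\delta_\q+\q^l$ of each excluded class, not the class $0\bmod\q^l$ itself. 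The honest output of the block-code argument is therefore an inclusion of the form $A(V_p(K,-T+R))\subseteq V_p(L,\delta+S)$ (this is \Cref{claim1} inside the paper's proof of \Cref{dynmor}), and to exploit it one needs the generalization of \Cref{intro_linear_maps} to arbitrary sieves: \Cref{thm_main} (which allows translated excluded classes on both sides), \Cref{dynamical_crt2} to build admissible configurations realizing prescribed residues with the pattern planted around them, and \Cref{special_chl} to guarantee that the pattern family is non-empty, consists of non-empty admissible patterns, and that $A$ has finite kernel. Your target inclusion is true a posteriori (once the theorem is proved one has $A=M_\varepsilon\circ\tau$ with $\varepsilon$ a unit and $k=l$), but assuming it here is circular; the introduction of the paper states explicitly that (b) $\Rightarrow$ (c) relies on this sieve generalization rather than on \Cref{intro_linear_maps} alone.

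The gap then propagates into your endgame: the direction $k\leq l$ rests on $\varepsilon V_{K,k}\subseteq V_{K,l}$, which again presupposes the unproven local inclusion, so only the entropy inequality $l\leq k$ survives. The paper avoids all of this: \Cref{dynmor}\ref{dynmorb} produces a pattern $T$ with $\varepsilon\cdot\tau(-T+\p^k)=\tau(\p)^l$ for all but finitely many $\p$, and comparing Haar measures ($\Nm(\p)^{-l}$ against a quantity in $[\Nm(\p)^{-k},|T|\cdot\Nm(\p)^{-k}]$) forces $k=l$ in one stroke (\Cref{thm_factor_kfree}), with no entropy invariant needed. Finally, the digression proving $\tau$ is an isomorphism by a ``rank, resp.\ entropy'' argument is vaguer than necessary: with the paper's definition a factor map already has $A$ surjective, and the finite-kernel statement \Cref{special_chl}\ref{special_chl_kernel} gives injectivity of $A$ on $\O_K\cong\Z^n$, whence $\tau$ is an isomorphism and $\varepsilon$ a unit as in \Cref{cor_linear_maps}.
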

We show \Cref{thm_kfree_shift_spaces_cong} directly without explicitly making use of any invariants of the dynamical systems.
The implications (c) $\Rightarrow$ (a) $\Rightarrow$ (b) are clear. The proof of (b) $\Rightarrow$ (c) (see \Cref{thm_factor_kfree}) relies on a generalization of \Cref{intro_linear_maps}. 

\subsection{Known results}

\Cref{intro_symmetries,intro_automorphisms} were previously known for quadratic number fields (see \cite[Theorems 3.11, 3.15, and 4.4]{baake-bustos-nickel-power-free-points}) and cyclotomic fields (see \cite[Theorems 5.9 and 6.2]{k-free-cyclotomic}). These results are not sufficient to distinguish in all situations different number fields or distinct exponents like in \Cref{thm_kfree_shift_spaces_cong}.

In the quadratic case some of the shift spaces $\D_{K,k}$ had previously been distinguished by their extended symmetry group.
The quotient of the extended symmetry group modulo translations by elements of $\O_K$ is $\O_K^\times\rtimes\Aut(K)$. This quotient group is finite for imaginary quadratic number fields and infinite for real quadratic number fields. Hence, $\D_{K,k}$ and $\D_{L,l}$ cannot be topologically conjugate when $K$ and $L$ are quadratic number fields of different signature.

In case both unit groups are ~$\{\pm1\}$ this approach does not help.
Another invariant that had been used to distinguish dynamical systems is their topological entropy. The topological entropy of $\D_{K,k}$ equals its patch-counting entropy, which is $\log(2)/\zeta_K(k)$. (See \cite[Theorem~5.1]{baake-bustos-nickel-power-free-points} for the case of quadratic number fields.)

Unfortunately, the patch-counting entropy is not a complete invariant of the topological dynamical systems $\D_{K,k}$ up to topological conjugacy:
$\zeta_K(k)=\zeta_L(l)$ does not necessarily imply that $K\cong L$: there are \emph{arithmetically equivalent} number fields $K\ncong L$, which have exactly the same zeta function $\zeta_K(s)=\zeta_L(s)$ for all $s$. (See \cite{perlis-arithmetic-equivalence}.)

Nevertheless, there are many cases in which the patch-counting entropy suffices to prove that two dynamical systems are not topologically conjugate. Known results are:

\begin{itemize}
\item If $K=L$ and $1\leq k<l$, then clearly $1/\zeta_K(k)<1/\zeta_L(l)$.
\item If $K$ and $L$ are real quadratic number fields and $k,l\geq2$ are even, then $\zeta_K(k)=\zeta_L(l)$ implies $K\cong L$ and $k=l$. (See \cite[Lemma 5.2]{baake-bustos-nickel-power-free-points}.)
\item If $K$ and $L$ are imaginary quadratic number fields and $k,l\geq3$ are odd, then $\zeta_K(k)=\zeta_L(l)$ implies $K\cong L$ and $k=l$, assuming that the numbers $\pi,\zeta(3),\zeta(5),\dots$ are algebraically independent. (See \cite[Corollary 5.4 and the discussion preceding it]{baake-bustos-nickel-power-free-points}.)
\end{itemize}

For other contexts in which the groups of linear maps preserving certain sets have been determined, see for example \cite{li-tsing-linear-preserver-problems,li-pierce-linear-preserver-problems}. Inspired by our work, Seguin \cite{seguin-sympol} recently treated questions analogous to \Cref{intro_symmetries}, replacing the ring of integers $\O_K$ by the polynomial ring over a field.

\subsection{Structure of the paper}

In \Cref{section_sieves}, we define sieves, which generalize the notion of $k$-freeness. This generalization is not only natural from a number-theoretic perspective, but also helpful in the proof of the aforementioned results about shift spaces, especially of (b) $\Rightarrow$ (c) in \Cref{intro_conjugate}.
In \Cref{section_local_global}, we show a local-global principle for $k$-free integers, yielding the implications (a) $\Rightarrow$ (b) in \Cref{intro_symmetries,intro_linear_maps}.
In \Cref{section_thm_main}, we state and prove (a generalization to sieves of) the implications (b) $\Rightarrow$ (c) in \Cref{intro_symmetries,intro_linear_maps}.
In \Cref{section_dynamical_systems}, we define and study shift spaces associated to sieves and prove \Cref{intro_automorphisms,intro_conjugate}. While our results on morphisms between shift spaces associated to $k$-free numbers can be considered satisfactory, some natural questions for other sieves remain open. (See \Cref{section_morphism_open_questions}.)

We have excluded the case $k=1$ above, which would give rise to the set of units, $V_{K,1}=\O_K^\times$. For $k=1$, the local-global principles mentioned above already break down for $K=\Q$. (Not every non-zero residue class modulo a prime is the remainder of a unit.) In \Cref{section_units}, we study this case $k=1$ and show a statement similar to \Cref{intro_symmetries,intro_linear_maps} when $K$ is a totally real number field.

\subsection{Acknowledgements} This work was supported by the Deutsche Forschungsgemeinschaft (DFG, German Research Foundation) – Project-ID 491392403 – TRR 358. The authors are grateful to Francisco Araújo and Michael Baake for helpful discussions and comments on an earlier draft, and to the referee for the careful reading and helpful remarks.

\section{Sieves}\label{section_sieves}

The statements from the introduction can be generalized in various directions. Firstly, instead of a number field $K$, we can allow an arbitrary étale $\Q$-algebra $K$ (i.e., a cartesian product $K_1\times\cdots\times K_r$ of finitely many number fields) without complication. We will use the following notation:

\begin{definition}
Let $K$ be an étale $\Q$-algebra.
We denote its ring of integers by $\O_K$ and the completions of $K$ and $\O_K$ at a prime $\p$ by $K_\p$ and $\O_{K,\p}$, respectively. For any prime number $p$, we let $\O_{K,p}=\O_K\otimes_\Z\Z_p$. By the Chinese remainder theorem, we can identify $\O_{K,p}=\varprojlim_{k\geq0}\O_K/p^{k}\O_K$ with the product $\prod_{\p\mid p\textnormal{ prime of }K}\O_{K,\p}$. We naturally obtain a diagonal embedding $\O_K \hookrightarrow \prod_p \O_{K,p} = \prod_\p \O_{K,\p}$.
\end{definition}

To show results about the dynamical systems $\D_{K,k}$, a further generalization will be helpful.
The set $V_{K,k}$ of $k$-free integers is defined by the congruence conditions $x\nequiv0\mod\p^k$ for the primes $\p$ of $K$. In other words, we exclude the residue class $0\bmod\p^k$. More generally, we can for each prime $\p$ exclude another residue class, vary the exponent in the modulus $\p^{k(\p)}$, or even exclude finitely many arbitrary residue classes modulo some power $\p^{k(\p)}$. The compact open subsets of $\O_{K,\p}$ are exactly the unions of finitely many residue classes modulo a power of $\p$. These are also exactly the closed open subsets of $\O_{K,\p}$. This motivates the following definition:

\begin{definition}
Let $K$ be an étale $\Q$-algebra. We say that a compact open subset of $\O_{K,\p}$ is \emph{defined modulo} an invertible ideal $\a$ of $\O_K$ if it is a union of residue classes modulo $\a\O_{K,\p}$.

A \emph{sieve for $K$} is a collection $R=(R_\p)_\p$ of compact open subsets $R_\p\subseteq\O_{K,\p}$ for all primes $\p$ of $K$.
We let $V(K,R)$ be the set of $x\in\O_K$ with $x\notin R_\p$ for all primes $\p$. For any prime number~$p$, we let $V_p(K,R)$ be the subset $\prod_{\p\mid p}\O_{K,\p}\setminus R_\p$ of $\O_{K,p}=\prod_{\p\mid p}\O_{K,\p}$.
\end{definition}

\begin{remark}
By definition, an element $x$ of $\O_K$ lies in $V(K,R)$ if and only if it lies in $V_p(K,R)$ for all prime numbers $p$.
\end{remark}

\begin{example}
For any integer $k\geq1$, the \emph{$k$-free sieve} given by $R_\p=\p^k$ gives rise to the set $V_{K,k}=V(K,R)$ from the introduction and $V_p(K,R)$ is the union of the residue classes modulo $p^k$ contained in $V_{K,k,p}$.
\end{example}

\begin{example}
More generally, motivated by the notion of $\mathcal B$-free numbers (see for example \cite[paragraph before Definition~5.1]{positive-entropy-shifts} or \cite[Section 1.2]{dynamical-b-free}), we can fix an integer $k(\p)\geq1$ for each prime $\p$ and define $R_\p=\p^{k(\p)}$.
\end{example}

\begin{remark}
Like $V_{K,k}$, the set $V(K,R)$ can also be defined as a cut-and-project set (see \cite{jasjan-keller-lemanczyk-view-through-window} or \cite[Section 7.2]{baake-grimm}) as follows: Consider the locally compact abelian groups $G=K\otimes\R$ and $H = \prod_\p\O_{K,\p}$, the discrete co-compact subgroup $\mathcal L = \{(x,x,\dots) \mid x\in \O_K\}$ of $G\times H$, and the \emph{window} $W = \prod_\p (\O_{K,\p}\setminus R_\p)$. Then, $V(K,R)$ is the image of $(G\times W)\cap\mathcal L$ under the projection to $G$.
\end{remark}

It will often be important that the sieve does not exclude too many residue classes. To this end, we introduce the Haar measure on $\O_{K,\p}$, normalized so that $\meas(\O_{K,\p})=1$. The measure of any residue class modulo $\p^k$ is then $\meas(\p^k) = 1/\Nm(\p)^k$. The measure of a compact open subset $A$ of $\O_{K,\p}$ defined modulo $\p^k$ is the number of residue classes modulo $\p^k$ represented in~$A$ divided by the total number of residue classes modulo $\p^k$.

\begin{figure}[h]
\begin{tikzpicture}[decoration={brace,amplitude=4pt}]
\def\width{2.25}
\def\sep{2}
\def\height{0.35}
\newcommand{\drawbox}[2]{
	\begin{scope}[shift={({(\width+\sep)*#2},0)}]
		\def\prime{#1}
		\def\fr{(\prime*\prime)}
		\def\split{\width/\fr}
		\node[] at ({\width/2},{\height+0.3}) {$\O_{\Q,\prime} = \Z_\prime$};
		\draw[gray,fill=lightgray] (0,0) rectangle ({\split},\height);
		\foreach \j in {1,...,\prime} {
			\foreach \i in {1,...,\prime} {
				\draw ({\width*((\i-1)+(\j-1)*\prime)/\fr}, 0) -- +(0,0.06);
			}
			\draw ({\width*((\j-1)*\prime)/\fr}, 0) -- +(0,0.15);
		}
		\draw[|-|] ({\split},-0.15)
			-- node[below] {\clap{$R_\prime$}}
			(0,-0.15);
		\draw[|-|] (\width,-0.15)
			-- node[below] {$V_{\prime}(\Q,R)$}
			({\split},-0.15);
		\draw (0,0) rectangle (\width,\height);
	\end{scope}
}
\drawbox{2}{0}
\drawbox{3}{1}
\drawbox{5}{2}
\node at ({(\width+\sep)*3},{\height/2}) {$\cdots$};
\end{tikzpicture}
\caption{
The squarefree sieve $R_\p=\p^2$ for $K=\Q$. Each of the boxes represents one of the rings $\Z_p$. The ring $\Z_p$ is subdivided into residue classes modulo $p$, which are each subdivided into residue classes modulo $p^2$. The shaded area represents the excluded residue class $0\bmod p^2$. Each of the Haar measures is proportional to the area.
}
\end{figure}

\begin{figure}[h]
\begin{tikzpicture}[decoration={brace,amplitude=4pt}]
\def\width{2.25}
\def\sep{2}
\def\height{0.35}
\newcommand{\drawbox}[3]{
	\begin{scope}[shift={({(\width+\sep)*#2},0)}]
		\def\prime{#1}
		\def\grays{#3}
		\def\fr{(\prime*\prime)}
		\def\split{\width/\fr}
		\node[] at ({\width/2},{\height+0.3}) {$\O_{\Q,\prime} = \Z_\prime$};
		\foreach \beg / \en in \grays {
			\draw[gray,fill=lightgray] ({\beg*\width/\fr},0) rectangle ({\en*\width/\fr},\height);
		}
		\foreach \j in {1,...,\prime} {
			\foreach \i in {1,...,\prime} {
				\draw ({\width*((\i-1)+(\j-1)*\prime)/\fr}, 0) -- +(0,0.06);
			}
			\draw ({\width*((\j-1)*\prime)/\fr}, 0) -- +(0,0.15);
		}
		\draw (0,0) rectangle (\width,\height);
	\end{scope}
}
\drawbox{2}{0}{1/2}
\drawbox{3}{1}{0/2, 4/5, 6/7}
\drawbox{5}{2}{3/5, 6/8, 15/16, 20/21}
\node at ({(\width+\sep)*3},{\height/2}) {$\cdots$};
\end{tikzpicture}
\caption{
Another sieve for $K=\Q$.
}
\end{figure}

\begin{figure}[h]
\begin{tikzpicture}
\def\width{2.25}
\def\sep{2}
\newcommand{\drawticks}[2]{
	\def\nrticks{#1}
	\def\len{#2}
	\foreach \i in {1,...,\nrticks}{
		\draw ({\i*\width/(\nrticks+1)},0) -- +(0,\len);
		\draw (0,{\i*\width/(\nrticks+1)}) -- +(\len,0);
	};
}
\newcommand{\drawsquare}[3]{
	\def\labe{#1}
	\def\largeticks{#2}
	\def\smallticks{#3}
	\node[] at ({\width/2},{\width+0.3}) {\labe};
	\drawticks{\smallticks}{0.06}
	\drawticks{\largeticks}{0.15}
	\draw (0,0) rectangle (\width,\width);
}
\begin{scope}
	\draw[gray,fill=lightgray] (0,0) rectangle ({\width/4},{\width/4});
	\drawsquare{$\O_{K,2}$ (inert)}{1}{3}
	\node (R2) at ({\width/8},-0.75) {\clap{$R_{(2)}$}};
	\draw[->] (R2) -- ({\width/8},{\width/8});
	\node at ({\width*2/4+0.1},{\width*2/4+0.2}) {$V_2(K,R)$};
\end{scope}
\begin{scope}[shift={({\width+\sep},0)}]
	\fill[lightgray] (0,0) rectangle ({\width/9},{\width});
	\fill[lightgray] (0,0) rectangle ({\width},{\width/9});
	\draw[gray] (0,0) rectangle ({\width/9},{\width});
	\draw[gray] (0,0) rectangle ({\width},{\width/9});
	\drawsquare{$\O_{K,3}$ (split)}{2}{8}
	\draw[|->] (0,-0.25) -- node[below] {$\O_{K,\p_1}$} (\width,-0.25);
	\draw[|->] (-0.25,0) -- node[left] {$\O_{K,\p_2}$} (-0.25,\width);
	\draw[|-|] (0,-0.5) -- node[below] {$R_{\p_1}$} ({\width/9},-0.5);
	\draw[|-|] (-0.5,0) -- node[left] {$R_{\p_2}$} (-0.5,{\width/9});
	\node at ({\width*2/4+0.15},{\width*2/4+0.1}) {$V_3(K,R)$};
\end{scope}
\begin{scope}[shift={({(\width+\sep)*2},0)}]
	\draw[gray,fill=lightgray] (0,0) rectangle ({\width/25},{\width/25});
	\drawsquare{$\O_{K,5}$ (inert)}{4}{24}
	\node (R5) at ({\width/50},-0.75) {$R_{(5)}$};
	\draw[->] (R5) -- ({\width/50},{\width/50});
	\node at ({\width*2/4},{\width*2/4}) {$V_5(K,R)$};
\end{scope}
\node at ({(\width+\sep)*3},{\width/2}) {$\cdots$};
\end{tikzpicture}
\caption{
The squarefree sieve $R_\p=\p^2$ for
$K=\Q(\sqrt{13})$%
. Each of the boxes represents one of the rings $\O_{K,p}$. The prime numbers~$2$ and~$5$ are inert in $K$. The prime number $3$ splits into two primes $\p_1,\p_2$, so that $\O_{K,3}=\O_{K,\p_1}\times\O_{K,\p_2}$.
The shaded area represents the excluded residue classes. Each of the Haar measures is proportional to the area.
Note that the set $V_p(K,R)$ ``has more symmetries'' when $p$ is inert than when $p$ splits.
}
\label{fig_sqfree_sieve_nf}
\end{figure}

\section{Local-global principle}\label{section_local_global}

In this section, we state and prove a local-global principle for $k$-free integers and use it to prove the implications (a) $\Rightarrow$ (b) in \Cref{intro_symmetries,intro_linear_maps}. In \Cref{section_dynamical_systems}, the same local-global principle will be used to show the equivalence of the local and global definitions of $\X_{K,k}$.

We have the following commutative diagram, where the horizontal maps are the diagonal embeddings.
\[
\begin{tikzcd}
\O_K \rar[hook] & \prod_p \O_{K,p} \rar[equal] & \prod_\p\O_{K,\p} \\
V(K,R) \rar[hook] \uar[hook] & \prod_p V_p(K,R) \uar[hook] \rar[equal] & \prod_\p\O_{K,\p}\setminus R_\p \uar[hook]
\end{tikzcd}
\]

\subsection{Statement}

We first recall the following case of the classical strong approximation theorem:

\begin{theorem}
Let $K$ be an étale $\Q$-algebra.
For all finite sets $\Omega$ of primes of $K$, all integers $k\geq0$, and all tuples $x=(x_\p)_\p\in\prod_{\p\in\Omega}\O_{K,\p}$, there is an element $y$ of $\O_K$ satisfying $y \equiv x_\p \mod \p^k$ for all $\p\in\Omega$.
\end{theorem}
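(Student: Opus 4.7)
The plan is to deduce this from the Chinese Remainder Theorem (CRT) after reducing modulo $\prod_{\p \in \Omega}\p^k$, with a preliminary reduction to the case of a single number field.

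First I would split $K$ as a product $K = K_1 \times \cdots \times K_r$ of number fields. Every prime $\p$ of $K$ is a prime of exactly one factor $K_i$, and $\O_K = \O_{K_1} \times \cdots \times \O_{K_r}$. Thus it suffices to solve the approximation problem in each factor separately and concatenate the solutions; so we may assume that $K$ is a number field.

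Next I would observe that, for every prime $\p$ of $K$ and every $k \geq 0$, the natural map $\O_K/\p^k \to \O_{K,\p}/\p^k\O_{K,\p}$ is an isomorphism (this is one of the standard characterisations of the completion). Consequently, each given $x_\p \in \O_{K,\p}$ determines a well-defined residue class $\bar x_\p \in \O_K/\p^k$.

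Now I would apply CRT in $\O_K$: since the ideals $\p^k$ for distinct $\p \in \Omega$ are pairwise coprime, the canonical map
\[
\O_K\Big/\prod_{\p \in \Omega} \p^k \ \longrightarrow\ \prod_{\p \in \Omega} \O_K/\p^k
\]
is an isomorphism. Pulling back the tuple $(\bar x_\p)_{\p \in \Omega}$ along this isomorphism yields a residue class in $\O_K/\prod_{\p \in \Omega}\p^k$, any lift of which to $\O_K$ gives the required element $y$.

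There is no real obstacle here; the only thing that needs care is making sure one is allowed to interpret each $x_\p \in \O_{K,\p}$ as a class modulo $\p^k$ in $\O_K$, which is exactly what the isomorphism $\O_K/\p^k \cong \O_{K,\p}/\p^k\O_{K,\p}$ provides. The argument is entirely standard; I include it only because it is used repeatedly in what follows.
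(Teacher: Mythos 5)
Your proof is correct and follows the same route as the paper, which simply cites the Chinese remainder theorem; you have merely spelled out the standard details (reduction to a number field factor, the identification $\O_K/\p^k \cong \O_{K,\p}/\p^k\O_{K,\p}$, and CRT for the pairwise coprime ideals $\p^k$).
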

\begin{proof}
This follows immediately from the Chinese remainder theorem.
\end{proof}

\begin{remark}
The theorem can also be formulated topologically: Every non-empty open subset $U$ of $\prod_\p\O_{K,\p}$ contains an element of $\O_K$.
\end{remark}

Now, given a sieve $R$ for $K$, replace the condition $y\in\O_K$ in the strong approximation theorem by the stronger condition $y\in V(K,R)$. Note that $V(K,R)$ is defined by infinitely many congruence conditions, so the Chinese remainder theorem by itself is no longer sufficient. We show the following variant of the strong approximation theorem:

\begin{theorem}\label{thm_sqfree_local_global}
Let $R$ be a sieve for $K$. Assume that there is a finite subset $T$ of $\O_K$ such that for all primes $\p$, we have $R_\p\subseteq T+\p^2$ and $R_\p\neq\O_{K,\p}$.
For all finite sets $\Omega$ of primes of $K$, all integers $k\geq0$, and all tuples $x=(x_\p)_\p\in\prod_{\p\in\Omega}\O_{K,\p}\setminus R_\p$, there is an element $y$ of $V(K,R)$ satisfying $y \equiv x_\p \mod \p^k$ for all $\p\in\Omega$.
\end{theorem}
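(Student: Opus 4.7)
The plan is to combine the classical strong approximation theorem stated just above (the Chinese remainder theorem handling finitely many primes) with a sieve-type counting argument that kills the infinitely many remaining ``avoidance'' constraints $y \notin R_\q$. The crucial input is that $R_\p \subseteq T + \p^2$ gives $\meas(R_\p) \leq |T|/\Nm(\p)^2$, so the series $\sum_\p \meas(R_\p)$ converges.

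\textbf{Step 1: reduction to a fixed residue class.} I would first enlarge $\Omega$ to a finite set $\Omega' \supseteq \Omega$ with $|T|\sum_{\p \notin \Omega'} \Nm(\p)^{-2} < 1/2$; this is possible by convergence. For each $\p \in \Omega' \setminus \Omega$, pick some $x'_\p \in \O_{K,\p} \setminus R_\p$ (non-empty since $R_\p \neq \O_{K,\p}$), and set $x'_\p = x_\p$ for $\p \in \Omega$. Choose $k' \geq k$ so large that each $R_\p$ with $\p \in \Omega'$ is defined modulo $\p^{k'}$. Then any $y \in \O_K$ with $y \equiv x'_\p \pmod{\p^{k'}}$ for all $\p \in \Omega'$ automatically satisfies $y \notin R_\p$ for every $\p \in \Omega'$, so it suffices to find such a $y$ that also lies outside $R_\q$ for every $\q \notin \Omega'$.

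\textbf{Step 2: sieving the remaining primes.} Fix an integral basis of $\O_K$ and let $F_N \subseteq \O_K$ be the box of $N^n$ lattice points it defines, with $n = [K:\Q]$. Let $S(N)$ be the subset of $F_N$ satisfying the $\Omega'$-congruences, so $|S(N)| = N^n/M + O(N^{n-1})$ by lattice point counting, where $M = \prod_{\p \in \Omega'} \Nm(\p)^{k'}$. For each $\q \notin \Omega'$, the inclusion $R_\q \subseteq T + \q^2$ gives $|S(N) \cap R_\q| \leq |T| \bigl( N^n/(M \Nm(\q)^2) + E_\q \bigr)$ for an appropriate error $E_\q$. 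Primes $\q$ of sufficiently large norm can only contribute via the finitely many elements $y = t \in T$, since otherwise $y - t$ would be a nonzero element of $F_N - T$ with norm divisible by $\Nm(\q)^2$, which is impossible once $\Nm(\q)$ exceeds the norm-scale of $F_N - T$. For the remaining finitely many small primes, the main-term contribution sums to at most $|T|(N^n/M)\sum_{\q \notin \Omega'} \Nm(\q)^{-2} < (1/2)N^n/M$ by the choice of $\Omega'$. Combining, $|S(N) \cap V(K,R)| > 0$ for $N$ large, and any such $y$ proves the theorem.

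The main obstacle is controlling the lattice point counting error uniformly in $\q$. For sublattices of $\O_K$ arising from ideals, the successive minima are balanced (of order $\Nm(I)^{1/n}$), so the error in counting $\{y \in F_N : y \equiv c \pmod{I\q^2}\}$ decays suitably with $\Nm(\q)$; together with the finiteness of the set of small primes that can contribute, the cumulative error stays $o(N^n/M)$, so the final positivity estimate goes through.
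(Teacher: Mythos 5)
Your proposal is correct and is essentially the paper's own argument: a sieve-type count whose main term comes from the Chinese remainder theorem for the finitely many "small" primes and whose tail is controlled via $R_\q\subseteq T+\q^2$ together with the fact that a nonzero element of $\q^2$ lying in a box of side $N$ forces $\Nm(\q)^2\ll N^n$ (equivalently, the balanced successive minima of ideal lattices), which is exactly the content of \Cref{lemma_sqfree_density} and \Cref{tail_estimate}. The differences are only organizational: the paper first absorbs the congruence conditions into a modified sieve (reducing to $\Omega=\emptyset$) and proves the exact density $\prod_\p(1-\meas(R_\p))>0$ by letting the cutoff tend to infinity after the box size, whereas you enlarge $\Omega$ so the tail main terms sum to less than half and prove mere positivity, cutting off the tail primes at $\Nm(\q)\asymp N^{n/2}$.
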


This theorem was at least implicitly stated in \cite[Proposition~5.2]{positive-entropy-shifts} (in the case of number fields). In our proof in \Cref{section_local_global_proof}, we pay attention to carefully estimate the tail term. A useful application of \Cref{thm_sqfree_local_global} is the following example:

\begin{example}
For $k\geq2$, the $k$-free sieve satisfies the assumptions of \Cref{thm_sqfree_local_global} with $T=\{0\}$. Taking $\Omega$ to be the set of primes~$\p$ of~$K$ dividing~$p$, we conclude that every $k$-free residue class modulo~$p^k$ contains a $k$-free number. Hence, the reduction modulo~$p^k$ map $V_{K,k}\ra V_{K,k,p}$ is surjective.
\end{example}
\begin{example}
On the other hand, consider the ``$1$-free sieve'' given by $R_\p=\p$. For simplicity, let $K=\Q$. The assumption of \Cref{thm_sqfree_local_global} fails as there is no finite subset $T$ of $\Z$ with $p\Z\subseteq T+p^2\Z$ for all prime numbers $p$. The conclusion fails as there is no element $y$ of $V(K,R)=\Z^\times=\{\pm1\}$ with $y \equiv 2 \mod 5$.
\end{example}

\begin{remark}
For $\mathcal B$-free systems, the local-global principle follows from the \emph{Erdős condition}: the convergence of $\sum_{\mathfrak b\in\mathcal B}\frac1{\Nm(\b)}<\infty$. It would be natural to say that a sieve satisfies the Erdős condition if $\sum_\p\meas(R_\p)<\infty$ and $R_\p\neq\O_{K,\p}$ for all primes $\p$. However, for general sieves, this condition does not imply the above local-global principle.

In fact, the following classical example shows that it is crucial that the set $T$ in \Cref{thm_sqfree_local_global} does not depend on the prime $\p$: Consider any enumeration $\p_1,\p_2,\dots$ of the primes of $K$ and any enumeration $a_1,a_2,\dots$ of the integers in $\O_K$. Let $R_{\p_i}=a_i+\p_i^2$. We have $R_\p\neq\O_{K,\p}$ for all primes $\p$. On the other hand, the set $V(K,R)$ is empty: for any $x\in\O_K$, there is some index $i$ with $a_i=x$ and we then have $x\in R_{\p_i}$ and therefore $x\notin V(K,R)$. Note that $\sum_\p\meas(R_\p)=\sum_\p\Nm(\p)^{-2}<\infty$. From the Chinese remainder theorem, one could naively expect the set $V(K,R)$ to have density $\prod_\p(1-\meas(R_\p))=\prod_\p(1-\Nm(\p)^{-2})=\zeta_K(2)^{-1}>0$ in $\O_K$.
\end{remark}

\subsection{Application to \texorpdfstring{$k$}{k}-free integers}

Before showing \Cref{thm_sqfree_local_global}, we show that it proves the implication (a) $\Rightarrow$ (b) in \Cref{intro_symmetries,intro_linear_maps}:

\begin{corollary}\label{cor_local_global_kfree}
Let $K,L$ be étale $\Q$-algebras and let $k\geq2$ and $l\geq1$. Let $A:\O_K\ra\O_L$ be a $\Z$-linear map. Then, the following are equivalent:
\begin{enumerate}[label=(\alph*)]
\item The map $A$ satisfies the global condition $A(V_{K,k})\subseteq V_{L,l}$.
\item The map $A$ satisfies the local condition $A(V_{K,k,p})\subseteq V_{L,l,p}$ for all prime numbers $p$.
\end{enumerate}
\end{corollary}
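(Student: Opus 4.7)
The plan is to treat the two implications separately. The implication (b) $\Rightarrow$ (a) is essentially definitional: by the remark following the definition of $V(K,R)$, an element $x\in\O_K$ lies in $V_{K,k}$ iff its image in $\O_{K,p}$ lies in $V_{K,k,p}$ for every prime number $p$ (and analogously for $L$), so any $\Z$-linear map sending each $V_{K,k,p}$ into $V_{L,l,p}$ automatically sends $V_{K,k}$ into $V_{L,l}$.

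For (a) $\Rightarrow$ (b), I first extend $A$ by tensoring with $\Z_p$ to a $\Z_p$-linear (hence continuous) map $A_p\colon\O_{K,p}\to\O_{L,p}$; this map satisfies $A_p(p^l\O_{K,p})\subseteq p^l\O_{L,p}$. Fix a prime number $p$ and an element $\bar x=(x_\p)_\p$ of $V_{K,k,p}$; the goal is to show $A_p(\bar x)\in V_{L,l,p}$, i.e., that its $\mathfrak q$-component lies outside $\mathfrak q^l\O_{L,\mathfrak q}$ for every prime $\mathfrak q$ of $L$ above $p$. The strategy is to approximate $\bar x$ closely by a global element $y\in V_{K,k}$, apply (a) to $y$, and transport the conclusion back using continuity of $A_p$.

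Concretely, I invoke \Cref{thm_sqfree_local_global} for the $k$-free sieve (whose hypothesis holds with $T=\{0\}$ since $k\geq2$), taking $\Omega=\{\p:\p\mid p\}$ and exponent $N:=l\cdot\max_{\p\mid p}e(\p\mid p)$, to obtain $y\in V_{K,k}$ with $y\equiv x_\p\bmod\p^N$ for every $\p\mid p$. The choice of $N$ guarantees $\p^N\O_{K,\p}\subseteq p^l\O_{K,\p}$, hence $y-\bar x\in p^l\O_{K,p}$. By $\Z_p$-linearity, $A(y)-A_p(\bar x)\in p^l\O_{L,p}$, so in particular the $\mathfrak q$-components of $A(y)$ and $A_p(\bar x)$ agree modulo $\mathfrak q^l$ for every $\mathfrak q\mid p$. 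Hypothesis (a) yields $A(y)\in V_{L,l}$, hence $A(y)_{\mathfrak q}\notin\mathfrak q^l\O_{L,\mathfrak q}$, and the same then holds for $A_p(\bar x)_{\mathfrak q}$, as required.

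The main obstacle is already dispatched by \Cref{thm_sqfree_local_global}: a naive Chinese-remainder lift would produce an element of $\O_K$ with no reason to be $k$-free, so hypothesis (a) could not be invoked on it. The assumption $k\geq2$ enters the argument precisely at this local-global step, consistent with the explicit failure for $k=1$ illustrated earlier (the sieve $R_\p=\p$ over $\Q$).
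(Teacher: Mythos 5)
Your proposal is correct and takes essentially the same route as the paper: (b) $\Rightarrow$ (a) is definitional, and (a) $\Rightarrow$ (b) uses \Cref{thm_sqfree_local_global} (valid since $k\geq2$) to lift a locally $k$-free element of $\O_{K,p}$ to a global element of $V_{K,k}$ and then applies hypothesis (a). The only difference is that you approximate to the higher precision $\p^N$ with $N=l\cdot\max_{\p\mid p}e(\p\mid p)$ instead of just modulo $p^k$, which makes the passage from $A(y)\in V_{L,l}$ to $A(\overline x)\in V_{L,l,p}$ explicit also when $l>k$ --- a precision point the paper's terser proof (which only uses surjectivity of $V_{K,k}\ra V_{K,k,p}$ modulo $p^k$) leaves implicit.
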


\begin{proof}
\begin{description}
\item[(b) $\Rightarrow$ (a)] Let $x \in V_{K,k}$. For any prime number $p$, we have $x \in V_{K,k,p}$ and therefore by assumption $A(x)\in V_{L,l,p}$. This implies $x\in V_{L,l}$.
\item[(a) $\Rightarrow$ (b)] We have seen that \Cref{thm_sqfree_local_global} implies the surjectivity of the map $V_{K,k}\ra V_{K,k,p}$. Let $\overline x\in V_{K,k,p}$ and let $x\in V_{K,k}$ be any preimage. By assumption, $A(x)\in V_{L,l}$, so in particular $A(\overline x)\in V_{L,l,p}$.
\qedhere
\end{description}
\end{proof}

\subsection{Proof}\label{section_local_global_proof}

We now prove the variant of strong approximation claimed above.

\begin{proof}[Proof of \Cref{thm_sqfree_local_global}]
We reduce to the case $\Omega=\emptyset$ as follows:

We need to prove that there is an element $y\in\O_K$ such that $y\notin R_\p$ for all primes~$\p$ and $y\equiv x_\p\mod\p^k$ for the finitely many primes $\p\in\Omega$. This is equivalent to $y\in V(K,R')$ for the sieve $R'=(R'_\p)_\p$ given by
\[
R'_\p =
\begin{cases}
R_\p,&\p\notin\Omega,\\
R_\p\cup(\O_{K,\p}\setminus(x_\p+\p^k)),&\p\in\Omega.
\end{cases}
\]
There is a finite subset $T'$ of $\O_K$ such that for all $\p$, we have $R'_\p\subseteq T'+\p^2$. (For each of the finitely many primes $\p\in\Omega$, add to the set $T$ representatives of all residue classes modulo $\p^2$, so that $T+\p^2=\O_{K,\p}$.)

Moreover, $x_\p\notin R'_\p$ for all primes $\p\notin\Omega$, so in particular $R'_\p\neq\O_{K,\p}$. Applying part \ref{lemma_sqfree_density_infinite} of the following lemma to the sieve $R'$, we obtain an element $y\in V(K,R')$ as claimed.
\end{proof}

\begin{lemma}\label{lemma_sqfree_density}
Let $R$ and $T$ as in \Cref{thm_sqfree_local_global}.
\begin{enumerate}[label=(\alph*)]
\item\label{lemma_sqfree_density_density}
Consider any $\Q$-vector space norm $\|\cdot\|$ on $K$. The \emph{density} of $V(K,R)$ in $\O_K$ is
\[
\lim_{X\ra\infty} \frac{\#\{x\in V(K,R):\|x\|\leq X\}}{\#\{x\in\O_K:\|x\|\leq X\}}
= \prod_{\p\textnormal{ prime of }K} (1-\meas(R_\p)) > 0.
\]
\item\label{lemma_sqfree_density_infinite}
In particular, the set $V(K,R)$ is infinite.
\end{enumerate}
\end{lemma}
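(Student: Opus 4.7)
My plan is to combine inclusion--exclusion over primes of bounded norm (handled by the Chinese remainder theorem) with a tail bound that exploits the containment $R_\p\subseteq T+\p^2$.

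First, I would verify convergence of $\prod_\p(1-\meas(R_\p))$ to a positive real number. Since $R_\p\subseteq T+\p^2$, we have $\meas(R_\p)\leq|T|/\Nm(\p)^2$; combined with the fact that each rational prime $p$ lies below at most $n:=[K:\Q]$ primes of $K$, each of norm at least $p$, this gives $\sum_\p\Nm(\p)^{-2}\leq n\sum_p p^{-2}<\infty$. Hence $\sum_\p\meas(R_\p)$ converges absolutely, and together with $\meas(R_\p)<1$ (from $R_\p\neq\O_{K,\p}$), this forces the infinite product to a strictly positive limit.

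For a truncation parameter $Y>0$, set $V_Y:=\{x\in\O_K:x\notin R_\p\text{ for all }\p\text{ with }\Nm(\p)\leq Y\}$. Since each $R_\p$ is compact open, $V_Y$ is a union of residue classes modulo a fixed ideal $M_Y=\prod_{\Nm(\p)\leq Y}\p^{k(\p)}$, so a standard lattice point count in $\O_K$ combined with the Chinese remainder theorem gives the exact density $\prod_{\Nm(\p)\leq Y}(1-\meas(R_\p))$. Since $V(K,R)\subseteq V_Y$, letting $Y\to\infty$ immediately yields the upper density bound $\prod_\p(1-\meas(R_\p))$.

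The main obstacle is the matching lower bound, which requires controlling the tail count $E_Y(X):=\#\{x\in\O_K:\|x\|\leq X,\,x\in R_\p\text{ for some }\p\text{ with }\Nm(\p)>Y\}$. Using $R_\p\subseteq T+\p^2$, I would bound $\#\{x\in R_\p:\|x\|\leq X\}\leq|T|\cdot\#\{z\in\p^2:\|z\|\leq X+C_T\}$, where $C_T:=\max_{t\in T}\|t\|$. The Minkowski-type inequality $\|z\|\geq c_K\Nm(\p)^{2/n}$ for $0\neq z\in\p^2$ (following from $|\Nm(z)|\geq\Nm(\p)^2$ together with $|\Nm(z)|\leq c'_K\|z\|^n$) shows that for $\Nm(\p)$ beyond a constant times $X^{n/2}$ only $z=0$ contributes, so all such primes together add at most $|T|$ elements to $E_Y(X)$. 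In the moderate range $Y<\Nm(\p)\leq cX^{n/2}$, standard lattice counting yields $\#\{z\in\p^2:\|z\|\leq X+C_T\}\ll_K(1+(X+C_T)/\Nm(\p)^{2/n})^n$; summing the leading term gives $O(X^n\sum_{\Nm(\p)>Y}\Nm(\p)^{-2})=O(X^n\varepsilon(Y))$ with $\varepsilon(Y)\to0$, while the lower-order terms sum to $o(X^n)$ owing to the truncation at $cX^{n/2}$. Consequently $E_Y(X)/X^n\leq C\varepsilon(Y)+o(1)$, and letting first $X\to\infty$ and then $Y\to\infty$ forces the density of $V(K,R)$ to equal the full Euler product. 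Part (b) then follows immediately, since a set of positive density in $\O_K$ must be infinite.
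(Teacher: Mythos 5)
Your argument is correct and follows essentially the same route as the paper: a Chinese-remainder main term over the primes of norm at most $Y$, plus a tail estimate exploiting $R_\p\subseteq T+\p^2$ together with the lower bound $\lambda_1(\p^2)\gg\Nm(\p)^{2/n}$ coming from the norm inequality, and then letting $X\ra\infty$ before $Y\ra\infty$. The only difference is organizational: the paper packages the tail bound as a uniform lemma over all ideals $\a$ with $\Nm(\a)>M$ (using $\sum_i X^i/\lambda_1^i\ll X^n/\lambda_1^n$ when $\lambda_1\leq X$), whereas you sum over primes directly and dispose of the lower-order lattice terms by cutting at $\Nm(\p)\asymp X^{n/2}$ --- the same estimates in slightly different packaging.
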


To prove this lemma, we use a standard sieve theory argument. We will need the following tail estimate:

\begin{lemma}\label{tail_estimate}
Let $K$ be an étale $\Q$-algebra of degree $n$ and let $k\geq2$. For all $X,M\geq1$, we have the following uniform estimate:
\begin{align*}
N'(X,M) &:= \#\{x\in\O_K:\|x\|\leq X\textnormal{ and }\a^k\mid x\textnormal{ for some ideal $\a$ with }\Nm(\a)>M\}\\
&\ll 1 + \frac{X^n}{M^{k-1}}.
\end{align*}
(The error term can depend on $K$, $k$, and the norm $\|\cdot\|$.)
\end{lemma}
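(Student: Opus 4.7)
The plan is to classify each $x$ counted in $N'(X,M)$ by an ideal $\a$ with $\Nm(\a)>M$ and $\a^k\mid x$, bound the count of such $x$ for each $\a$ separately, and sum. Write $B_X:=\{x\in\O_K:\|x\|\leq X\}$. The element $x=0$ contributes $1$. For non-zero $x$, the containment $\a^k\subseteq(x)$ yields $\Nm(\a)^k\leq|\Nm(x)|$, and since $|\Nm(x)|$ is a polynomial of degree $n$ in the $\Z$-coordinates of $x$, we have $|\Nm(x)|\ll\|x\|^n\leq X^n$. Hence only ideals with $\Nm(\a)\ll X^{n/k}$ can contribute a non-zero $x$, and one bounds
\[
N'(X,M)\leq 1+\sum_{M<\Nm(\a)\ll X^{n/k}}\#(\a^k\cap B_X\setminus\{0\}).
\]

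For each individual ideal $\a$, I would establish the uniform estimate $\#(\a^k\cap B_X)\ll X^n/\Nm(\a)^k$ in this range. Viewing $\a^k$ as a lattice of rank $n$ in $K\otimes_\Q\R$, its covolume is $\asymp\Nm(\a)^k$. The claim is that its successive minima $\lambda_1\leq\cdots\leq\lambda_n$ with respect to the unit ball of $\|\cdot\|$ all have order $\Nm(\a)^{k/n}$. The lower bound $\lambda_1\gg\Nm(\a)^{k/n}$ follows from $|\Nm(y)|\geq\Nm(\a)^k$ for any non-zero $y\in\a^k$ combined with $|\Nm(y)|\ll\|y\|^n$; the upper bound $\lambda_n\ll\Nm(\a)^{k/n}$ follows from Minkowski's second theorem, which forces $\lambda_1\cdots\lambda_n\asymp\Nm(\a)^k$. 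Inserting these into the standard estimate $\#(\Lambda\cap B_X)\ll\prod_i(1+X/\lambda_i)$ gives $\#(\a^k\cap B_X)\ll(1+X/\Nm(\a)^{k/n})^n$, which in the range $\Nm(\a)\ll X^{n/k}$ simplifies to $\ll X^n/\Nm(\a)^k$.

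Substituting and applying Landau's theorem that the number of integral ideals of $\O_K$ of norm $\leq T$ is $\ll T$, a dyadic decomposition yields
\[
\sum_{\Nm(\a)>M}\frac{1}{\Nm(\a)^k}\ll\sum_{j\geq 0}\frac{2^{j+1}M}{(2^jM)^k}\ll\frac{1}{M^{k-1}},
\]
where convergence of the geometric series uses $k\geq 2$ (precisely where this hypothesis matters). Combining everything gives the required $N'(X,M)\ll 1+X^n/M^{k-1}$.

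The main obstacle is the uniform lattice-point estimate: one must control the geometry of the ideal lattice $\a^k$ independently of $\a$. The crucial point is that $|\Nm(y)|\ll\|y\|^n$ together with $|\Nm(y)|\geq\Nm(\a)^k$ for non-zero $y\in\a^k$, combined with Minkowski's theorem, forces all successive minima of $\a^k$ to be of the same order $\Nm(\a)^{k/n}$; without such uniformity, the individual bounds would not sum cleanly to the tail estimate.
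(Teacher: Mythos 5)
Your proposal is correct and follows essentially the same route as the paper: bound $N'(X,M)$ by summing over ideals $\a$ with $\Nm(\a)>M$ the number of nonzero lattice points of $\a^k$ in the ball, use $\Nm(\a)^k\leq|\Nm(x)|\ll\|x\|^n$ to control the successive minima (the paper only needs $\lambda_1^n\gg\Nm(\a)^k$, whereas you additionally invoke Minkowski's second theorem to make all minima comparable — a harmless strengthening), and then sum $\Nm(\a)^{-k}$ over ideals of norm $>M$, which you justify via the ideal-counting bound and a dyadic decomposition where the paper leaves this step implicit. The differences are only in bookkeeping, not in the underlying argument.
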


Since $\lim_{M\to\infty}\limsup_{X\to\infty}(1+X^n/M^{k-1})/X^n = \lim_{M\to\infty}1/M^{k-1} = 0$, this lemma shows that the $\mathcal B$-free system of $k$-free numbers has \emph{light tails}.

We first explain how to deduce \Cref{lemma_sqfree_density} from \Cref{tail_estimate}:

\begin{proof}[Proof of \Cref{lemma_sqfree_density}]
\begin{enumerate}[label=(\alph*)]
\item Let
\[
N(X) := \#\{x\in\O_K:\|x\|\leq X\textnormal{ and }x\notin R_\p\textnormal{ for all primes }\p\}.
\]
For any $M>1$, let
\[
N(X,M) := \#\{x\in\O_K:\|x\|\leq X\textnormal{ and }x\notin R_\p\textnormal{ for all primes $\p$ with $\Nm(\p)\leq M$}\}.
\]
Since there are only finitely many primes $\p$ with $\Nm(\p)\leq M$, we can apply the Chinese remainder theorem to estimate the size of $N(X,M)$. We obtain
\begin{align}\label{mainterm}
\lim_{X\ra\infty}\frac{N(X,M)}{\#\{x\in\O_K:\|x\|\leq X\}} = \prod_{\p:\Nm(\p)\leq M} (1-\meas(R_\p)).
\end{align}
Using the assumption that $R_\p\subseteq T+\p^2$, we get
\[
\arraycolsep=0pt
\renewcommand{\arraystretch}{1.3}
\begin{array}{rcl}
0
&\leq& N(X,M) - N(X) \\
&\leq& \#\{x\in\O_K:\|x\|\leq X\textnormal{ and }x\in R_\p\textnormal{ for some $\p$ with $\Nm(\p)> M$}\} \\
&\leq& \#\{x\in\O_K:\|x\|\leq X\textnormal{ and }\p^2\mid (x-t)\textnormal{ for some }t\in T\textnormal{ and $\p$ with $\Nm(\p)> M$}\} \\
&\underset{y=x-t}\leq&\displaystyle \sum_{t\in T} \#\{y\in\O_K:\|y\|\leq X+\|t\|\textnormal{ and }\p^2\mid y\textnormal{ for some $\p$ with $\Nm(\p)> M$}\} \\
&\underset{\textnormal{(Lem.\ \ref{tail_estimate})}}\ll&\displaystyle 1 + \frac{X^n}{M}.
\end{array}
\]
As $\#\{x\in\O_K:\|x\|\leq X\}\asymp X^n$, we conclude that
\[
\limsup_{X\ra\infty} \frac{|N(X,M)-N(X)|}{\#\{x\in\O_K:\|x\|\leq X\}} \ll \frac{1}{M}.
\]
Combining this with (\ref{mainterm}) and letting $M$ go to infinity (after $X$), we get
\[
\lim_{X\ra\infty}\frac{N(X)}{\#\{x\in\O_K:\|x\|\leq X\}} = \prod_{\p} (1-\meas(R_\p))
\]
as claimed.

Since $R_\p$ is a compact open proper subset of $\O_{K,\p}$, we have $\meas(R_\p)<1$ for all primes~$\p$. Moreover, $\meas(R_\p)\leq\meas(T+\p^2)\leq\#T\cdot\Nm(\p)^{-2}$. Since $\sum_\p\Nm(\p)^{-2}<\infty$, we conclude that $\prod_\p(1-\meas(R_\p))>0$.

\item
This follows immediately from (a).
\qedhere
\end{enumerate}
\end{proof}

Only the important task of proving the tail estimate remains:

\begin{proof}[Proof of \Cref{tail_estimate}]
We can assume without loss of generality that $\|\cdot\|$ is the supremum norm given by $\|x\|=\max_{\sigma:K\rightarrow\C}|\sigma(x)|$.

We have
\[
N'(X,M) \leq 1 + \sum_{\a: \Nm(\a)>M} \#\{0\neq x\in\a^k:\|x\|\leq X\}.
\]
The image of $\a^k$ under the Minkowski embedding $K\hookrightarrow\R^r\times\C^s$ is a lattice of rank $n$ with covolume proportional to $\Nm(\a^k)$. Denote the successive minima of this lattice with respect to $\|\cdot\|$ by $\lambda_1(\a^k),\dots,\lambda_n(\a^k)$. If $\lambda_1(\a^k) > X$, then we by definition have
\[
\#\{0\neq x\in\a^k:\|x\|\leq X\} = 0.
\]
For $\lambda_1(\a^k) \leq X$, we obtain the following classical upper bound. (For example, combine \cite[Lemma 2]{schmidt-lattice-counting} with Minkowski's second theorem.)
\begin{align*}
\#\{0\neq x\in\a^k:\|x\|\leq X\}
&\ll \sum_{i=0}^{n} \frac{X^i}{\lambda_1(\a^k)\cdots\lambda_i(\a^k)}
\leq \sum_{i=0}^{n} \frac{X^i}{\lambda_1(\a^k)^i}
\ll \frac{X^n}{\lambda_1(\a^k)^n}.
\end{align*}
Any non-zero element $x$ of $\a^k$ satisfies $\Nm(\a)^k\leq|\Nm_{K|\Q}(x)|=\prod_{\sigma:K\ra\C}|\sigma(x)|\leq\|x\|^n$. Therefore, $\lambda_1(\a^k)^n \geq \Nm(\a)^k$, so we obtain
\[
\#\{0\neq x\in\a^k:\|x\|\leq X\}
\ll \frac{X^n}{\Nm(\a)^k}.
\]
Hence,
\begin{align*}
N'(X,M)
&\ll 1 + \sum_{\a: \Nm(\a)>M}\frac{X^n}{\Nm(\a)^k} \ll 1 + \frac{X^n}{M^{k-1}}.
\qedhere
\end{align*}
\qedhere
\end{proof}

\section{Linear maps with local conditions}\label{section_thm_main}

\subsection{Statement}

Any $\Z$-linear map $A:\O_K\ra\O_L$ gives rise to a $\Z_p$-linear map $A:\O_{K,p}\ra\O_{L,p}$ for any prime number $p$.

The following theorem is the main result of this section:

\begin{theorem}\label{thm_main}
Let $R$ and $S$ be sieves for $K$ and $L$, respectively.
Let $A:\O_K\ra\O_L$ be a $\Z$-linear map such that there are infinitely many prime numbers $p$ satisfying the following conditions:
\begin{enumerate}[label=(\alph*)]
\item\label{thm_main_cond_inc} $A(V_p(K,R))\subseteq V_p(L,S)$.
\item\label{thm_main_cond_ntr} $A(\O_K)\cap S_\q\neq\emptyset$ for all primes $\q\mid p$ of $L$.
\item\label{thm_main_cond_vol} $\sum_{\p\mid p\textnormal{ prime of }K}\meas(R_\p)<1$.
\item\label{thm_main_cond_spl} $p$ splits completely in both $K$ and $L$.
\end{enumerate}
Then, $A$ is the composition $M_\varepsilon\circ\tau$ of a $\Q$-algebra homomorphism $\tau:K\ra L$ and the multiplication by $\varepsilon$ map $M_\varepsilon:\O_L\ra\O_L$ for some $\varepsilon\in\O_L$.
\end{theorem}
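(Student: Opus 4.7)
My plan is to extend $A$ $\bar\Q$-linearly to $\bar A\colon K\otimes_\Q\bar\Q\to L\otimes_\Q\bar\Q$ and, using the $\Q$-algebra embeddings $\tau_1,\dots,\tau_n\colon K\to\bar\Q$ and $\sigma_1,\dots,\sigma_m\colon L\to\bar\Q$, identify both sides with $\bar\Q^n$ and $\bar\Q^m$ to represent $\bar A$ by a matrix $C=(c_{ji})$ with entries in $\bar\Q$. The central goal is to show that each row of $C$ has exactly one nonzero entry; the decomposition $A=M_\varepsilon\circ\tau$ will then drop out of Galois descent.

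Only finitely many rational primes can fail to be \emph{$C$-good}, in the sense that every nonzero $c_{ji}$ is a $p$-adic unit under a fixed embedding $\bar\Q\hookrightarrow\bar\Q_p$. So I pick one prime $p$ from the hypothesized infinite set satisfying (a)--(d) that is also $C$-good. Because $p$ splits completely in both $K$ and $L$, the primitive idempotents give identifications $\O_{K,p}\cong\Z_p^n$ and $\O_{L,p}\cong\Z_p^m$ with coordinates indexed by the primes $\p_i\mid p$ and $\q_j\mid p$; under these identifications the $\Z_p$-linear map $A_p$ is described by $C$ viewed in $\Z_p$, every entry being either $0$ or a unit.

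Now suppose for contradiction that some row $j$ has support $I_j=\{i:c_{ji}\neq 0\}$ with $|I_j|\geq 2$. Writing $U_{\p_i}=\O_{K,\p_i}\setminus R_{\p_i}$, the $\q_j$-projection of $A_p(V_p(K,R))=A_p(\prod_i U_{\p_i})$ is the sumset $\sum_{i\in I_j}c_{ji}U_{\p_i}\subseteq\Z_p$. Multiplication by the units $c_{ji}$ preserves Haar measure, and by condition (c),
\[
\sum_{i\in I_j}\meas(c_{ji}U_{\p_i}) \;=\; |I_j|-\sum_{i\in I_j}\meas(R_{\p_i}) \;>\; |I_j|-1 \;\geq\; 1.
\]
Iterating the two-set pigeonhole bound $\meas(U\cap(y-V))\geq\meas(U)+\meas(V)-1$ then forces $\sum_{i\in I_j}c_{ji}U_{\p_i}=\Z_p$, and since condition (b) guarantees $S_{\q_j}\neq\emptyset$, some $x\in V_p(K,R)$ produces $A_p(x)_{\q_j}\in S_{\q_j}$, contradicting (a). Zero rows are ruled out similarly: a zero row would force $A_p(V_p(K,R))_{\q_j}=\{0\}$, so (a) would give $0\notin S_{\q_j}$ while (b) would give $0\in S_{\q_j}$.

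Thus each row of $C$ has a unique nonzero entry $c_{j,i(j)}=:d_j$. Since $A$ is defined over $\Q$, one checks that $g(c_{ji})=c_{g\cdot j,\,g\cdot i}$ for every $g\in\operatorname{Gal}(\bar\Q/\Q)$, so that both $j\mapsto i(j)$ and the tuple $(d_j)$ are Galois-equivariant. The standard equivalence between étale $\Q$-algebras and finite Galois sets turns the former into a $\Q$-algebra homomorphism $\tau\colon K\to L$ with $\sigma_j\circ\tau=\tau_{i(j)}$ and the latter into an element $\varepsilon\in L$ with $\sigma_j(\varepsilon)=d_j$. A direct verification that $\sigma_j(A(x))=d_j\tau_{i(j)}(x)=\sigma_j(\varepsilon\tau(x))$ for all $j$ and $x$ yields $A=M_\varepsilon\circ\tau$, with $\varepsilon=A(1)\in\O_L$. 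The main obstacle is the sumset step: condition (c) has to be exactly strong enough to force surjectivity onto $\Z_p$, so that a single well-chosen prime already delivers the contradiction.
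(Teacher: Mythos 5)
Your argument is correct, and it is organized genuinely differently from the paper's proof, even though the decisive local step is the same in spirit. The paper first reduces to showing that the extension of $A$ to a splitting field $F$ maps invertibles to invertibles, deduces from the geometric lemma (\Cref{geometric_lemma}) that the representing matrix has exactly one nonzero entry per row, and obtains $A=M_\varepsilon\circ\tau$ via \Cref{lemma_norm_implies_b}; a hypothetical failure of invertibility-preservation is witnessed by some $x\in(K\otimes F)^\times$, transported to a completely split prime at which the coordinates of $x$ are units, and contradicted by the coset-versus-sieve measure pigeonhole of \Cref{lemma_cover} (parametrizing by the line $a+tx$ and using conditions (b) and (c) to land in $S_\q$, contradicting (a)). You instead work with the matrix $C$ over $\bar\Q$ directly: at a completely split prime that is moreover $C$-good, the entries become $0$ or units in $\Z_p$, a row with two or more nonzero entries is excluded by your sumset pigeonhole using (c), (b), (a), zero rows are excluded separately, and then $\tau$ and $\varepsilon$ are recovered by explicit Galois descent on the entries. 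Your route avoids the intermediate invertibility statement, the geometric lemma, and \Cref{lemma_norm_implies_b}, at the cost of discarding the finitely many non-$C$-good primes (the paper only discards primes depending on the witness $x$) and of carrying out the descent by hand via Galois-equivariance and the $G$-set dictionary; both costs are harmless, and your unit-entry observation plays exactly the role that the unit coordinates of $x$ play in \Cref{lemma_cover}.

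Two small points to tighten. First, the phrase ``iterating the two-set pigeonhole bound'' is not literally valid, because $\meas(U+V)$ need not be bounded below by $\meas(U)+\meas(V)$ (take $U=V=p\Z_p$), so a naive induction on the number of summands breaks; but no iteration is needed: by (c), for any two indices $i_1,i_2\in I_j$ one already has $\meas(c_{ji_1}U_{\p_{i_1}})+\meas(c_{ji_2}U_{\p_{i_2}})>1$, hence their sumset is all of $\Z_p$, and the remaining factors (including those outside $I_j$) only need to be nonempty, which (c) also guarantees. Second, in the zero-row case you should note explicitly that $V_p(K,R)\neq\emptyset$ (again a consequence of (c)) before invoking (a) to conclude $0\notin S_{\q_j}$.
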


We first point out that there are always infinitely many prime numbers $p$ satisfying \ref{thm_main_cond_spl}:

\begin{lemma}\label{lemma_infinitely_many_split}
For any étale $\Q$-algebras $K,L$, there are infinitely many prime numbers $p$ splitting completely in $K$ and $L$.
\end{lemma}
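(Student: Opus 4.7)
The plan is to reduce the question to a single number field and then invoke a classical density theorem.

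First I would unfold the definition of an étale $\mathbb Q$-algebra: writing $K=K_1\times\cdots\times K_r$ and $L=L_1\times\cdots\times L_s$ as products of number fields, the ring $\O_{K,p}=\prod_i\O_{K_i,p}$ decomposes as a product of copies of $\Z_p$ precisely when every $K_i$ has $p$ splitting completely, and analogously for $L$. Thus it suffices to produce infinitely many primes that split completely in every one of the finitely many number fields $K_1,\dots,K_r,L_1,\dots,L_s$.

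Next I would pass to a single Galois extension containing all of them. Fixing embeddings into $\overline\Q$, let $M\subseteq\overline\Q$ be the Galois closure over $\Q$ of the compositum $K_1\cdots K_r L_1\cdots L_s$. This is a finite Galois extension of $\Q$, and any prime $p$ that is unramified and splits completely in $M$ automatically splits completely in each subfield $K_i$ and $L_j$ (since a prime splits completely in $F\subseteq M$ iff the Frobenius conjugacy class of $p$ in $\Gal(M/\Q)$ restricts trivially to $F$, which is implied by the Frobenius being trivial on all of $M$).

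Finally, I would invoke a density theorem to see that infinitely many such $p$ exist: by Chebotarev's density theorem, the set of primes splitting completely in $M$ has Dirichlet density $1/[M:\Q]>0$, so it is infinite. (Alternatively, one can use Frobenius's theorem, or even just the classical fact that the Dedekind zeta function $\zeta_M(s)$ has a simple pole at $s=1$, which forces infinitely many primes to split completely.) There is no real obstacle here — the proof is essentially bookkeeping plus a single citation — and I would keep the write-up short.
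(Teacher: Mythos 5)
Your proof is correct and follows essentially the same route as the paper: reduce to the number field factors, pass to a single Galois extension (your Galois closure of the compositum coincides with the paper's compositum of the Galois closures), and conclude via Chebotarev. The paper additionally notes that one can avoid Chebotarev entirely via a purely algebraic argument (citing Poonen), but this is only an alternative reference, not a different structure of proof.
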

\begin{proof}
Write $K=K_1\times\cdots\times K_r$ and $L=L_1\times\cdots\times L_s$ as products of number fields. Let $F$ be the compositum of the Galois closures of $K_1,\dots,K_r,L_1,\dots,L_s$. There are infinitely many prime numbers that split completely in $F$ (and hence in $K$ and in $L$), which can be seen either by the Chebotarev density theorem (see for example \cite[Corollary VII.13.6]{neukirch-algebraic-number-theory}), or by a much simpler purely algebraic argument (see \cite{poonen-infinitely-many-completely-split-primes}).
\end{proof}

\begin{remark}
None of the conditions \ref{thm_main_cond_inc}--\ref{thm_main_cond_spl} can be omitted. In fact, for \emph{all} $\Z$-linear bijections $A:\O_K\ra\O_L$, there are sieves $R$ and $S$ such that there are infinitely many prime numbers $p$ satisfying all of the conditions but one: (When omitting \ref{thm_main_cond_spl}, we assume that there are infinitely many prime numbers $p$ that are inert in $K$ or $L$.)
\begin{enumerate}[label=(\alph*)]
\item
Without \ref{thm_main_cond_inc}, any sieves $R,S$ with $S_\q\neq\emptyset$ work.
\item
Without \ref{thm_main_cond_ntr}, we could take $S_\q=\emptyset$ for all $\q$, so that $V_p(L,S)=\O_{L,p}$.
\item
Without \ref{thm_main_cond_vol}, we could take $R_\p=\O_{K,\p}$ for all $\p$, so that $V_p(K,R)=\emptyset$. Even assuming $R_\p\neq\O_{K,\p}$ is not enough: Taking any $\emptyset\neq S_\q\neq\O_{L,\q}$, the set $A^{-1}(\O_{L,p}\setminus V_p(L,S))\subseteq\O_{K,p}=\prod_{\p\mid p}\O_{K,\p}$ is non-empty and open. Hence, it contains a set of the form $\prod_{\p\mid p} C_\p$ with non-empty compact open sets $C_\p$. The sieve $R_\p=\O_{K,\p}\setminus C_\p$ then satisfies \ref{thm_main_cond_inc}.
\item
Let $S$ be the $k$-free sieve for any $k\geq1$. We can construct a sieve $R$ satisfying \ref{thm_main_cond_inc}--\ref{thm_main_cond_vol} for all but finitely many prime numbers~$p$ that are inert in $K$ or $L$ as follows. (This has to do with the fact that the set $V_p(K,R)$ can have more symmetries when $p$ is inert than when it splits completely. See also \Cref{fig_sqfree_sieve_nf}.)
\begin{itemize}
\item
For every prime number $p$ that is inert in $L$ and for every prime $\p\mid p$ of $K$, define $R_\p=\p^k$. For any $x\in V_p(K,R)$, we have $p^k\nmid x$ and therefore $p^k\nmid A(x)$, so $x\in V_p(L,S)$.
\item
For every prime number $p$ that is inert in $K$ but not in $L$, define $R_p=\bigcup_{\q\mid p}A^{-1}(\q^k)$. By definition, $A(V_p(K,R))\subseteq V_p(L,S)$. The measure $\meas(R_p)\leq\#\{\q\mid p\}\cdot\Nm(\q^k)$ is smaller than $\frac1n$ for all but finitely many prime numbers $p$, implying \ref{thm_main_cond_vol}.
\end{itemize}
It seems plausible, however, that the condition that $p$ split completely in $K$ and $L$ could be weakened, as long as certain other appropriate splitting behaviors occur infinitely often.
\end{enumerate}
\end{remark}

\subsection{Application to \texorpdfstring{$k$}{k}-free integers}

\Cref{thm_main} immediately proves the implication (b) $\Rightarrow$ (c) in \Cref{intro_symmetries,intro_linear_maps}:

\begin{corollary}\label{cor_linear_maps}
Let $K,L$ be étale $\Q$-algebras and let $k,l\geq1$.
\begin{enumerate}[label=(\alph*)]
\item If $A:\O_K\ra\O_L$  is a $\Z$-linear map with $A(V_{K,k,p})\subseteq V_{L,l,p}$ for all primes numbers $p$, then $A$ is the composition $M_\varepsilon\circ\tau$ of a $\Q$-algebra homomorphism $\tau:K\ra L$ and the multiplication by $\varepsilon$ map $M_\varepsilon:\O_L\ra\O_L$ for some $\varepsilon\in\O_L$.
\item If moreover $A$ is bijective, then $\tau$ is an isomorphism and $\varepsilon$ is a unit.
\end{enumerate}
\end{corollary}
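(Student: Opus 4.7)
The plan is to deduce Corollary \ref{cor_linear_maps} directly from Theorem \ref{thm_main} by specializing to the $k$-free sieve $R_\p=\p^k$ on $K$ and the $l$-free sieve $S_\q=\q^l$ on $L$. With these choices $V_p(K,R)=V_{K,k,p}$ and $V_p(L,S)=V_{L,l,p}$, so condition (a) of the theorem is precisely the hypothesis of the corollary.

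It then remains to exhibit infinitely many primes $p$ simultaneously satisfying conditions (a)--(d). Condition (d) is provided by Lemma \ref{lemma_infinitely_many_split}: there are infinitely many primes that split completely in both $K$ and $L$. Condition (b) is automatic, since $A(0)=0$ lies in $A(\O_K)\cap\q^l=A(\O_K)\cap S_\q$ for every prime $\q$ of $L$. For condition (c), if $p$ splits completely in $K$ and $n=\dim_\Q K$, then there are exactly $n$ primes $\p\mid p$, each of norm $p$, so
\[
\sum_{\p\mid p}\meas(R_\p)=\frac{n}{p^k}<1
\]
for all but finitely many $p$. Thus infinitely many primes satisfy all four conditions, and Theorem \ref{thm_main} yields the decomposition $A=M_\varepsilon\circ\tau$ asserted in part (a).

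For part (b), assume $A$ is bijective. Comparing $\Z$-ranks of $\O_K$ and $\O_L$ forces $\dim_\Q K=\dim_\Q L$. By surjectivity of $A$, pick $x\in\O_K$ with $A(x)=1$; then $\varepsilon\tau(x)=1$ in $L$. Because $\tau$ is a $\Q$-algebra homomorphism, it maps integral elements to integral elements, so $\tau(x)\in\O_L$, showing $\varepsilon\in\O_L^\times$ with inverse $\tau(x)$. Consequently $M_\varepsilon$ is a bijection of $\O_L$, and $\tau=M_{\varepsilon^{-1}}\circ A$ restricts to a $\Z$-linear bijection $\O_K\to\O_L$. Extending scalars to $\Q$, this $\Q$-algebra homomorphism becomes a $\Q$-linear bijection and is therefore a $\Q$-algebra isomorphism $K\cong L$.

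All of the substantive work is done by Theorem \ref{thm_main}; the corollary is a formal specialization. The only points requiring care are that the $k$-free sieve satisfies condition (c) precisely because $p$ splits completely in $K$ (so the $n$ contributions $1/p^k$ remain small in total), and that in part (b) the unit property of $\varepsilon$ must be established first before concluding that $\tau$ is an isomorphism.
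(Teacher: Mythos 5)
Your proposal is correct and follows essentially the same route as the paper: both specialize \Cref{thm_main} to the $k$-free and $l$-free sieves, verify conditions (a)--(d) via $0\in S_\q$, the smallness of $\meas(R_\p)$, and \Cref{lemma_infinitely_many_split}, and then deduce (b) by observing that surjectivity of $A=M_\varepsilon\circ\tau$ forces $\varepsilon\in\O_L^\times$ and hence $\tau$ to be an isomorphism.
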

\begin{proof}
\begin{enumerate}[label=(\alph*)]
\item Let $R$ be the $k$-free sieve on $K$ and let $S$ be the $l$-free sieve on $L$. The condition $A(V_{K,k,p})\subseteq V_{L,l,p}$ is equivalent to $A(V_p(K,R))\subseteq V_p(L,S)$. We have $0\in A(\O_K)\cap S_\q$ for all primes $\q$ of $L$. Moreover, $\meas(R_\p)=\Nm(\p)^{-k}$ is less than $\frac1n$ for all but finitely many primes $\p$. Hence, the claim follows from \Cref{thm_main} together with \Cref{lemma_infinitely_many_split}.
\item The image of $A = M_\varepsilon\circ\tau$ is contained in the ideal of $\O_L$ generated by $\varepsilon$. Since $A$ is surjective, $\varepsilon$ must be a unit. Since $A$ is bijective, $\tau$ must then be an isomorphism.
\qedhere
\end{enumerate}
\end{proof}

\subsection{Geometric lemma}

The following statement can be viewed as a geometric analogue of \Cref{intro_linear_maps}. We consider the $F$-algebra $F^n=F\times\cdots\times F$ and denote by $(F^\times)^n$ the set of invertible elements, i.e., the set of vectors in $F^n$ with non-zero coordinates.

\begin{lemma}
\label{geometric_lemma}
Let $n,m\geq0$, let $F$ be a field with at least $3$ elements, and let $A:F^n\ra F^m$ be an $F$-linear bijection. The following are equivalent:
\begin{enumerate}[label=(\alph*)]
\item The map $A$ satisfies $A((F^\times)^n) \subseteq (F^\times)^m$.
\item The map $A$ is the composition $M_\varepsilon\circ\tau$ of an $F$-algebra homomorphism $\tau:F^n\ra F^m$ and the coordinate-wise multiplication by $\varepsilon$ map for some $\varepsilon\in(F^\times)^m$.
\end{enumerate}
\end{lemma}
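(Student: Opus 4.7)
The plan is to prove (a) $\Rightarrow$ (b); the reverse implication is immediate, because any unital $F$-algebra homomorphism $\tau:F^n\to F^m$ sends units to units, and componentwise multiplication by $\varepsilon\in(F^\times)^m$ preserves $(F^\times)^m$.

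For the harder direction, I will expand $A$ in the standard basis $e_1,\dots,e_n$ of $F^n$. Writing $A(e_i)=(v_{i,1},\dots,v_{i,m})$, the $j$th coordinate of $A(x)$ becomes the linear form $L_j(x)=\sum_{i=1}^n v_{i,j}x_i$, whose support I will denote by $I_j:=\{i:v_{i,j}\neq0\}$. The key structural claim to be proved is that $|I_j|=1$ for every $j$; once this is established, the unique element $\sigma(j)\in I_j$ and the coefficient $\varepsilon_j:=v_{\sigma(j),j}\in F^\times$ will read off $\tau$ and $\varepsilon$ directly.

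The main obstacle, and the step where the hypothesis $|F|\geq3$ is essential, is ruling out $|I_j|\geq2$. The strategy is to produce $x\in(F^\times)^n$ with $L_j(x)=0$ whenever $|I_j|\geq2$, contradicting (a). The core auxiliary fact to establish is: for all $k\geq2$, all nonzero $w_1,\dots,w_k\in F$, and all $c\in F$, the equation $\sum_{i=1}^k x_iw_i=c$ admits a solution in $(F^\times)^k$. I will prove this by induction on $k$. For $k=2$, solving $x_2=(c-x_1w_1)/w_2$ requires choosing $x_1\in F^\times\setminus\{c/w_1\}$, which is possible precisely because $|F^\times|\geq2$, i.e.\ $|F|\geq3$. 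For $k\geq3$, one fixes an arbitrary $x_k\in F^\times$ and applies the inductive hypothesis to the remaining $k-1$ variables with right-hand side $c-x_kw_k$. Restricting $L_j$ to its support $I_j$ (with $c=0$) and extending to arbitrary nonzero values at the other coordinates then yields the required $x$. Since surjectivity of $A$ makes each $L_j$ nonzero, one has $|I_j|\geq1$ as well, and therefore $|I_j|=1$.

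The conclusion is then mechanical. Setting $\sigma(j)$ to be the unique element of $I_j$ and $\varepsilon_j:=v_{\sigma(j),j}$, one obtains $A(x)_j=\varepsilon_j x_{\sigma(j)}$. Since $A$ is a linear bijection we already know $n=m$; bijectivity of $A$ further forces $\sigma:\{1,\dots,m\}\to\{1,\dots,n\}$ to be a bijection (injectivity of $A$ yields surjectivity of $\sigma$, and surjectivity of $A$ yields injectivity of $\sigma$). Finally, defining $\tau(x)_j:=x_{\sigma(j)}$ produces an $F$-algebra homomorphism (trivially checked componentwise via $(xy)_{\sigma(j)}=x_{\sigma(j)}y_{\sigma(j)}$) satisfying $A=M_\varepsilon\circ\tau$.
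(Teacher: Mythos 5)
Your proposal is correct and follows essentially the same route as the paper: both show that each coordinate of $A$ involves exactly one input variable, using $\#F\geq 3$ to construct a vector with all coordinates in $F^\times$ that is sent to a vector with a zero coordinate whenever some row has two nonzero entries, and then read off $\tau$ and $\varepsilon$ from the single surviving entries. The only cosmetic difference is that you solve $\sum_i x_i w_i = 0$ over the whole support by induction, whereas the paper fixes all but two coordinates to $1$ and argues directly in the two remaining variables; the $k=2$ base case of your induction is exactly the paper's counting step.
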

\begin{proof}
The implication (b) $\Rightarrow$ (a) is clear. Conversely, assume (a).

Let $(a_{ij})_{i,j}$ be the $m\times n$-matrix representing $A$.
By assumption, the $i$-th coordinate $\sum_j a_{ij}$ of $A(1,\dots,1)$ has to be non-zero for all $i$. In particular, every row of $A$ needs to contain at least one non-zero entry.

Assume a row of this matrix contains two non-zero entries $a_{ij_1}$ and $a_{ij_2}$. Consider the vectors $x=(x_j)_j\in F^n$ with $x_j=1$ for all $j\neq j_1,j_2$. For every choice of $x_{j_1}$, there is exactly one choice of $x_{j_2}$ such that the $i$-th coordinate $\sum_j a_{ij}x_j$ of $A(x)$ is zero, and for every choice of $x_{j_2}$, there is exactly one such choice of $x_{j_1}$. Out of these $\#F$ choices of pairs $(x_{j_1},x_{j_2})$, at most two have one of the coordinates equal to $0$. Since $\#F\geq3$, there is a choice such that the coordinates of $x$ are all non-zero. On the other hand, the $i$-th coordinate of $A(x)$ is zero by construction, contradicting the assumption that $A((F^\times)^n)\subseteq(F^\times)^m$.

Hence, every row of $A$ contains exactly one non-zero entry, say $a_{i\rho(i)}\neq0$ for $\rho(i)\in\{1,\dots,n\}$. Then, $A$ is the composition of the $F$-algebra homomorphism $\tau:F^n\ra F^m$ given by $(x_j)_j\mapsto(x_{\rho(i)})_i$ and the multiplication by $\varepsilon=(a_{i\rho(i)})_i$ map $M_\varepsilon:F^m\ra F^m$.
\end{proof}
\begin{remark}
The lemma can fail for $F=\F_2$: Take the map $A:\F_2^3\ra\F_2^3$ given by the matrix
\[
A =
\mat{
1&0&0\\
0&1&0\\
1&1&1
}.
\]
It sends $(\F_2^\times)^3=\{(1,1,1)\}$ to itself, but isn't of the form $M_\varepsilon\circ\tau$. 
\end{remark}

\subsection{Reduction to the geometric lemma}

Consider any étale $\Q$-algebra $K$ of degree $n$. For any field extension $F$ of $\Q$, we obtain an étale $F$-algebra $K\otimes_\Q F$ of degree $n$ with a natural inclusion $K\hookrightarrow K\otimes_\Q F$. We say that $K$ \emph{splits completely} over $F$ if the $F$-algebra $K\otimes_\Q F$ is isomorphic to $F^n = F\times\cdots\times F$.

\begin{lemma}\label{lemma_norm_implies_b}
Let $K,L$ be étale $\Q$-algebras of degrees $n,m$. Let $F$ be any field extension of $\Q$ over which $K$ and $L$ split completely. Let $A:K\ra L$ be a $\Q$-linear map, which we extend to an $F$-linear map $K\otimes F\ra L\otimes F$. Assume that $A((K\otimes F)^\times)\subseteq(L\otimes F)^\times$.

Then, $A$ is the composition $M_\varepsilon\circ\tau$ of a $\Q$-algebra homomorphism $\tau:K\ra L$ and the multiplication by $\varepsilon$ map $M_\varepsilon:L\ra L$ for some $\varepsilon\in L^\times$.
\end{lemma}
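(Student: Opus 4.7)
The plan is to apply \Cref{geometric_lemma} to the $F$-linear extension $A_F$ and then use the identity $\varepsilon = A(1)$ to descend the resulting decomposition from $F$ back to $\Q$, bypassing the need for any Galois-theoretic argument.

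First, I fix $F$-algebra isomorphisms $K \otimes_\Q F \cong F^n$ and $L \otimes_\Q F \cong F^m$ supplied by the splitting hypothesis. Under these identifications, the element $1 \in K$ corresponds to $(1,\dots,1) \in F^n$ and the assumption becomes $A_F((F^\times)^n) \subseteq (F^\times)^m$. I apply \Cref{geometric_lemma} to $A_F$: although the statement there assumes $A$ is a bijection, the proof in fact uses only condition~(a) and the bound $\#F \geq 3$ (which is automatic since $F \supseteq \Q$) to force each row of the representing matrix to contain exactly one non-zero entry, so the decomposition $A_F = M_\varepsilon \circ \tau_F$ goes through in our possibly non-bijective setting, with $\tau_F : K \otimes_\Q F \to L \otimes_\Q F$ an $F$-algebra homomorphism and $\varepsilon \in (L \otimes_\Q F)^\times$.

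Next, since $\tau_F$ is an algebra homomorphism, $\tau_F(1) = 1$, so evaluating the decomposition at $1 \in K \hookrightarrow K \otimes_\Q F$ yields the key identity $\varepsilon = A_F(1) = A(1)$. In particular $\varepsilon$ already lies in $L$, while at the same time $\varepsilon \in (L \otimes_\Q F)^\times$. Because $L$ is an étale $\Q$-algebra (a product of number fields), an element of $L$ that is not a zero-divisor in $L \otimes_\Q F$ is not a zero-divisor in $L$, and any non-zero-divisor in $L$ is automatically a unit; hence $\varepsilon \in L^\times$.

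Finally, define $\tau := M_{\varepsilon^{-1}} \circ A : K \to L$. This is $\Q$-linear, and its $F$-linear base change is $M_{\varepsilon^{-1}} \circ A_F = \tau_F$. Restricting the multiplicativity equation $\tau_F(xy) = \tau_F(x)\tau_F(y)$ from $K \otimes_\Q F$ to the subring $K$ shows that $\tau$ is a $\Q$-algebra homomorphism, and $A = M_\varepsilon \circ \tau$ as claimed. The main obstacle I foresee is the mild justification that \Cref{geometric_lemma} applies even without bijectivity; once the $F$-linear decomposition is in place, the identity $\varepsilon = A(1)$ makes the descent to $\Q$ essentially free, so no Galois-cohomological input is needed.
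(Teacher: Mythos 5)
Your proof is correct and follows essentially the same route as the paper: both reduce to \Cref{geometric_lemma} over $F$ and then descend to $\Q$ via the observation that $\varepsilon=A(1)$ lies in $L$ and is a unit there; the paper merely normalizes first (replacing $A$ by $M_{A(1)}^{-1}\circ A$ so that $\varepsilon=1$) instead of factoring out $\varepsilon=A(1)$ afterwards. Your explicit remark that the (a) $\Rightarrow$ (b) direction of \Cref{geometric_lemma} never uses bijectivity is a point the paper leaves implicit (it invokes the lemma for a not-necessarily-bijective map), so it is a welcome clarification rather than a deviation.
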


\begin{remark}
The assumption that $L$ splits completely over $F$ is crucial. If $L\otimes F$ is a field, for example, then \emph{every} injective $\Q$-linear map $A:K\ra L$ satisfies $A((K\otimes F)^\times)\subseteq(L\otimes F)^\times$.

The assumption that $K$ splits completely over $F$ could be omitted and is only included in the statement for the sake of simplicity.
\end{remark}

\begin{proof}
By assumption, $A(1)\in L^\times$. Replacing $A$ by $M_{A(1)}^{-1}\circ A$, we can assume that $A(1)=1$.

Via the isomorphisms $K\otimes F\cong F^n$ and $L\otimes F\cong F^m$, we can interpret the $F$-linear map $A:K\otimes F\ra L\otimes F$ as an $F$-linear map $A':F^n\ra F^m$. By \Cref{geometric_lemma}, we have $A'=M_\varepsilon\circ\tau$ for an $F$-algebra homomorphism $\tau:F^n\ra F^m$ and some $\varepsilon\in(F^\times)^m$. Since $A(1)=1$, we must have $\varepsilon=1$, so $A'=\tau$, which is a ring homomorphism. The corresponding map $A:K\otimes F\ra L\otimes F$ is then also a ring homomorphism. Therefore, its $\Q$-linear restriction $A:K\ra L$ is a $\Q$-algebra homomorphism.
\end{proof}

\subsection{Local lemma}

The following statement will be useful for handling sieves where (some) $R_\p$ contain a non-zero residue class.

\begin{lemma}\label{lemma_cover}
Let $M$ be a $\Z_p$-submodule of $\Z_p^n$ and let $R_1,\dots,R_n\subseteq\Z_p$ be compact open sets with $\sum_{i=1}^n\meas(R_i)<1$. If $M\cap(\Z_p^\times)^n\neq\emptyset$, then $(a+M)\cap\prod_{i=1}^n(\Z_p\setminus R_i)\neq\emptyset$ for all $a\in\Z_p^n$.
\end{lemma}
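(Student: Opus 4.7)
The plan is to use Haar measure on $M$ together with a union bound. First, since $\Z_p$ is Noetherian, the submodule $M\subseteq\Z_p^n$ is finitely generated, hence the continuous image of some $\Z_p^r$, and therefore closed (and thus compact) in $\Z_p^n$. So $M$ is a compact abelian topological group and carries a (unique) normalized Haar measure $\mu$.

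The hypothesis that $M\cap(\Z_p^\times)^n\neq\emptyset$ gives some $v=(u_1,\dots,u_n)\in M$ with each $u_i\in\Z_p^\times$. For each $i$, the coordinate projection $\pi_i\colon M\ra\Z_p$ is a continuous $\Z_p$-module homomorphism whose image contains $\Z_p\cdot u_i=\Z_p$, hence is surjective. Therefore the pushforward $(\pi_i)_*\mu$ is a translation-invariant probability measure on $\Z_p$, and by the uniqueness of Haar measure on $\Z_p$ it must equal the normalized Haar measure $\meas$. In particular, for any compact open $B\subseteq\Z_p$, we have $\mu(\pi_i^{-1}(B))=\meas(B)$.

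Now translate the problem: writing $a=(a_1,\dots,a_n)$ and setting $R_i'=R_i-a_i$, a point $a+m\in a+M$ lies in $\prod_i(\Z_p\setminus R_i)$ if and only if $m\in M$ satisfies $\pi_i(m)\notin R_i'$ for all $i$. Since $\meas(R_i')=\meas(R_i)$, the union bound gives
\[
\mu\!\left(\bigcup_{i=1}^n \pi_i^{-1}(R_i')\right)
\leq \sum_{i=1}^n \mu(\pi_i^{-1}(R_i'))
= \sum_{i=1}^n \meas(R_i) < 1 = \mu(M).
\]
Hence the complement in $M$ is non-empty, yielding the desired $m$.

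There is no real obstacle here; the only subtlety to get right is verifying that each $\pi_i$ is surjective (so the pushforward is honest Haar measure on $\Z_p$, not merely supported on a proper subgroup), which is exactly what the hypothesis $M\cap(\Z_p^\times)^n\neq\emptyset$ provides.
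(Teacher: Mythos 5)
Your proof is correct, and it is a close cousin of the paper's argument rather than an identical one. The paper puts no measure on $M$ at all: it picks $x\in M\cap(\Z_p^\times)^n$ and restricts to the one-parameter line $f(t)=a+t\cdot x$, $t\in\Z_p$, inside $a+M$; since each coordinate map $t\mapsto a_i+t x_i$ is a measure-preserving bijection of $\Z_p$ (because $x_i$ is a unit), the union bound $\sum_{i}\meas((\pi_i\circ f)^{-1}(R_i))=\sum_i\meas(R_i)<1$ is carried out directly on $\Z_p$ and produces a suitable $t$, with $f(t)$ the desired point. You instead equip all of $M$ with its normalized Haar measure and push it forward under the coordinate projections; this is why you must first check that $M$ is closed (hence compact) via finite generation over the Noetherian ring $\Z_p$, and then invoke surjectivity of $\pi_i|_M$ plus uniqueness of Haar measure on $\Z_p$ to identify each pushforward with $\meas$ --- all correct, but exactly the overhead that the paper's restriction to a line avoids. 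The essential mechanism is the same in both proofs: the unit-coordinate vector supplied by $M\cap(\Z_p^\times)^n\neq\emptyset$ makes each coordinate equidistributed on the chosen measure space, and a union bound against $\sum_i\meas(R_i)<1$ finishes. Your version is marginally more intrinsic (it uses all of $M$, not just a line through one point), while the paper's is more elementary, needing nothing beyond the Haar measure on $\Z_p$ itself.
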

\begin{proof}
Let $x$ lie in $M\cap(\Z_p^\times)^n$. Now, the set $a+M$ contains the image of the map $f:\Z_p\ra\Z_p^n$ sending $t$ to $a+t\cdot x$. The composition $\pi_i\circ f:\Z_p\ra\Z_p$ with the $i$-th coordinate projection $\pi_i:\Z_p^n\ra\Z_p$ sends any $t$ to $a_i+t\cdot x_i$. The assumption $x_i\in\Z_p^\times$ means that this map $\pi_i\circ f$ is bijective and measure-preserving. (It simply permutes the residue classes modulo any $\p^k$.) Hence, $\sum_{i=1}^n\meas((\pi_i\circ f)^{-1}(R_i))=\sum_{i=1}^n\meas(R_i)<1=\meas(\Z_p)$, so there is an element $t\in\Z_p$ such that $\pi_i\circ f(t)\notin R_i$ for $i=1,\dots,n$. Then, $f(t)$ lies in $(a+M)\cap\prod_{i=1}^n(\Z_p\setminus R_i)$.
\end{proof}
\begin{figure}[h]
\begin{tikzpicture}[scale=3,decoration=brace]
\newcommand{\drawrone}[3]{
	\def\a{#1}
	\def\b{#2}
	\begin{scope}[on background layer]
		\fill[lightgray] (#1,0) rectangle (#2,1);
	\end{scope}
	\draw[gray] (#1,0) rectangle (#2,1);
	\draw[|-|] (#1,-0.05) -- node[inner sep=1] (T) {} (#2,-0.05);
	\draw[->] (R1) to[out=#3,in=-90] (T);
}
\newcommand{\drawrtwo}[3]{
	\begin{scope}[on background layer]
		\fill[lightgray] (0,#1) rectangle (1,#2);
	\end{scope}
	\draw[gray] (0,#1) rectangle (1,#2);
	\draw[|-|] (-0.05,#1) -- node[inner sep=1] (T) {} (-0.05,#2);
	\draw[->] (R2) to[out=#3,in=180] (T);
}
\node (R1) at (0.5,-0.25) {$R_1$};
\node (R2) at (-0.3,0.5) {$R_2$};
\drawrone{0}{0.2}{180}
\drawrone{0.4}{0.55}{100}
\drawrone{0.9}{1}{0}
\drawrtwo{0.25}{0.4}{-45}
\drawrtwo{0.5}{0.8}{45}
\draw (0,0) rectangle (1,1);
\draw[thick] (0,0.2) -- (1,0.7);
\draw[thick] (0,0.7) -- (0.6,1);
\draw[thick] (0.6,0) -- (1,0.2);
\node (aM) at (1.7,0.5) {$\im(f)\subseteq a+M$};
\draw[->] (aM) to[out=180,in=-20] (0.8,0.57);
\draw[->] (aM) to[out=185,in=60] (0.8,0.13);
\draw[->] (aM) to[out=175,in=-50] (0.5,0.93);
\end{tikzpicture}
\caption{Illustration for \Cref{lemma_cover}. The projection of $\im(f)\subseteq a+M$ onto either of the two coordinates is surjective.}
\end{figure}

\subsection{Proof}

We now show the main statement of this section, \Cref{thm_main}:

\begin{proof}[Proof of \Cref{thm_main}]
Let $n$ be the degree of the $\Q$-algebra $K$. Let $F$ be the smallest number field over which $K$ and $L$ split completely. (Writing $K=K_1\times\cdots\times K_r$ and $L=L_1\times\cdots\times L_s$ as products of number fields, $F$ is the compositum of the Galois closures of $K_1,\dots,K_r,L_1,\dots,L_s$.) Every prime number that splits completely in $K$ and $L$ also splits completely in $F$.

We extend $A$ to an $F$-linear map $A:K\otimes F\ra L\otimes F$. If $A((K\otimes F)^\times)\subseteq(L\otimes F)^\times$, then the claim follows from \Cref{lemma_norm_implies_b}. Assume that there is some $x\in(K\otimes F)^\times\cong(F^\times)^n$ with $A(x)\notin(L\otimes F)^\times$.

The coordinates $x_1,\dots,x_n\in F^\times$ of $x$ can have non-zero $\mathfrak P$-adic valuation for only finitely many primes $\mathfrak P$ of $F$. Discarding those, by the assumption of \Cref{thm_main}, there is still a prime number $p$ such that:
\begin{enumerate}[label=(\alph*)]
\item $A(V_p(K,R))\subseteq V_p(L,S)$.
\item\label{main_pf_cond_ntr} $A(\O_K)\cap S_\q\neq\emptyset$ for all primes $\q\mid p$ of $L$.
\item\label{main_pf_cond_vol}  $\sum_{\p\mid p\textnormal{ prime of }K}\meas(R_\p)<1$.
\item $p$ splits completely in both $K$ and $L$ (and therefore in $F$).
\item The coordinates $x_1,\dots,x_n$ have $\mathfrak P$-adic valuation zero for some prime $\mathfrak P\mid p$ of $F$.
\end{enumerate}

Since $p$ splits completely in $F$, we have $F_{\mathfrak P}\cong\Q_p$ and $\O_{F,\mathfrak P}\cong\Z_p$.
Now, we embed $x\in K\otimes F\cong F^n$ into $K\otimes F_{\mathfrak P}\cong K\otimes\Q_p\cong\prod_{\p\mid p}K_\p\cong\Q_p^n$. Since the coordinates have valuation zero, the resulting point in fact lies in the unit group of the ring of integers: $x \in (\O_K\otimes_\Z\O_{F,\mathfrak P})^\times\cong(\O_K\otimes_\Z\Z_p)^\times\cong\prod_{\p\mid p}\O_{K,\p}^\times\cong(\Z_p^\times)^n$.

On the other hand, $A(x)$ is not invertible in the finite-dimensional $F$-algebra $L\otimes F$. It is therefore a zero divisor, which of course stays a zero divisor in $L\otimes F_{\mathfrak P}\cong L\otimes\Q_p\cong\prod_{\q\mid p}L_\q$.
Hence, the projection $\pi_\q(A(x))$ of $A(x)\in\prod_{\q\mid p}L_\q$ onto at least one factor $L_\q$ must be zero. In summary: $x$ lies in the intersection of $(\O_K\otimes\Z_p)^\times\cong(\Z_p^\times)^n$ and the kernel $M\subseteq\O_K\otimes\Z_p\cong\Z_p^n$ of the map $\pi_\q\circ A:\O_K\otimes\Z_p\ra\O_{L,\q}$.

By \ref{main_pf_cond_ntr}, we can pick some $a\in\O_K\otimes\Z_p\cong\Z_p^n$ such that $b:=\pi_\q(A(a))\in S_\q$. Since $\sum_{\p\mid p}\meas(R_\p)<1$ by \ref{main_pf_cond_vol} and the intersection of $(\O_K\otimes\Z_p)^\times\cong(\Z_p^\times)^n$ and $M\subseteq\O_K\otimes\Z_p\cong\Z_p^n$ is non-empty, it follows by \Cref{lemma_cover} that $a+M = (\pi_\q\circ A)^{-1}(b)$ intersects $V_p(K,R)=\prod_{\p\mid p}\O_{K,\p}\setminus R_\p\subseteq\prod_{\p\mid p}\O_{K,\p}\cong\O_K\otimes\Z_p$. Let $y$ lie in the intersection. Then, $y\in V_p(K,R)$ but $\pi_\q\circ A(y)=b\in S_\q$ and therefore $A(y)\notin V_p(L,S)$. This contradicts the assumption that $A(V_p(K,R))\subseteq V_p(L,S)$.
\end{proof}

\section{Dynamical systems}\label{section_dynamical_systems}

\subsection{Basic definitions}

We will work in the category of topological dynamical systems, which we define as follows.

A \emph{topological dynamical system} $(\X,G)$ consists of a group $G$ and a compact topological space~$\X$ together with a (left) action of $G$ on $\X$ by homeomorphisms. We write the group operation and the action additively in this paper.

A \emph{morphism} (also called \emph{intertwiner}) $(f,A):(\X,G)\ra(\Y,H)$ between topological dynamical systems consists of a group homomorphism $A:G\ra H$ and a continuous map $f:\X\ra \Y$ such that $f(g+X)=A(g)+f(X)$ for all $g\in G$ and $X\in \X$:
\[
\begin{tikzcd}
\X \rar{g} \dar[swap]{f} & \X \dar{f} \\
\Y \rar[swap]{A(g)} & \Y
\end{tikzcd}
\]

A \emph{factor map} is a morphism $(f,A)$ for which both $A$ and $f$ are surjective. If there is a factor map $(f,A):(\X,G)\twoheadrightarrow(\Y,H)$, we say that $(\Y,H)$ is a \emph{factor system} of $(\X,G)$.

A morphism $(f,A)$ in the category of topological dynamical systems is an isomorphism if and only if $A$ is a group isomorphism and $f$ is a homeomorphism. Isomorphic topological dynamical systems are also called \emph{topologically conjugate}.

We call the group of automorphisms $(f,A):(\X,G)\stackrel\sim\ra(\X,G)$ of a topological dynamical system $(\X,G)$ its \emph{extended symmetry group} and denote it by $\ExSym(\X,G)$. (Extended symmetry groups were introduced and motivated in \cite{baake-roberts-yassawi}.)

This is not to be confused with its \emph{symmetry group} $\Sym(\X,G)$, which is the group of homeomorphisms $f:\X\ra \X$ such that $f(g+X)=g+f(X)$ for all $g\in G$ and $X\in \X$. (Many authors in symbolic dynamics of shift spaces call this the ``automorphism group''; see for example \cite[Chapter~3]{kitchens-symbolic-dynamics}.)
Note that $\Sym(\X,G)$ is the kernel of the group homomorphism
\begin{equation}\label{forgetful_map}
\ExSym(\X,G) \ra \Aut_{\textnormal{group}}(G),\quad (f,A) \mapsto A,
\end{equation}

\begin{remark}
\begin{enumerate}[label=(\alph*)]
\item
If $G$ acts faithfully on $\X$, then the extended symmetry group is the normalizer of $G$ in the homeomorphism group of~$\X$.
\item
The symmetry group is the centralizer of $G$ in the homeomorphism group of $\X$.
\end{enumerate}
\end{remark}

\begin{remark}
The symmetry group always contains the translation maps $f(X)=g+X$ by elements $g$ of the center of $G$. In general, there can be additional symmetries.
\end{remark}

\subsection{Shift spaces}

For any group $G$, endow the family $\P(G)$ of subsets of $G$ with the product topology under the natural identification of $\P(G)$ with $\{0,1\}^G$. The group $G$ acts continuously by translation on $\P(G)$.

For the purposes of this paper, a \emph{shift space} is a topological dynamical system $(\X,G)$ obtained from a closed translation-invariant subset $\X$ of $\P(G)$ with the translation action of $G$. The shift space is called \emph{hereditary} if for all $X \subseteq X' \in \X$, we have $X\in\X$. (See \cite{lind-marcus-symbolic-dynamics} or \cite{kitchens-symbolic-dynamics} for an introduction to one-dimensional shift spaces. Most of our systems are higher-dimensional, where it is
well known that important and difficult new phenomena
occur; see \cite{schmidt-algebraic-dynamical-systems} for background.)

The Curtis--Hedlund--Lyndon theorem allows us to concretely describe morphisms between shift spaces as block codes:

\begin{theorem}[{see \cite{chl}, \cite[Theorem~6.2.9]{lind-marcus-symbolic-dynamics} for the one-dimensional case}]\label{general_chl}
Let $(\X,G)$ and $(\Y,H)$ be shift spaces and let $(f,A):(\X,G)\ra(\Y,H)$ be a morphism. Then, there is a finite \emph{window} $M\subseteq G$ and a family $\T$ of \emph{patterns} $T\subseteq M$ such that for all $x\in G$ and $X\in\X$, we have $A(x)\in f(X)$ if and only if $X\cap(x+M)=x+T$ for some pattern $T\in\T$.
\end{theorem}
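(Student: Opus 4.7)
The plan is to extract the window and the family of patterns from the single "test question'' of whether the identity element $0\in H$ belongs to $f(X)$, and then to transfer this local description to every point $x\in G$ using the equivariance $f(g+X)=A(g)+f(X)$.

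First I would define $\phi:\X\ra\{0,1\}$ by $\phi(X)=1$ if $0\in f(X)$ and $\phi(X)=0$ otherwise. Under the identification $\P(H)=\{0,1\}^H$, evaluating membership of $0$ is just a coordinate projection, which is continuous; composing with $f$ shows that $\phi$ is continuous. Hence both $\phi^{-1}(1)$ and $\phi^{-1}(0)$ are clopen subsets of the compact space $\X\subseteq\{0,1\}^G$.

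The heart of the argument is then the standard observation that a continuous $\{0,1\}$-valued function on a closed subset of $\{0,1\}^G$ depends on only finitely many coordinates. Concretely, cover $\phi^{-1}(1)$ by basic cylinder sets of the form $C_{F,T}=\{X\in\P(G):X\cap F=T\}$ with $F\subseteq G$ finite and $T\subseteq F$; by compactness of $\X$, finitely many $C_{F_1,T_1},\dots,C_{F_N,T_N}$ suffice. Let $M:=F_1\cup\cdots\cup F_N$, a finite subset of $G$. Since each condition $X\cap F_i=T_i$ depends only on $X\cap M$, whether $\phi(X)=1$ is determined by $X\cap M$. Defining
\[
\T:=\{T\subseteq M:\phi(X)=1\textnormal{ for every }X\in\X\textnormal{ with }X\cap M=T\},
\]
we obtain the equivalence $0\in f(X)\iff X\cap M\in\T$ for all $X\in\X$.

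Finally I would use equivariance to handle arbitrary $x\in G$. From $f((-x)+X)=A(-x)+f(X)=-A(x)+f(X)$ we see
\[
A(x)\in f(X)\iff 0\in f((-x)+X)\iff((-x)+X)\cap M\in\T,
\]
and translating by $x$ turns the last condition into $X\cap(x+M)=x+T$ for some $T\in\T$, which is the claim. I expect the main (and really the only) obstacle to be the compactness–cylinder argument in the middle paragraph; everything else is either definitions or a one-line consequence of equivariance.
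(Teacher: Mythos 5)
Your proposal is correct and follows essentially the same route as the paper: continuity of the membership-of-$0$ test plus compactness of $\X\subseteq\{0,1\}^G$ yields a finite window $M$ on which the test only depends, and the equivariance $f(-x+X)=-A(x)+f(X)$ transfers this to all $x\in G$; your middle paragraph merely spells out the cylinder-cover argument that the paper compresses into ``by the definition of the product topology.'' The only point to state carefully is that the finitely many cylinders must be chosen (as one may, since $\phi^{-1}(1)$ is open in $\X$) with $C_{F_i,T_i}\cap\X\subseteq\phi^{-1}(1)$, as otherwise the claim that $\phi(X)$ is determined by $X\cap M$ does not follow.
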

\begin{proof}
Since $\P(G)\cong\{0,1\}^G$ is compact, so are the closed open subfamilies
\[
\{X\in\X:0\in f(X)\}\quad\textnormal{and}\quad\{X\in\X:0\notin f(X)\}.
\]
By the definition of the product topology, there is then a finite set $M\subseteq G$ such that for all $X\in\X$, we can determine whether $0$ lies in the subset $f(X)$ of $H$ just from the intersection $X\cap M\subseteq M$. More precisely, there is a family $\T$ of subsets of $M$ such that for all $X\in\X$, we have $0\in f(X)$ if and only if the set $X\cap M$ belongs to $\T$.

By the intertwining property, $f(-x+X)=-A(x)+f(X)$ for all $x\in G$. It follows that $A(x)\in f(X)$ is equivalent to $0\in f(-x+X)$, which is equivalent to $(-x+X)\cap M\in\T$, which is equivalent to $X\cap (x+M)=x+T$ for some $T\in\T$.
\end{proof}

With a few (rather weak) assumptions on the shift spaces and the morphism, we can show the following basic properties, which except for \ref{special_chl_kernel} are implicit for example in \cite{positive-entropy-shifts}:

\begin{lemma}\label{special_chl}
Let $(\X,G)$ and $(\Y,H)$ be shift spaces and let $(f,A):(\X,G)\ra(\Y,H)$ be a morphism. Assume that $\X$ is hereditary, that $H\notin\Y$ (if $\Y$ is hereditary, this is equivalent to $\Y\neq\P(H)$, i.e., to $\Y$ being not the full shift space), that $A$ is surjective, and that the image of $f$ contains a non-empty set. Then:
\begin{enumerate}[label=(\alph*)]
\item\label{special_chl_block}
There is a finite window $M\subseteq G$ and a \emph{non-empty} family $\T$ of \emph{non-empty} patterns $T\subseteq M$ \emph{contained in $\X$} such that for all $x\in G$ and $X\in\X$, we have $A(x)\in f(X)$ if and only if $X\cap(x+M)=x+T$ for some pattern $T\in\T$.
\item\label{special_chl_kernel}
The group homomorphism $A:G\ra H$ has a finite kernel.
\item\label{special_chl_singleton}
If $f:\X\ra\Y$ is injective, then any family $\T$ as in (a) contains a singleton set.
\end{enumerate}
\end{lemma}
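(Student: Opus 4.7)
The plan is to start from the general Curtis--Hedlund--Lyndon theorem (\Cref{general_chl}), which yields some finite window $M\subseteq G$ and family $\T$ of patterns, and then replace $\T$ by the minimal (``tight'') family
\[
\T_0 = \{X\cap M : X\in\X,\ 0\in f(X)\},
\]
so that every $T\in\T_0$ is witnessed by some $X\in\X$. Using the intertwining property together with surjectivity of $A$ (to translate ``$A(x)\in f(X)$'' into ``$0\in f(-x+X)$''), one checks that $\T_0$ still characterizes the morphism. From here, (a) splits into three observations. Non-emptiness of $\T_0$ follows from choosing $X\in\X$ with $f(X)\neq\emptyset$, using surjectivity of $A$ to write some $y\in f(X)$ as $y=A(x)$, and passing to $-x+X\in\X$. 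Every $T\in\T_0$ sits inside some $X\in\X$, so $T\in\X$ by heredity. The crucial step is to rule out $\emptyset\in\T_0$: since $\X$ is non-empty and hereditary, $\emptyset\in\X$; since $\emptyset$ is a $G$-fixed point of $\P(G)$, the intertwining $f(\emptyset)=f(g+\emptyset)=A(g)+f(\emptyset)$ together with surjectivity of $A$ forces $f(\emptyset)\subseteq H$ to be invariant under all of $H$, so $f(\emptyset)\in\{\emptyset,H\}$; but $H\notin\Y$, hence $f(\emptyset)=\emptyset$, giving $0\notin f(\emptyset)$ and therefore $\emptyset\notin\T_0$.

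For (b), pick any non-empty $T\in\T_0$ and any $x_0\in G$. The translate $X := x_0+T$ lies in $\X$ (translation invariance) and satisfies $X\cap(x_0+M)=x_0+T$ since $T\subseteq M$, so $A(x_0)\in f(X)$. For any $k\in\ker(A)$ we also have $A(x_0+k)=A(x_0)\in f(X)$, so the block-code characterization gives $X\cap(x_0+k+M)=(x_0+k)+T'$ for some $T'\in\T_0$. Unpacking, $T'=(T-k)\cap M$, and since $\emptyset\notin\T_0$ we must have $(T-k)\cap M\neq\emptyset$, forcing $k\in T-M$. As $T$ and $M$ are both finite, $\ker(A)\subseteq T-M$ is finite.

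For (c), I would argue by contradiction: suppose no singleton lies in $\T_0$. For any $g\in G$, fix any non-empty $T\in\T_0$ and $t\in T$; then $\{g\}\subseteq(g-t)+T\in\X$, so $\{g\}\in\X$ by heredity. For any $x\in G$ the intersection $\{g\}\cap(x+M)$ is either $\{g\}$ or $\emptyset$, so the block-code condition $A(x)\in f(\{g\})$ would require either a singleton (ruled out by assumption) or $\emptyset$ (ruled out by (a)) to lie in $\T_0$. Hence $A(x)\notin f(\{g\})$ for all $x$; surjectivity of $A$ then gives $f(\{g\})=\emptyset=f(\emptyset)$, contradicting injectivity of $f$.

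The main obstacle is (a), specifically the step forbidding $\emptyset\in\T_0$: it requires combining heredity (to put $\emptyset\in\X$), the $G$-invariance of $\emptyset$ (to make $f(\emptyset)$ an $H$-invariant subset), surjectivity of $A$, and the hypothesis $H\notin\Y$. Once these are in place, (b) and (c) reduce to direct manipulations of the block-code characterization.
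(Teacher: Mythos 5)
Your proof is correct, and for parts (a) and (c) it follows essentially the same route as the paper: Curtis--Hedlund--Lyndon (\Cref{general_chl}), heredity to place the patterns (and $\emptyset$) in $\X$, surjectivity of $A$ plus $H\notin\Y$ to exclude the empty pattern, and for (c) the observation that a singleton configuration would have to be coded by a singleton or empty pattern. The differences are minor but worth noting. First, you work with the tight family $\T_0=\{X\cap M: X\in\X,\ 0\in f(X)\}$ rather than pruning the CHL family, and you exclude $\emptyset\in\T_0$ via the $G$-fixed-point argument ($f(\emptyset)$ is $H$-invariant, hence $\emptyset$ or $H$), whereas the paper shows directly from the block code that $\emptyset\in\T$ would force $f(\emptyset)=H$; these use the same ingredients. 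Second, your part (b) is genuinely a different (and arguably cleaner) argument: from the single configuration $x_0+T$ you extract the explicit containment $\ker(A)\subseteq T-M$, while the paper argues by contradiction, choosing two preimages of a point of $f(X)$ whose windows $x_1+M$, $x_2+M$ are disjoint and restricting $X$ to one window. Two small points: surjectivity of $A$ is not needed to translate $A(x)\in f(X)$ into $0\in f(-x+X)$ (only the intertwining property is), and part (c) asserts the conclusion for \emph{any} family $\T$ as in (a), not just $\T_0$; your argument uses nothing specific to $\T_0$ (only non-emptiness, absence of the empty pattern, $\T\subseteq\X$, and $f(\emptyset)=\emptyset$), so it applies verbatim, but it should be phrased that way.
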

\begin{proof}
\begin{enumerate}[label=(\alph*)]
\item
Let $M$ and $\T$ as in \Cref{general_chl}. Since $\X$ is translation-invariant and hereditary, the condition $X\cap(x+M)=x+T$ with $X\in\X$ can only hold for $T\in\X$. Hence, we can simply remove from $\T$ all subsets not in $\X$ without affecting the validity of \Cref{general_chl}.

As the image of $f$ contains a non-empty set, $\X$ must be non-empty. Since $\X$ is hereditary, we must in fact have $\emptyset\in\X$.

If the family $\T$ was empty, then we would have $f(X)=\emptyset$ for all $X\in\X$, contradicting the assumption that the image of $f$ contains a non-empty set: For any $y\in H$, we can choose a preimage $x\in G$ under the map $A$. Then, $y\in f(X)$ is equivalent to $X\cap(x+M)=x+T$ for some pattern $T\in\T$, which is impossible if $\T$ is empty.

Similarly, if the family $\T$ contained the empty set, then we would have $f(\emptyset)=H$, which is impossible as $H\notin\Y$.

\item
Let $X\in\X$ with $f(X)\neq\emptyset$. Pick any $y\in f(X)$. Assume that the kernel of $f$ is infinite. We can then pick preimages $x_1,x_2\in G$ of $y$ under the map $A$ such that $x_1+M$ is disjoint from $x_2+M$.
Let $X' = X\cap(x_1+M)$. On the one hand, $X\cap(x_1+M)=X'\cap(x_1+M)$. Since $y=A(x_1)\in f(X)$, we see from \ref{special_chl_block} that $y=A(x_1)\in f(X')$. On the other hand, $X'\cap(x_2+M)=\emptyset$. Since $\emptyset\notin\T$, we obtain from \ref{special_chl_block} the contradiction $y=A(x_2)\notin f(X')$.
\item
By \ref{special_chl_block}, the subfamily $\T$ of $\X$ contains a non-empty set. Since the family $\X$ is hereditary, $\X$ must therefore contain a singleton set $\{x\}$. If $\T$ did not contain a singleton set, since it also does not contain the empty set, \ref{special_chl_block} would imply that $f(\emptyset)=\emptyset=f(\{x\})$, contradicting the injectivity of $f$.
\qedhere
\end{enumerate}
\end{proof}

\subsection{Admissible sets}

Any sieve $R$ for an étale $\Q$-algebra $K$ gives rise to a shift space in the following way:

\begin{definition}
We let $\X(K,R)$ be the family of subsets $X$ of $\O_K$ that are disjoint from some \emph{translate} $\delta_\p+R_\p$ of $R_\p$ for each prime $\p$ (with $\delta_\p\in\O_{K,\p}$). The elements of $\X(K,R)$ are called the \emph{admissible} subsets of $\O_K$ for $R$.

The family $\X(K,R)$ is a closed translation-invariant subset of $\P(\O_K)$, so we obtain a shift space $(\X(K,R),\O_K)$, which we denote by $\D(K,R)$ and call the \emph{admissible shift space associated to $R$}. (The claim that $\X(K,R)$ is closed follows from the fact that there are only finitely many distinct translates of the compact open set $R_\p$.)
\end{definition}

\begin{example}
If $R$ is the $k$-free sieve, then $\X(K,R)$ is the family of subsets of $\O_K$ that are disjoint from at least one residue class modulo $\p^k$ for each prime $\p$. This matches the local definition of $\X_{K,k}$ given in \Cref{introduction} (for $k\geq2$). We obtain the shift space $\D_{K,k}=\D(K,R)$ as in \Cref{introduction} and extend this definition to $k=1$.
\end{example}

\begin{remark}~
\begin{enumerate}[label=(\alph*)]
\item The space $\X(K,R)$ is hereditary, i.e., all subsets of admissible sets are admissible.
\item If $R_\p\neq\O_{K,\p}$ for all primes $\p$, then $\{x\}\in\X(K,R)$ for all $x\in\O_K$.
\item If $R_\p\neq\emptyset$ for some prime $\p$, then $\O_K\notin\X(K,R)$.
\item A subset $T$ of $\O_K$ is admissible if and only if $-T+R_\p\neq\O_{K,\p}$ for all primes $\p$, where we use the notation $-T+R_\p = \{-t+r:t\in T,r\in R_\p\}$.
\item If $T_1\subseteq T_2\subseteq\dots$ are admissible subsets of $\O_K$, then so is their union $\bigcup_{i\geq1} T_i$. In particular, by Zorn's lemma, there is a maximal admissible subset of $\O_K$ (with respect to inclusion).
\end{enumerate}
\end{remark}

We will focus on sieves satisfying the following two properties:

\begin{definition}
A sieve $R$ is \emph{non-large}\footnote{The term \emph{non-large} is inspired by the \emph{large sieve} from analytic number theory. (See \cite[Chapter 12]{serre-lectures-mordell-weil}.)} if $R_\p\neq\O_{K,\p}$ for all primes $\p$ and if for every $\varepsilon>0$, there are only finitely many primes $\p$ with $\meas(R_\p)>\varepsilon$.

A sieve $R$ is \emph{cofinite} if there are only finitely many primes $\p$ with $R_\p=\emptyset$.
\end{definition}

\begin{example}
For any $k\geq1$, the $k$-free sieve is non-large and cofinite as there are only finitely many primes with bounded norm.
\end{example}

The following consequence of the Chinese remainder theorem allows us to construct finite admissible subsets of $\O_K$ that intersect ``many'' residue classes and whose elements are arbitrarily far apart. (On first reading, we suggest taking $T_1=\cdots=T_r=\{0\}$.)

\begin{lemma}\label{dynamical_crt2}
Let $R$ be a non-large sieve for $K$. Let $M$ be a finite subset of $\O_K$, let $T_1,\dots,T_r\in\X(K,R)$ be finite (admissible) subsets of $\O_K$, and let $U_1,\dots,U_r$ be open subsets of $\prod_\p\O_{K,\p}$ such that $U_i$ intersects $\prod_\p\O_{K,\p}\setminus(-T_i+R_\p)$ for all $i=1,\dots,r$. Then, there are elements $x_1,\dots,x_r$ of $\O_K$ such that:
\begin{enumerate}[label=(\alph*)]
\item\label{dyncrt2a} For $i=1,\dots,r$, the image of $x_i$ in $\prod_\p\O_{K,\p}$ lies in $U_i$.
\item\label{dyncrt2b} The set $X:=\bigcup_{i=1}^r(x_i+T_i)$ is admissible: $X \in \X(K,R)$.
\item\label{dyncrt2c} The sets $x_1+M,\dots,x_r+M$ are pairwise disjoint.
\end{enumerate}
\end{lemma}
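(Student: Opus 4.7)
The plan is to reduce the construction to a Chinese Remainder Theorem / strong approximation problem at a finite set of ``dangerous'' primes, while handling the cofinitely many remaining primes by a crude measure union bound that is automatic from the non-largeness of $R$.

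First, I would set $s := \max_i |T_i|$ and pick $\varepsilon>0$ with $rs\varepsilon<1$. Let $\Omega := \{\p : \meas(R_\p) > \varepsilon\}$; this is finite because $R$ is non-large. For any choice of $x_i\in\O_K$ and any $\p\notin\Omega$, the union bound $\meas(\bigcup_i(-x_i-T_i+R_\p)) \leq \sum_i |T_i|\meas(R_\p) \leq rs\varepsilon < 1$ shows that $\bigcup_i(-x_i-T_i+R_\p)$ is a proper subset of $\O_{K,\p}$, so admissibility of $X$ at $\p$ is automatic. All the real work therefore concerns $\p\in\Omega$ together with the primes used to define the open sets $U_i$.

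Next, I would fix for each $i$ a witness $y^{(i)}\in U_i$ with $y^{(i)}_\p \notin -T_i+R_\p$ at \emph{every} prime $\p$ (which is exactly the hypothesis, with my reading $\prod_\p\O_{K,\p}\setminus(-T_i+R_\p) = \prod_\p(\O_{K,\p}\setminus(-T_i+R_\p))$). By openness in the product topology, $U_i$ contains a basic cylinder neighborhood of $y^{(i)}$ prescribed by congruences at a finite set $\Sigma_i$ of primes. After enlarging $\Sigma_i$ to include $\Omega$ and raising moduli so that the prescribed congruence at each $\p\in\Omega$ has depth at least $m_\p$ (where $R_\p$ is defined modulo $\p^{m_\p}$), strong approximation produces $z_i\in\O_K$ satisfying all these congruences simultaneously. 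Taking $\a$ to be a sufficiently deep power-product supported on $\bigcup_i\Sigma_i$, any element of $C_i := z_i+\a$ satisfies the same congruences as $z_i$. Every $x_i\in C_i$ therefore has image in $U_i$ (giving (a)), and $x_i\equiv y^{(i)}_\p\pmod{\p^{m_\p}}$ at each $\p\in\Omega$ forces $0\notin -x_i-T_i+R_\p$; combining over $i$ gives the common witness $\delta_\p=0$ certifying admissibility at $\p\in\Omega$. Together with the tail bound above, this proves (b) for every choice $x_i\in C_i$.

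Finally, condition (c) is enforced by picking $x_i\in C_i$ one at a time: the set of forbidden values $\bigcup_{j<i}(x_j+M-M)\cap C_i$ is finite while $C_i$ is infinite, so a valid choice always exists. The only genuine obstacle is the bookkeeping needed to ensure the congruence moduli at each prime in $\Sigma_i\cup\Omega$ are simultaneously deep enough to sit inside the neighborhood of $y^{(i)}$ defining $U_i$ \emph{and} to pin down the residue class of $x_i+T_i$ modulo $\p^{m_\p}$ at the primes of $\Omega$; beyond that, the argument is a clean combination of strong approximation and the measure union bound enabled by non-largeness.
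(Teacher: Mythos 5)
Your proposal is correct and follows essentially the same route as the paper's proof: shrink each $U_i$ to a congruence (cylinder) condition at a finite set of primes enlarged to contain the primes where $\sum_i|T_i|\meas(R_\p)$ could be $\geq1$, use the Chinese remainder theorem to get infinitely many admissible choices of $x_i$ meeting those congruences (so admissibility at the exceptional primes holds with the translate $\delta_\p=0$ and at the remaining primes by the measure union bound enabled by non-largeness), and then choose the $x_i$ greedily to make the sets $x_i+M$ pairwise disjoint. The only differences are cosmetic (per-$i$ finite prime sets $\Sigma_i$ and explicit coset $C_i=z_i+\a$ versus the paper's single $\Omega$ and ``arbitrarily far apart'' choice).
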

\begin{remark}
If $U_i$ intersects $\prod_\p\O_{K,\p}\setminus(-T_i+R_\p)$, we must in particular have $-T_i+R_\p\neq\O_{K,\p}$ for all primes $\p$. This is equivalent to the admissibility of $T_i$.
\end{remark}
\begin{proof}
Shrinking the sets $U_1,\dots,U_r$, we can assume without loss of generality that they are of the form
\[
U_i=\prod_{\p\in\Omega}C_{i,\p}\times\prod_{\p\notin\Omega}\O_{K,\p}
\]
with a finite set $\Omega$ of primes and with non-empty (compact) open subsets $C_{i,\p}$ of $\O_{K,\p}\setminus(-T_i+R_\p)$ for $\p\in\Omega$. Shrinking the sets further, we can moreover assume that $\Omega$ contains the (finitely many!)\ primes $\p$ with $\sum_{i=1}^r|T_i|\cdot\meas(R_\p)\geq1$.

By the Chinese remainder theorem, for each $i$, there are infinitely many $x_i\in\O_K$ with $x_i \in C_{i,\p}$ for all $\p\in\Omega$.
We can therefore pick $x_1,\dots,x_r$ satisfying these conditions and arbitrarily far apart, so that the sets $x_1+M,\dots,x_r+M$ are pairwise disjoint. By construction, the image of $x_i$ in $\prod_\p\O_{K,\p}$ lies in $U_i$.

It remains to check that the set $X=\bigcup_{i=1}^r(x_i+T_i)$ is admissible, i.e., that it is disjoint from some translate of $R_\p$ for all $\p$.
\begin{itemize}
\item
For $\p\in\Omega$, we have $X\cap R_\p=\emptyset$: As $x_i$ lies in the set $C_{i,\p}$, which is disjoint from $-T_i+R_\p$, we have $(x_i+T_i)\cap R_\p=\emptyset$.
\item
For $\p\notin\Omega$, since $\sum_{i=1}^r|T_i|\cdot\meas(R_\p)<1$, the $\sum_{i=1}^r|T_i|$ translates $-x_i-t+R_\p$ of $R_\p$ for $i=1,\dots,r$ and $t\in T_i$ cannot cover $\O_{K,\p}$, so we can pick some $\delta_\p\in\O_{K,\p}\setminus\bigcup_{i=1}^r(-x_i-T_i+R_\p)$. Then, $X\cap(-\delta_\p+R_\p)=\emptyset$.
\qedhere
\end{itemize}
\end{proof}

As a consequence, we obtain the following lemma. (This lemma will not be used later. However, the proof seems like a good exercise on how to apply \Cref{dynamical_crt2}, preparing us for slightly more involved applications in \Cref{dynmor}.)

\begin{lemma}\label{lemma_admissible_inclusions}
Let $R$ and $S$ be non-large sieves for $K$.
\begin{enumerate}[label=(\alph*)]
\item We have $\X(K,R)\subseteq\X(K,S)$ if and only if for each prime $\p$, the set $S_\p$ is a subset of some translate $\delta_\p+R_\p$ of $R_\p$.
\item We have $\X(K,R)=\X(K,S)$ if and only if for each prime $\p$, the set $S_\p$ is some translate $\delta_\p+R_\p$ of $R_\p$.
\end{enumerate}
\end{lemma}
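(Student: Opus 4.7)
My plan is to prove part (a) in full; part (b) will then follow by a short symmetry argument. For (b), applying (a) with the roles of $R$ and $S$ exchanged yields both $S_\p\subseteq\delta_\p+R_\p$ and $R_\p\subseteq\delta'_\p+S_\p$ for every prime $\p$, hence $\meas(R_\p)=\meas(S_\p)=\meas(\delta_\p+R_\p)$; since $S_\p$ is a compact open subset of the compact open set $\delta_\p+R_\p$ with the same Haar measure, the two must coincide.

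The easy direction of (a) is straightforward: if $S_\p\subseteq\delta_\p+R_\p$ and $X\in\X(K,R)$ is disjoint from $\eta_\p+R_\p$ for each $\p$, then $X$ is also disjoint from $(\eta_\p-\delta_\p)+S_\p$, so $X\in\X(K,S)$.

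For the nontrivial direction of (a), I fix a prime $\p_0$ and choose $N\geq1$ large enough that both $R_{\p_0}$ and $S_{\p_0}$ are defined modulo $\p_0^N$. Write $\bar R$ and $\bar S$ for their images in $\O_K/\p_0^N$. Non-largeness of $R$ ensures $R_{\p_0}\neq\O_{K,\p_0}$, so the complement $\bar R^c:=(\O_K/\p_0^N)\setminus\bar R$ is non-empty. My goal is to produce a set $X\in\X(K,R)$ whose image in $\O_K/\p_0^N$ equals $\bar R^c$. Granting such an $X$, the hypothesis $\X(K,R)\subseteq\X(K,S)$ provides a translate $\eta+S_{\p_0}$ of $S_{\p_0}$ disjoint from $X$; reducing modulo $\p_0^N$ (using that $\eta+S_{\p_0}$ is a union of residue classes mod $\p_0^N$) gives $\bar R^c\cap(\bar\eta+\bar S)=\emptyset$, equivalently $\bar S\subseteq-\bar\eta+\bar R$. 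Lifting any representative $\delta_{\p_0}\in\O_{K,\p_0}$ of $-\bar\eta$ yields the desired $S_{\p_0}\subseteq\delta_{\p_0}+R_{\p_0}$.

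The main obstacle is the construction of $X$, which is exactly the task \Cref{dynamical_crt2} is built for. I apply it with $T_i=\{0\}$ for each $i$ (admissible since $R_\p\neq\O_{K,\p}$ by non-largeness), indexed by an enumeration $r_1,\dots,r_s$ of $\bar R^c$, and with $U_i=(r_i+\p_0^N\O_{K,\p_0})\times\prod_{\p\neq\p_0}\O_{K,\p}$. Each $U_i$ is open, and I check it meets $\prod_\p(\O_{K,\p}\setminus R_\p)$: at the prime $\p_0$, the entire residue class $r_i+\p_0^N\O_{K,\p_0}$ lies in $\O_{K,\p_0}\setminus R_{\p_0}$ because $R_{\p_0}$ is defined mod $\p_0^N$ and $r_i\notin\bar R$; at any prime $\p\neq\p_0$, non-largeness of $R$ gives $\O_{K,\p}\setminus R_\p\neq\emptyset$. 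The lemma then produces $x_1,\dots,x_s\in\O_K$ with $x_i\equiv r_i\pmod{\p_0^N}$ and $X:=\{x_1,\dots,x_s\}$ admissible for $R$; by construction the image of $X$ modulo $\p_0^N$ is exactly $\{r_1,\dots,r_s\}=\bar R^c$, completing the argument.
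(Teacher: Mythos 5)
Your proof is correct and follows essentially the same route as the paper: the hard direction of (a) is obtained by applying \Cref{dynamical_crt2} with $T_i=\{0\}$ and product neighborhoods $U_i$ selecting representatives of every residue class in the complement of $R_{\p_0}$, then exploiting that the translate of $S_{\p_0}$ is a union of classes modulo $\p_0^N$. The only cosmetic difference is in (b), where you compare Haar measures of $S_\p$ and $\delta_\p+R_\p$ while the paper counts residue classes modulo $\p^k$ — the same argument in different clothing.
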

\begin{proof}
\begin{enumerate}[label=(\alph*)]
\item ``$\Leftarrow$'' is clear. To prove ``$\Rightarrow$'', assume $\X(K,R)\subseteq\X(K,S)$. Let $R_\p$ and $S_\p$ be defined modulo $\p^k$. Let $C_1,\dots,C_r\subseteq\O_{K,\p}$ be the residue classes modulo $\p^k$ that are disjoint from $R_\p$, so that $\O_{K,\p}\setminus R_\p=C_1\cup\cdots\cup C_r$. Apply \Cref{dynamical_crt2} to the sieve $R$, the sets $M=\emptyset$ and $T_1=\cdots=T_r=\{0\}$, and the open subsets $U_1,\dots,U_r$ of $\prod_{\p'}\O_{K,\p'}$ with $U_i=C_i\times\prod_{\p'\neq\p}\O_{K,\p'}$. We obtain a set $X\in\X(K,R)\subseteq\X(K,S)$ containing representatives $x_1,\dots,x_r$ of the residue classes $C_1,\dots,C_r$. Since $X$ lies in $\X(K,S)$, it must be disjoint from some translate $-\delta_\p+S_\p$ of $S_\p$. As $S_\p$ is defined modulo $\p^k$, the entire residue classes $C_1,\dots,C_r$ are disjoint from $-\delta_\p+S_\p$, so $\O_{K,\p}\setminus R_\p$ is disjoint from $-\delta_\p+S_\p$. This implies that $S_\p$ is a subset of $\delta_\p+R_\p$.
\item ``$\Leftarrow$'' is clear. To prove ``$\Rightarrow$'', we apply (a) to both inclusions. We learn that $S_\p\subseteq\delta_\p+R_\p\subseteq\delta_\p+\delta'_\p+S_\p$ for some $\delta_\p,\delta'_\p\in\O_{K,\p}$. Since $S_\p$ and its translate $\delta_\p+\delta'_\p+S_\p$ consist of the same number of residue classes modulo $\p^k$, we must have equality.
\qedhere
\end{enumerate}
\end{proof}

\subsection{Criteria for the existence of morphisms}\label{section_shift_morphisms}

In this section, we study morphisms between the admissible shift spaces defined in the previous section. The main result is the following theorem, which gives necessary conditions for the existence of a morphism / factor map / isomorphism $(f,A):\D(K,R)\ra\D(L,S)$.

\begin{theorem}\label{dynmor}
Let $R$ and $S$ be non-large cofinite sieves for $K$ and $L$, respectively.
Let $(f,A):\D(K,R)\ra\D(L,S)$ be a morphism. Assume that $A$ is surjective and that the image of $f$ contains a non-empty set. Let $M$ and $\T$ as in \Cref{special_chl}\ref{special_chl_block}. Then:
\begin{enumerate}[label=(\alph*)]
\item\label{dynmora}
The group homomorphism $A$ is the composition of a $\Q$-algebra isomorphism $\tau:K\ra L$ and the multiplication by $\varepsilon$ map $M_\varepsilon$ for some unit $\varepsilon\in\O_L^\times$.

In particular, $K$ is isomorphic to $L$ and $A$ is bijective.

Note that this implies that the bijection $A:\O_K\ra\O_L$ extends to a bijection $A=M_\varepsilon\circ\tau:\O_{K,\p}\ra\O_{L,\tau(\p)}$ for all primes $\p$ of $K$.
\item\label{dynmorq}
For all primes $\p$ of $K$, the set $S_{\tau(\p)}\subseteq\O_{L,\tau(\p)}$ is a subset of some translate of $A(R'_\p)\subseteq\O_{L,\tau(\p)}$, where we define $R'_\p = \bigcap_{T\in\T}(-T+R_\p) \subseteq \O_{K,\p}$.
\item\label{dynmorc}
If $(f,A)$ is an isomorphism, then for all primes $\p$ of $K$, the set $S_{\tau(\p)}$ is a translate of $A(R_\p)$.
\item\label{dynmorb}
If $(f,A)$ is a factor map, then there is a set $T\in\T$ such that for all but finitely many primes $\p$ of $K$, the set $S_{\tau(\p)}$ is a translate of $A(-T+R_\p)$.
\end{enumerate}
\end{theorem}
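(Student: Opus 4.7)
The plan is to establish~(a) by applying \Cref{thm_main}, and then derive (b)--(d) from the block-code description of morphisms provided by \Cref{special_chl}.

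For~(a), I apply \Cref{thm_main} to $A$ with the sieve $R'$ on $K$ from~(b) and a sieve $\tilde S$ on $L$ whose local parts are translates $\tilde S_\q=\delta_\q+S_\q$ of the given~$S_\q$, with the $\delta_\q$ to be determined in the course of the argument. Three of the four hypotheses of \Cref{thm_main} are routine: the splitting condition comes from \Cref{lemma_infinitely_many_split}; the bound $\sum_{\p\mid p}\meas(R'_\p)<1$ follows from $R'_\p\subseteq -T+R_\p$ for any fixed $T\in\T$ (which gives $\meas(R'_\p)\leq|T|\,\meas(R_\p)$) combined with the non-largeness of~$R$; and $A(\O_K)\cap\tilde S_\q\neq\emptyset$ is automatic from the surjectivity of~$A$ and the cofiniteness of~$S$. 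The crux is the inclusion $A(V_p(K,R'))\subseteq V_p(L,\tilde S)$ for infinitely many split~$p$. To prove it, I enumerate the residue classes $y\in V_p(K,R')$ modulo a sufficiently high power of~$p$; for each such~$y$ and each $\p\mid p$, the definition of $R'$ supplies a pattern $T^{(y,\p)}\in\T$ with $(y_\p+T^{(y,\p)})\cap R_\p=\emptyset$. Using \Cref{dynamical_crt2} I produce pairwise far-apart lifts $x^{(y,\p)}\in\O_K$ such that $X:=\bigcup_{(y,\p)}(x^{(y,\p)}+T^{(y,\p)})$ is admissible, and the block-code form of $f$ forces $A(x^{(y,\p)})\in f(X)$ for every such pair. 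Since $f(X)\in\X(L,S)$, there exist translates $\delta_\q$ (depending on $X$ but uniform in~$y$) with $f(X)\cap(\delta_\q+S_\q)=\emptyset$ for every $\q\mid p$, which is exactly the desired inclusion. \Cref{thm_main} then yields $A=M_\varepsilon\circ\tau$ for a $\Q$-algebra homomorphism $\tau\colon K\to L$ and $\varepsilon\in\O_L$; combining with the surjectivity of~$A$ and the finiteness of its kernel (\Cref{special_chl}) forces $\tau$ to be a $\Q$-algebra isomorphism and $\varepsilon$ a unit.

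Part~(b) follows from a single-prime version of the same construction: at a fixed prime~$\p$ of~$K$, I pack translates $x_y+T_y$ (one per residue class $y\in\O_{K,\p}\setminus R'_\p$ modulo its defining power) into one admissible $X\in\X(K,R)$, and the admissibility of $f(X)$ at $\tau(\p)$ yields a single translation $\delta$ with $A(x_y)-\delta\notin S_{\tau(\p)}$ uniformly in~$y$; this rearranges to $S_{\tau(\p)}\subseteq\delta+A(R'_\p)$, using the bijection $\O_{K,\p}\to\O_{L,\tau(\p)}$ coming from~(a). For~(c), if $(A,f)$ is an isomorphism then $(A^{-1},f^{-1})$ is a morphism in the reverse direction, so~(b) applies in both directions; \Cref{special_chl} additionally supplies a singleton $\{t_0\}\in\T$, forcing $R'_\p\subseteq -t_0+R_\p$ and hence $A(R'_\p)\subseteq A(R_\p)-A(t_0)$. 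Comparing Haar measures of the mutually containing sets then collapses all containments to equalities, so $S_{\tau(\p)}$ is a translate of $A(R_\p)$. For~(d), surjectivity of~$f$ combined with a pigeonhole argument over the finite family~$\T$ isolates a single pattern $T\in\T$ that governs~$f$ at all but finitely many primes, and combining with~(b) yields the equality $S_{\tau(\p)}=\delta_\p+A(-T+R_\p)$ for almost all~$\p$.

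The main obstacle is the uniformity of the translate $\delta_\q$ in the first construction: to apply \Cref{thm_main} we need a single sieve $\tilde S$ on $L$, not one depending on $y\in V_p(K,R')$, so one must pack \emph{all} local obstructions into one admissible~$X$. \Cref{dynamical_crt2} is precisely the tool for this packing, and both the non-largeness of~$R$ (to control measures across primes) and the cofiniteness of~$S$ (to provide non-trivial $S$-classes for all large~$\q$) are indispensable.
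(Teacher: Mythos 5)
Your overall strategy is the paper's: establish (a) by feeding a block-code packing argument (via \Cref{dynamical_crt2}) into \Cref{thm_main}, then deduce (b)--(d) locally. But your proof of the key inclusion for (a) has a genuine gap at exactly the primes that matter. You work with $R'_\p=\bigcap_{T\in\T}(-T+R_\p)$ and, for each residue class $y\in V_p(K,R')$ and each $\p\mid p$, choose a pattern $T^{(y,\p)}$ and a separate lift $x^{(y,\p)}$. If the lift is only constrained to agree with $y$ at the single prime $\p$, then the conclusion ``$A(x^{(y,\p)})$ avoids $\delta_\q+S_\q$ for all $\q\mid p$'' tells you nothing about $A(y)$: at this stage $A$ is merely $\Z_p$-linear, so $A(x^{(y,\p)})\bmod p^k$ depends on the components of $x^{(y,\p)}$ at \emph{all} primes above $p$, which are uncontrolled. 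If instead you force the lift to agree with $y$ at all $\p\mid p$, then packing it into one admissible $X$ (the step needed, as you correctly note, for the uniformity of $\delta_\q$) requires a \emph{single} pattern $T$ with $y_\p\notin -T+R_\p$ simultaneously for all $\p\mid p$; membership in $V_p(K,R')$ only provides a possibly different pattern at each $\p$, and the relevant primes are completely split (hypothesis \ref{thm_main_cond_spl} of \Cref{thm_main}), so there are several primes above $p$. The paper avoids this by fixing one $T\in\T$ and proving the inclusion only for the smaller sets $V_p(K,-T+R)$, which is all \Cref{thm_main} needs; your argument is repaired by the same substitution, but as written the claimed inclusion $A(V_p(K,R'))\subseteq V_p(L,\tilde S)$ is not established.

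Part (d) is also too thin. A pigeonhole over the finite family $\T$ would at best give one pattern $T$ that works for infinitely many primes, whereas the statement requires $T$ to work for all but finitely many; moreover, even the per-prime input (``some single $T$ accounts for all classes outside $S_{\tau(\p)}$'') does not follow from surjectivity at one prime alone -- applying the block code to one preimage only yields, for each class $y\notin S_{\tau(\p)}$, a pattern $T_y$ depending on $y$, hence only $\bigcap_{T\in\T}A(\delta_\p-T+R_\p)\subseteq S_{\tau(\p)}$. The paper's proof needs a genuinely different mechanism: a simultaneous construction at a finite set of primes (its Claim 2.2, pulling back one admissible $Y$ that realizes every tuple of non-$S$ classes at those primes) followed by a diagonalization choosing one ``bad'' prime per pattern, which forces $A(\delta_i-T_i+R_{\p_i})\subseteq S_{\tau(\p_i)}$ at some $i$ and then, combined with (b), a contradiction. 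Your outline asserts the conclusion but does not contain this argument. Parts (b) and (c) of your sketch are fine and match the paper.
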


\begin{remark}
When $K=L$ and $\X(K,R)\subseteq\X(L,S)$ and $f:\X(K,R)\ra\X(L,S)$ is the inclusion map, one can easily show $A=\id$ and the theorem is equivalent to \Cref{lemma_admissible_inclusions}.
\end{remark}

\begin{remark}
There is the following ``converse'' of \ref{dynmora} together with \ref{dynmorq}: Let $R$ and $S$ be sieves for $K$ and $L$, respectively. Let $A=M_\varepsilon\circ\tau$ as in \ref{dynmora}. Let $M$ be a finite subset of $\O_K$ and let $\T\subseteq\X(K,R)$ be a family of subsets of $M$. Assume that for each prime $\p$ of $K$, the set $S_{\tau(\p)}$ is a subset of some translate of $R'_\p=\bigcap_{T\in\T}(-T+R_\p)$. Then, we obtain a well-defined morphism $(f,A):\D(K,R)\ra\D(L,S)$ with the given window $M$ and pattern set $\T$, i.e., so that as in \Cref{special_chl}\ref{special_chl_block}, we have $A(x)\in f(X)$ if and only if $X\cap(x+M)=x+T$ for some pattern $T\in\T$.
\end{remark}

\begin{remark}
There is the following ``converse'' of \ref{dynmora} together with \ref{dynmorc}: Let $R$ and $S$ be sieves for $K$ and $L$, respectively. Let $A=M_\varepsilon\circ\tau$ as in \ref{dynmora}. Assume that for each prime $\p$ of $K$, the set $S_{\tau(\p)}$ is a translate of $A(R_\p)$. Then, there is an isomorphism $(f,A):\D(K,R)\ra\D(L,S)$ with $f(X) = A(X)$ for $X\in\X(K,R)$.
\end{remark}

\begin{remark}
In the proofs of \ref{dynmorq}--\ref{dynmorb}, we could assume without loss of generality that $K=L$ and $A=\id$ by composing with the isomorphism $(f',A^{-1}):\D(L,S)\ra\D(K,\widetilde S)$, where $f'(X)=A^{-1}(X)$ and $\widetilde S_\p = A^{-1}(S_{\tau(\p)})$.
\end{remark}

Our proof of \Cref{dynmor} borrows some ideas from the proof of \cite[Theorem 4.1]{positive-entropy-shifts}. The diagonalization argument in \ref{dynmorb} seems to be new.

\begin{proof}
First, note that the morphism $(f,A)$ satisfies the assumptions of \Cref{special_chl}.

\begin{enumerate}[label=(\alph*)]
\item
Since $\O_K\cong\Z^n$ has no non-trivial finite subgroups, \Cref{special_chl}\ref{special_chl_kernel} implies that the surjective map $A:\O_K\ra\O_L$ is also injective. Define the sieve $R'$ as in \ref{dynmorq}. Recall that the $\Z$-linear bijection $A:\O_K\ra\O_L$ gives rise to a $\Z_p$-linear bijection $A:\O_{K,p}\ra\O_{L,p}$ for each prime number $p$.

Using that $f:\X(K,R)\ra\X(L,S)$ maps admissible subsets of $\O_K$ for $R$ to admissible subsets of $\O_L$ for $S$, we now show:

\begin{claim}\label{claim1}
Let $T\in\T$ and let $p$ be any prime number. Then, $A(V_p(K,-T+R))\subseteq V_p(L,\delta+S)$ for some $\delta=(\delta_\q)_\q\in\prod_{\q}\O_{L,\q}$. (Here $-T+R$ refers to the sieve $R''$ defined by $R''_\p=-T+R_\p$.)
\end{claim}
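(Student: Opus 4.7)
The plan is to combine the block-code description \Cref{special_chl}\ref{special_chl_block} with \Cref{dynamical_crt2} to assemble, for the given $T\in\T$ and prime $p$, a single admissible set $X\in\X(K,R)$ that simultaneously encodes every residue class of $V_p(K,-T+R)$. Admissibility of $f(X)\in\X(L,S)$ will then produce translates $\delta_\q+S_\q$ avoided by $f(X)$, and a careful choice of modulus $N$ will propagate these avoidance conditions to all of $A(V_p(K,-T+R))$ at once.

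Concretely, say $R_\p$ is defined modulo $\p^{a_\p^\circ}$ and $S_\q$ modulo $\q^{b_\q}$, and set $B:=\max_{\q\mid p}b_\q$. Choose exponents $a_\p\geq a_\p^\circ$ large enough that $\p^{a_\p}\O_{K,\p}\subseteq p^B\O_{K,\p}$ for every $\p\mid p$, let $N:=\prod_{\p\mid p}\p^{a_\p}$, and fix representatives $y_1,\dots,y_r\in\O_K$ of the (finitely many) cosets of $N\O_{K,p}$ contained in $V_p(K,-T+R)$. Apply \Cref{dynamical_crt2} with $T_i:=T$, with window $M$ as in \Cref{special_chl}\ref{special_chl_block}, and with $U_i$ the preimage in $\prod_\p\O_{K,\p}$ of $y_i+N\O_{K,p}$ at primes above $p$ (unconstrained elsewhere); admissibility of $T$ (which follows from $\T\subseteq\X(K,R)$) ensures each $U_i$ meets $\prod_\p\O_{K,\p}\setminus(-T+R_\p)$. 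One obtains $x_1,\dots,x_r\in\O_K$ with $x_i\equiv y_i\pmod{N\O_{K,p}}$, with $X:=\bigcup_i(x_i+T)\in\X(K,R)$, and with the $x_i+M$ pairwise disjoint. Pairwise disjointness and $T\subseteq M$ force $X\cap(x_i+M)=x_i+T$, so \Cref{special_chl}\ref{special_chl_block} gives $A(x_i)\in f(X)$ for every $i$.

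Since $f(X)\in\X(L,S)$ is admissible, for each prime $\q$ of $L$ there exists $\delta_\q\in\O_{L,\q}$ with $f(X)\cap(\delta_\q+S_\q)=\emptyset$; in particular, for $\q\mid p$, $\pi_\q(A(x_i))\notin\delta_\q+S_\q$ for every $i$. To extend this to an arbitrary $y\in V_p(K,-T+R)$, pick $i$ with $y\equiv x_i\pmod{N\O_{K,p}}$; the $\Z_p$-linear extension $A:\O_{K,p}\to\O_{L,p}$ sends $y-x_i\in N\O_{K,p}\subseteq p^B\O_{K,p}$ into $p^B A(\O_{K,p})\subseteq p^B\O_{L,p}\subseteq\prod_{\q\mid p}\q^{b_\q}\O_{L,\q}$, so $\pi_\q(A(y))\equiv\pi_\q(A(x_i))\pmod{\q^{b_\q}}$. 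Since $\delta_\q+S_\q$ is defined modulo $\q^{b_\q}$, this yields $\pi_\q(A(y))\notin\delta_\q+S_\q$, which is exactly the desired inclusion $A(V_p(K,-T+R))\subseteq V_p(L,\delta+S)$.

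The only non-routine step is propagating congruences from $\O_{K,p}$ down to each local factor $\O_{L,\q}$; that is precisely why $N$ must be refined to satisfy $N\O_{K,p}\subseteq p^B\O_{K,p}$, after which $\Z_p$-linearity of $A$ takes care of the rest. Everything else is a direct use of \Cref{dynamical_crt2} and the block-code characterization of $f$.
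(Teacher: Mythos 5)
Your proof is correct and follows essentially the same route as the paper: both arguments use \Cref{dynamical_crt2} to build an admissible set $X=\bigcup_i(x_i+T)$ with one $x_i$ in each residue class of $V_p(K,-T+R)$, apply the block-code description from \Cref{special_chl}\ref{special_chl_block} to get $A(x_i)\in f(X)$, and then use admissibility of $f(X)$ together with the fact that $S_\q$ is defined modulo a fixed power to propagate the avoidance from the points $A(x_i)$ to whole residue classes. The only cosmetic difference is that the paper works with a single exponent $k$ such that all relevant $R_\p$ and $S_\q$ are defined modulo $p^k$, whereas you track the moduli $N$ and $p^B$ separately; the $\Z_p$-linearity argument is the same.
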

\begin{proof}
Let $k\geq0$ be large enough so that all $R_\p$ for $\p\mid p$ and $S_\q$ for $\q\mid p$ are defined modulo $p^k$. The set $V_p(K,-T+R)\subseteq\O_{K,p}$ is a union of residue classes $C_1,\dots,C_r\subseteq\O_{K,p}$ modulo $p^k$.
Each residue class $C_i$ is by definition disjoint from $-T+R_\p$ for all primes~$\p\mid p$ of~$K$.
Apply \Cref{dynamical_crt2} to the sieve $R$, the finite set $M$ from \Cref{special_chl}\ref{special_chl_block}, the admissible sets $T_1=\dots=T_r=T$, and the open subsets $U_1,\dots,U_r$ with $U_i = C_i \times \prod_{\p\nmid p}\O_{K,\p}$. We obtain an admissible set $X=\bigcup_{i=1}^r(x_i+T)\in\X(K,R)$.

Applying $f$, we obtain an admissible set $f(X) \in \X(L,S)$. This set $f(X)$ must be disjoint from some translate $\delta_\q+S_\q$ of each $S_\q$.

Since $T\subseteq M$ and the sets $x_i+M$ for $i=1,\dots,r$ are pairwise disjoint, we have $X\cap(x_i+M)=x_i+T$ for all $i$. By \Cref{special_chl}\ref{special_chl_block}, this means that $A(x_1),\dots,A(x_r)$ lie in $f(X)$.

We conclude that $\{A(x_1),\dots,A(x_r)\}$ is disjoint from some translate $\delta_\q+S_\q$ of each $S_\q$ with $\q\mid p$. The set $S_\q$ is defined modulo $p^k$, so in fact the union $A(C_1)\cup\dots\cup A(C_r) = A(V_p(K,-T+R))$ of the residue classes modulo $p^k$ containing the numbers $A(x_1),\dots,A(x_r)$ must be disjoint from the translate $\delta_\q+S_\q$. In other words, $A(V_p(K,-T+R)) \subseteq V_p(L,\delta+S)$.
\end{proof}

We will apply \Cref{thm_main} to the sieves $-T+R$ and $\delta+S$ for $K$ and $L$, respectively, and to the $\Z$-linear bijection $A:\O_K\ra\O_L$. To do this, we need to check that there are infinitely many prime numbers satisfying conditions \ref{thm_main_cond_inc}--\ref{thm_main_cond_spl} of \Cref{thm_main}:
\begin{enumerate}[label=(\alph*)]
\item We have $A(V_p(K,-T+R))\subseteq V_p(L,\delta+S)$ for all prime numbers $p$ by \Cref{claim1}.
\item We have $A(\O_K)\cap(\delta_q+S_\q)=\delta_\q+S_\q\neq\emptyset$ for all but finitely many primes $\q$ of $L$ as $A$ is surjective and $S$ is cofinite.
\item We have $\meas(-T+R_\p)<\frac1n$ for all but finitely many primes $\p$ of $K$: Indeed, $\meas(-T+R_\p)\leq|T|\cdot\meas(R_\p)$ is less than $\frac1n$ for all but finitely many primes $\p$ as $R$ is non-large.
\item There are infinitely many prime numbers $p$ that split completely in $K$ and $L$ by \Cref{lemma_infinitely_many_split}.
\end{enumerate}
Hence, we can apply \Cref{thm_main}. We learn that $A$ is the composition of a $\Q$-algebra homomorphism $\tau:K\ra L$ and the multiplication by $\varepsilon$ map for some $\varepsilon\in\O_L$. Since $A:\O_K\ra\O_L$ is bijective, the map $\tau$ must be an isomorphism and we must have $\varepsilon\in\O_L^\times$. (The image of $A$ is contained in the ideal generated by $\varepsilon$.)

\item
From \ref{dynmora}, we have an isomorphism $A:\O_{K,\p}\ra\O_{L,\tau(\p)}$ for every prime $\p$ of $K$. With this additional knowledge, we can essentially repeat the proof of \Cref{claim1} to show the (slightly stronger) claim \ref{dynmorq}:

Let $k\geq0$ be large enough so $R_\p$ and $S_{\tau(\p)}$ are defined modulo $\p^k$ and $\tau(\p)^k$, respectively. The set $\O_{K,\p}\setminus R_\p'$ is a union of residue classes $C_1,\dots,C_r$ modulo $\p^k$. By the definition of $R'_\p$, for each residue class $C_i$, there is a pattern $T_i\in\T$ such that $C_i$ is not contained in $-T_i+R_\p$. Apply \Cref{dynamical_crt2} to the sieve $R$, the finite set $M$ from \Cref{special_chl}\ref{special_chl_block}, the admissible sets $T_1,\dots,T_r$, and the open subsets $U_1,\dots,U_r$ with $U_i = C_i \times \prod_{\p'\neq\p}\O_{K,\p'}$. We obtain a set $X=\bigcup_{i=1}^r(x_i+T_i)\in\X(K,R)$. As in the proof of \Cref{claim1}, it follows that $A(C_1)\cup\cdots\cup A(C_r)=A(\O_K\setminus R'_\p)=\O_L\setminus A(R'_\p)$ is disjoint from some translate of $S_{\tau(\p)}$. Hence, $S_{\tau(\p)}$ is a subset of some translate of $R'_\p$.

\item
We assume that $(f,A)$ is an isomorphism. By \Cref{special_chl}\ref{special_chl_singleton}, the family $\T$ contains a singleton set $T=\{t\}$. With \ref{dynmorq}, we conclude that for all primes $\p$, the set $S_{\tau(\p)}$ is a subset of some translate of $A(R_\p)$. Applying the same argument to the inverse morphism, we see that $A(R_\p)$ is a subset of some translate of $S_{\tau(\p)}$. Together, it follows that $S_{\tau(\p)}$ is a translate of $A(R_\p)$.

\item
We assume that $(f,A)$ is a factor map, so $f:\X(K,R)\ra\X(L,S)$ is surjective. Using that every admissible subset of $\O_L$ for $S$ has a preimage which is an admissible subset of $\O_K$ for $R$, we prove:

\begin{claim}\label{claim2}
Consider a finite set $\Omega$ of primes $\p$ of $K$. There is a collection of elements $\delta_\p\in\O_{K,\p}$ for $\p\in\Omega$ such that the following holds: For all $y=(y_\p)_\p \in \prod_{\p\in\Omega}\O_{L,\tau(\p)}\setminus S_{\tau(\p)}$, there is a set $T_y\in\T$ such that for all $\p\in\Omega$, the number $y_\p$ does not lie in $A(\delta_\p-T_y+R_\p)$.
\end{claim}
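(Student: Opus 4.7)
The plan is to apply surjectivity of $f$ to a \emph{single} carefully chosen admissible set $Y\in\X(L,S)$ that simultaneously encodes every possible local ``type'' of $y$; one preimage $X$ will then furnish witnesses $\delta_\p$ valid for all $y$ at once.

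First, fix an integer $k\geq1$ large enough that $R_\p$ is defined modulo $\p^k$ and $S_{\tau(\p)}$ modulo $\tau(\p)^k$ for every $\p\in\Omega$. Then $\O_{L,\tau(\p)}\setminus S_{\tau(\p)}$ decomposes into finitely many residue classes $D_{\p,1},\dots,D_{\p,s_\p}$ modulo $\tau(\p)^k$, so the set $\Psi:=\prod_{\p\in\Omega}\{1,\dots,s_\p\}$ of \emph{types} is finite. For each type $\psi=(i_\p)_{\p}\in\Psi$, pick a representative $y^\psi_\p\in D_{\p,i_\p}$.

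Next, apply \Cref{dynamical_crt2} to the sieve $S$ for $L$, using singleton admissible patterns $T_\psi:=\{0\}$ (one per $\psi\in\Psi$) and open sets $U_\psi\subseteq\prod_\q\O_{L,\q}$ cutting out the residue class $D_{\p,i_\p}$ at $\tau(\p)$ for every $\p\in\Omega$ and being unconstrained at every other prime $\q$. The intersection hypothesis of that lemma holds: at $\q=\tau(\p)$ since $D_{\p,i_\p}\subseteq\O_{L,\tau(\p)}\setminus S_{\tau(\p)}$, and at other $\q$ since $S_\q\neq\O_{L,\q}$ by non-largeness of $S$. This yields elements $w_\psi\in\O_L$ with $w_\psi\equiv y^\psi_\p\pmod{\tau(\p)^k}$ for every $\p\in\Omega$ and such that $Y:=\{w_\psi:\psi\in\Psi\}\in\X(L,S)$. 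Using surjectivity of $f$, pick any preimage $X\in\X(K,R)$ with $f(X)=Y$, and for each $\p\in\Omega$ fix some admissibility witness $\delta_\p\in\O_{K,\p}$ of $X$ at $\p$ (i.e., $X\cap(\delta_\p+R_\p)=\emptyset$). These $\delta_\p$ will be the ones claimed.

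To verify, take any $y=(y_\p)_\p$ with $y_\p\in\O_{L,\tau(\p)}\setminus S_{\tau(\p)}$ and let $\psi\in\Psi$ be its type, so $y_\p\in D_{\p,i_\p(\psi)}$. Setting $x_\psi:=A^{-1}(w_\psi)\in\O_K$ (well-defined by part (a)), one has $w_\psi=A(x_\psi)\in Y=f(X)$, and the block code from \Cref{special_chl}\ref{special_chl_block} delivers $T_\psi\in\T$ with $X\cap(x_\psi+M)=x_\psi+T_\psi$. Set $T_y:=T_\psi$. The inclusions $x_\psi+T_\psi\subseteq X$ and $X\cap(\delta_\p+R_\p)=\emptyset$ force $T_\psi\cap(\delta_\p-x_\psi+R_\p)=\emptyset$. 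Since $w_\psi\equiv y^\psi_\p\equiv y_\p\pmod{\tau(\p)^k}$, the $\p$-adic extension of $A$ gives $x_\psi\equiv A^{-1}(y_\p)\pmod{\p^k}$; combined with the $\p^k$-invariance of $R_\p$, the sets $\delta_\p-x_\psi+R_\p$ and $\delta_\p-A^{-1}(y_\p)+R_\p$ coincide in $\O_{K,\p}$. Applying the bijection $A$ translates this into $y_\p\notin A(\delta_\p-T_y+R_\p)$ for every $\p\in\Omega$, as desired.

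The main obstacle is recognizing the need for bundling: a naive argument constructing a separate admissible set $Y_y$ (and preimage $X_y$) for each $y$ would produce witnesses $\delta^y_\p$ depending on $y$, which is too weak for the claim. Since the finitely many residue-class types exhaust all behaviors of $y_\p$ modulo $\tau(\p)^k$, packaging them all into a single admissible $Y$ and extracting $\delta_\p$ from the admissibility of one preimage $X$ is precisely what yields a uniform answer.
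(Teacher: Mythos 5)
Your proof is correct and takes essentially the same route as the paper: a single application of \Cref{dynamical_crt2} to the sieve $S$ bundling all residue-class types modulo $\tau(\p)^k$ into one admissible set $Y$, one preimage $X$ under the surjection $f$ whose admissibility witnesses give the uniform $\delta_\p$, and the block code of \Cref{special_chl}\ref{special_chl_block} producing $T_y$. The only cosmetic blemish is the reuse of the symbol $T_\psi$ both for the auxiliary singleton patterns $\{0\}$ and for the block-code pattern in $\T$, which does not affect the argument.
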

\begin{proof}
Let $k\geq0$ be large enough so that $R_\p$ is defined modulo $\p^k$ and $S_{\tau(\p)}$ is defined modulo $\tau(\p)^k$ for all $\p\in\Omega$. For each $\p\in\Omega$, write $\O_{L,\tau(\p)}\setminus S_{\tau(\p)}$ as a union of residue classes $C_{\p,i}$ modulo $\tau(\p)^k$ with $i=1,\dots,I_\p$.

Apply \Cref{dynamical_crt2} to the sieve $S$ for $L$, the sets $M=\emptyset$ and $T_i=\{0\}$ and to the open subsets $U_i = \prod_{\p\in\Omega}C_{\p,i_\p}\times\prod_{\p\notin\Omega}\O_{L,\tau(\p)}$ with $i=(i_\p)_\p\in\prod_{\p\in\Omega}\{1,\dots,I_\p\}$. We obtain an admissible set $Y\in\X(L,S)$ intersecting all sets $U_i$.
By the surjectivity of $f:\X(K,R)\ra\X(L,S)$, we can pick some preimage $X\in\X(K,R)$ of $Y$. Since $X$ is admissible for $R$, it is disjoint from some translate $\delta_\p+R_\p$ of $R_\p$ for each $\p\in\Omega$.

Now, take any $y=(y_\p)_\p\in\prod_{\p\in\Omega}\O_{L,\tau(\p)}\setminus S_{\tau(\p)}$. Take $i=(i_\p)_\p\in\prod_{\p\in\Omega}\{1,\dots,I_\p\}$ so that $C_{\p,i_\p}$ is the residue class containing $y_\p$ for all $\p\in\Omega$. Take an element $y'$ of $Y = f(X)$ whose image in $\prod_\q\O_{L,\q}$ lies in $U_i$. As $y'$ lies in $f(X)$, by \Cref{special_chl}\ref{special_chl_block}, we have $X\cap(A^{-1}(y')+M)=A^{-1}(y')+T_y$ for some pattern $T_y\in\T$.
Then, $A^{-1}(y')+T_y\subseteq X$ must be disjoint from $\delta_\p+R_\p$. Since $y'\equiv y_\p\mod\tau(\p)^k$ and $R_\p$ is a union of conjugacy classes modulo $\p^k$, it follows that $A^{-1}(y_\p)+T_y$ is disjoint from $\delta_\p+R_\p$ as well. Applying the ($\Z$-linear) bijection $A$, we conclude that $y_\p+A(T_y)$ is disjoint from $A(\delta_\p+R_\p)$, so $y_\p\notin A(\delta_\p-T_y+R_\p)$.
\end{proof}

Assume \ref{dynmorb} does not hold, so for every $T\in\T$, there are infinitely many primes $\p$ of $K$ such that $S_{\tau(\p)}$ is not a translate of $A(-T+R_\p)$. We will obtain a contradiction using a diagonalization argument. Denote the elements of $\T$ by $T_1,\dots,T_r$. By assumption, there are distinct primes $\p_1,\dots,\p_r$ of $K$ such that for each $1\leq i\leq r$, the set $S_{\tau(\p_i)}$ is not a translate of $A(-T_i+R_{\p_i})$.

From \Cref{claim2} with $\Omega=\{\p_1,\dots,\p_r\}$ we obtain numbers $\delta_1,\dots,\delta_r$ such that for every $y=(y_i)_i\in\prod_{i=1}^r\O_{L,\tau(\p_i)}\setminus S_{\tau(\p_i)}$, there is a set $T_y\in\T$ such that for all $1\leq i\leq r$, we have $y_i\notin A(\delta_i-T_y+R_{\p_i})$. Choosing $i$ so that $T_i=T_y$, we see that for every $y\in\prod_{i=1}^r\O_{L,\tau(\p_i)}\setminus S_{\tau(\p_i)}$, there is some $1\leq i\leq r$ such that $y_i\notin A(\delta_i-T_i+R_{\p_i})$. This simply means that there is some $1\leq i\leq r$ such that for all $y\in\O_{L,\tau(\p_i)}\setminus S_{\tau(\p_i)}$, we have $y\notin A(\delta_i-T_i+R_{\p_i})$. Then, $A(\delta_i-T_i+R_{\p_i})$ is a subset of $S_{\tau(\p_i)}$. Conversely, according to \ref{dynmorq}, $S_{\tau(\p_i)}$ is a subset of some translate of $A(R'_\p) \subseteq A(-T_i+R_{\p_i})$. Together, it follows that $S_{\tau(\p_i)}=A(\delta_i-T_i+R_{\p_i})$. This contradicts the assumption that $S_{\tau(\p_i)}$ is not a translate of $A(-T_i+R_{\p_i})$, finishing the proof of \ref{dynmorb}.
\qedhere
\end{enumerate}
\end{proof}

We obtain the following criterion for topological conjugacy:

\begin{corollary}\label{thm_conjugacy_criterion}
Let $R$ and $S$ be non-large cofinite sieves for $K$ and $L$, respectively. The topological dynamical systems $\D(K,R)$ and $\D(L,S)$ are topologically conjugate if and only if there is a $\Q$-algebra isomorphism $\tau:K\ra L$ and a unit $\varepsilon\in\O_L^\times$ such that for all primes $\p$ of $K$, the set $S_{\tau(\p)}$ is a translate of $\varepsilon\cdot\tau(R_\p)$.
\end{corollary}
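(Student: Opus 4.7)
The plan is to prove the two implications separately, and fortunately Theorem~\ref{dynmor} has done essentially all the heavy lifting for the nontrivial direction.

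For the forward direction, suppose $(A,f) : \D(K,R) \to \D(L,S)$ is a topological conjugacy. I would check the hypotheses of Theorem~\ref{dynmor}: the group homomorphism $A$ is in particular surjective (since it is a group isomorphism), and the image of $f$ contains the non-empty set $f(\{x\})$ for any $x \in \O_K$ (these singletons lie in $\X(K,R)$ because $R$ is non-large, so $f(\{x\})$ is a non-empty admissible set in $\X(L,S)$). Then part \ref{dynmora} of Theorem~\ref{dynmor} immediately produces the $\Q$-algebra isomorphism $\tau : K \to L$ and the unit $\varepsilon \in \O_L^\times$ with $A = M_\varepsilon \circ \tau$, and part \ref{dynmorc} says that for every prime $\p$ of $K$, the set $S_{\tau(\p)}$ is a translate of $A(R_\p) = \varepsilon \cdot \tau(R_\p)$. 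This is exactly the stated conclusion.

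For the backward direction, I would explicitly construct a topological conjugacy from the given data. Set $A := M_\varepsilon \circ \tau : \O_K \to \O_L$, which is a $\Z$-linear bijection (since $\tau$ is a $\Q$-algebra isomorphism and $\varepsilon$ is a unit). It extends to isomorphisms $A : \O_{K,\p} \to \O_{L,\tau(\p)}$ for every prime $\p$ of $K$, and the induced map $\p \mapsto \tau(\p)$ is a bijection between the primes of $K$ and those of $L$. Define $f : \X(K,R) \to \X(L,S)$ by $f(X) = A(X)$; this makes sense set-theoretically and satisfies the intertwining property $f(x + X) = A(x) + f(X)$ by linearity.

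The main verification is that $f$ lands in $\X(L,S)$. Take $X \in \X(K,R)$: for each prime $\p$ of $K$ there exists $\delta_\p \in \O_{K,\p}$ such that $X$ is disjoint from $\delta_\p + R_\p$. Applying the bijection $A$ on each completion, $A(X)$ is disjoint from $A(\delta_\p) + A(R_\p) = A(\delta_\p) + \varepsilon \cdot \tau(R_\p)$ in $\O_{L,\tau(\p)}$. By hypothesis, $S_{\tau(\p)}$ is a translate of $\varepsilon \cdot \tau(R_\p)$, so this disjoint-translate condition translates into $A(X)$ being disjoint from some translate of $S_{\tau(\p)}$. As $\tau$ is a bijection on primes, every prime $\q$ of $L$ arises this way, which shows $f(X) \in \X(L,S)$.

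Finally, continuity of $f$ is automatic: identifying $\P(\O_K) \cong \{0,1\}^{\O_K}$ and $\P(\O_L) \cong \{0,1\}^{\O_L}$, the map $f$ is precoordinate-relabeling by the bijection $A^{-1} : \O_L \to \O_K$, which is continuous in the product topology. The same construction with $\tau^{-1}$ and $\varepsilon^{-1}$ (noting that $\varepsilon^{-1}\cdot \tau^{-1}(S_\q)$ is a translate of $R_{\tau^{-1}(\q)}$, obtained from the hypothesis by applying $A^{-1}$ to both sides of the translate relation) produces a continuous inverse morphism, so $(A,f)$ is a topological conjugacy. I do not expect any real obstacle here; the only minor care needed is in tracking that ``translate of $\varepsilon \cdot \tau(R_\p)$'' is a symmetric relation, which is immediate since translates of a set and translates of any of its translates coincide.
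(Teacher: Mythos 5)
Your argument is correct and essentially identical to the paper's: the direction ``$\Rightarrow$'' is exactly the paper's appeal to \Cref{dynmor}\ref{dynmora} and \ref{dynmorc}, and ``$\Leftarrow$'' is the paper's explicit conjugacy $A=M_\varepsilon\circ\tau$, $f(X)=A(X)$, which you merely verify in more detail (admissibility of $A(X)$, intertwining, continuity, inverse). One small repair: it is not immediate from the definitions that $f(\{x\})\neq\emptyset$, so to check the hypothesis of \Cref{dynmor} that the image of $f$ contains a non-empty set, use instead that $f$ is surjective and that $\X(L,S)$ contains singletons because $S$ is non-large; also, in the inverse construction the multiplier should be written $\tau^{-1}(\varepsilon^{-1})\in\O_K^\times$ rather than $\varepsilon^{-1}$.
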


\begin{proof}
The implication ``$\Rightarrow$'' follows immediately from parts \ref{dynmora} and \ref{dynmorc} of \Cref{dynmor}. For ``$\Leftarrow$'' we can construct an isomorphism $(f,A):\D(K,R)\ra\D(L,S)$ as follows: $A(x)=\varepsilon\cdot\tau(x)$ for $x\in\O_K$ and $f(X)=A(X)$ for $X\in\X(K,R)$.
\end{proof}

Given the symmetry group of $\D(K,R)$, we can compute its extended symmetry group:

\begin{corollary}\label{thm_norm_mod_cent}
Let $R$ be a non-large cofinite sieve for $K$ and let $\D=\D(K,R)$ be the corresponding topological dynamical system. Then:
\begin{enumerate}[label=(\alph*)]
\item
The image of the map (\ref{forgetful_map}) is the group $\LinSym(\D)$ of maps $A:\O_K\ra\O_K$ of the form $A=M_\varepsilon\circ\tau$, where $\tau$ is a $\Q$-algebra automorphism of $K$ and $\varepsilon\in\O_K^\times$ is a unit such that $R_{\tau(\p)}$ is a translate of $\varepsilon\cdot\tau(R_\p)$ for all primes $\p$ of $K$.
\item
The surjective group homomorphism $\ExSym(\D) \twoheadrightarrow \LinSym(\D)$, $(f,A)\mapsto A$ splits, so
\[
\ExSym(\D) = \Sym(\D)\rtimes\LinSym(\D).
\]
\end{enumerate}
\end{corollary}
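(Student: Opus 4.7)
The plan is to prove both parts simultaneously by exhibiting an explicit splitting of the forgetful map $(A,f)\mapsto A$. For the inclusion showing that the image of this map is contained in $\LinSym(\D)$, I take $(A,f)\in\ExSym(\D)$ and regard it as an isomorphism $\D(K,R)\ra\D(K,R)$. Then \Cref{dynmor}\ref{dynmora} forces $A=M_\varepsilon\circ\tau$ with $\tau\in\Aut(K)$ and $\varepsilon\in\O_K^\times$, while \Cref{dynmor}\ref{dynmorc} applied with $S=R$ yields that $R_{\tau(\p)}$ is a translate of $A(R_\p)=\varepsilon\cdot\tau(R_\p)$ for every prime $\p$ of $K$, so $A\in\LinSym(\D)$. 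The hypotheses of \Cref{dynmor} are automatic in this situation: $\X(K,R)$ is hereditary and contains singletons (because $R$ is non-large, so $R_\p\neq\O_{K,\p}$), $\O_K\notin\X(K,R)$ (because $R$ is cofinite, so $R_\p\neq\emptyset$ for at least one prime), and $(A,f)$ being an isomorphism makes $A$ surjective and ensures that the image of $f$ contains non-empty sets.

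For the reverse inclusion in (a) and for part (b), I define a candidate section $s\colon\LinSym(\D)\ra\ExSym(\D)$ by $s(A)=(A,f_A)$, where $f_A(X):=A(X)$. The key verification is that $f_A$ is well-defined, i.e.\ that $A(X)\in\X(K,R)$ whenever $X\in\X(K,R)$. Given an admissible $X$ disjoint from some translate $\delta_\p+R_\p$ of $R_\p$ for each prime $\p$, its image $A(X)$ is disjoint from $A(\delta_\p)+\varepsilon\cdot\tau(R_\p)$, which by the defining property of $\LinSym(\D)$ is a translate of $R_{\tau(\p)}$. Since $\tau$ permutes the primes of $K$, the set $A(X)$ is disjoint from a translate of $R_\q$ for every prime $\q$ and is therefore admissible. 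Continuity of $f_A$ and the intertwining identity $f_A(x+X)=A(x)+f_A(X)$ both follow immediately from $\Z$-linearity of $A$, and $f_{A^{-1}}$ provides an inverse, so $(A,f_A)\in\ExSym(\D)$. This lifts $A$ into the image of the forgetful map, finishing (a).

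For (b), the identity $f_{A_1 A_2}(X)=(A_1A_2)(X)=f_{A_1}(f_{A_2}(X))$ shows that $s$ is a group homomorphism, and by construction its composition with the forgetful map is the identity on $\LinSym(\D)$. Since $\Sym(\D)$ is the kernel of the forgetful map by the discussion around~\eqref{forgetful_map}, this splitting yields the semidirect product decomposition $\ExSym(\D)=\Sym(\D)\rtimes\LinSym(\D)$. The only substantive step is the appeal to \Cref{dynmor} for the ``hard'' direction of (a); once that is in hand, constructing and verifying the section $s$ is routine.
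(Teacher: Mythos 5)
Your proposal is correct and follows essentially the same route as the paper, which proves the corollary "by the same argument as \Cref{thm_conjugacy_criterion}": \Cref{dynmor}\ref{dynmora} and \ref{dynmorc} (with $S=R$) for the hard inclusion, and the explicit section $A\mapsto(A,\,X\mapsto A(X))$ for the converse and the splitting. You merely spell out the routine verifications (admissibility of $A(X)$, the homomorphism property of the section) in more detail than the paper does.
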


\begin{proof}
This follows by the same argument as \Cref{thm_conjugacy_criterion}.
\end{proof}

\subsection{Application to \texorpdfstring{$k$}{k}-free integers}

We now apply the results from the previous section to the case of $k$-free sieves. For some open questions regarding other sieves, see also \Cref{section_morphism_open_questions}.

The symmetry group of $\D_{K,k}$ was computed in \cite[Theorem~5.3]{positive-entropy-shifts}, so we can show \Cref{intro_automorphisms}:

\begin{theorem}\label{thm_automorphisms}
Let $K$ be an étale $\Q$-algebra and let $k\geq1$.
\begin{enumerate}[label=(\alph*)]
\item The symmetry group of $\D_{K,k}$ is the group of translations by elements of $\O_K$.
\item The extended symmetry group consists of the pairs $(f,A)$, where $A:\O_K\ra\O_K$ is of the form $A=M_\varepsilon\circ\tau$ with $\tau\in\Aut(K)$ and $\varepsilon\in\O_K^\times$, and where $f:\X_{K,k}\ra\X_{K,k}$ is defined by $f(S)=t+\varepsilon\cdot\tau(S)$ with $t\in\O_K$. In particular, the extended symmetry group is isomorphic to $\O_K\rtimes(\O_K^\times\rtimes\Aut(K))$.
\end{enumerate}
\end{theorem}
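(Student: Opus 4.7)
The plan is to deduce both parts from the sieve-theoretic machinery of \Cref{section_shift_morphisms}, specialized to the $k$-free sieve $R_\p=\p^k$, which is non-large and cofinite. Part~(a) is essentially known in the literature, while part~(b) is the new content and will drop out once we identify the group $\LinSym(\D_{K,k})$ from \Cref{thm_norm_mod_cent}.

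For part~(a) I would invoke \cite[Theorem~5.3]{positive-entropy-shifts}, which establishes the statement for $K$ a number field; for a general étale $\Q$-algebra one reruns the Curtis--Hedlund--Lyndon argument of \Cref{special_chl}, using that $R_\p\neq\emptyset$ for all $\p$ (so $\O_K\notin\X_{K,k}$) and that every singleton is admissible. For part~(b), \Cref{thm_norm_mod_cent} gives $\ExSym(\D_{K,k})=\Sym(\D_{K,k})\rtimes\LinSym(\D_{K,k})$, where $\LinSym(\D_{K,k})$ consists of the maps $A=M_\varepsilon\circ\tau$ with $\tau\in\Aut(K)$ and $\varepsilon\in\O_K^\times$ satisfying the condition that $R_{\tau(\p)}$ is a translate of $\varepsilon\cdot\tau(R_\p)$ for every prime $\p$ of~$K$. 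Specialized to the $k$-free sieve, $\varepsilon\cdot\tau(\p^k)=\tau(\p)^k=R_{\tau(\p)}$ as sets (multiplication by a unit does not change an ideal), so the translation constraint is automatically satisfied with $\delta_\p=0$. Hence $\LinSym(\D_{K,k})=\O_K^\times\rtimes\Aut(K)$.

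Combining this with part~(a) gives the semidirect product $\O_K\rtimes(\O_K^\times\rtimes\Aut(K))$. The explicit description of $f$ follows by unwinding the splitting in \Cref{thm_norm_mod_cent}, whose section sends $A\in\LinSym(\D_{K,k})$ to the pair $(A,\,S\mapsto A(S))$: any extended symmetry therefore factors as a translation by some $t\in\O_K$ composed with such a pair, giving $f(S)=t+\varepsilon\cdot\tau(S)$. The only substantive obstacle is the adaptation of part~(a) to étale $\Q$-algebras if one does not wish to cite the number-field case directly; part~(b) is by contrast a bookkeeping exercise, since the heavy algebraic lifting---showing that every linear symmetry of $\D_{K,k}$ must take the form $M_\varepsilon\circ\tau$---has already been carried out in \Cref{thm_main}.
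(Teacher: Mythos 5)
Your part (b) is handled exactly as in the paper: by \Cref{thm_norm_mod_cent} the image of the forgetful map is $\LinSym(\D_{K,k})$, the translate condition is automatic because $\varepsilon\cdot\tau(\p^k)=\tau(\p)^k$, and the section $A\mapsto(A,\,S\mapsto A(S))$ together with part (a) yields $f(S)=t+\varepsilon\cdot\tau(S)$ and the semidirect product structure. No issue there.

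The gap is in part (a). You treat it as a citation plus a routine rerun of the Curtis--Hedlund--Lyndon argument, but \Cref{special_chl} by itself cannot show that every symmetry is a translation: its hypotheses (hereditary space, $\O_K\notin\X$, singletons admissible, non-large cofinite sieve) are all satisfied by the sieve in \Cref{section_morphism_open_questions} with $R_p=p\Z_p\cup(1+p\Z_p)$, which has a genuine non-translation symmetry. So any correct proof of (a) must exploit that the $k$-free sieve excludes a \emph{single} residue class modulo each $\p^k$, and this is where the actual work lies. The paper's argument runs: for a symmetry $(\id,f)$, \Cref{special_chl}\ref{special_chl_singleton} gives a singleton pattern $\{t_1\}\in\T$; every other pattern $T_2\in\T$ must contain $t_1$, since otherwise one can choose a prime $\p$ with $t_1\nequiv t_2\bmod\p^k$ for all $t_2\in T_2$, making $\bigcap_{T\in\T}(-T+\p^k)=\emptyset$ and contradicting \Cref{dynmor}\ref{dynmorq} (which in turn rests on the machinery of \Cref{thm_main}, not just on CHL); this yields $f(X)\subseteq -t_1+X$ for all $X$, and applying the same to $f^{-1}$ and testing on singletons forces $f$ to be a translation. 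None of these steps appears in your sketch. Moreover, the citation of \cite[Theorem~5.3]{positive-entropy-shifts} does not cover the stated generality: the theorem here is for étale $\Q$-algebras and for $k=1$, where $\X_{K,1}$ is only defined via admissible sets (the orbit-closure description fails), so the adaptation you defer to is precisely the content that must be supplied, and it is not a formality.
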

\begin{proof}
\begin{enumerate}[label=(\alph*)]
\item
We essentially follow \cite[Theorem~5.3]{positive-entropy-shifts}. Let $f$ be any symmetry (so $(\id,f)$ is an isomorphism) and let $M$ and $\T$ as in \Cref{special_chl}\ref{special_chl_block}. By part \ref{special_chl_singleton} of the same lemma, there is a singleton set $T_1=\{t_1\}\in\T$. Now, every (finite!) set $T_2\in\T$ must contain $t_1$: Otherwise, we can choose a prime $\p$ with $t_1\nequiv t_2\mod\p^k$ for all $t_2\in T_2$. Then, $\bigcap_{T\in\T}(-T+R_\p)\subseteq(-t_1+\p^k)\cap(-T_2+\p^k) = \emptyset$ cannot contain a translate of $R_\p=\p^k$, contradicting \Cref{dynmor}\ref{dynmorq}.

Hence, we indeed have $t_1\in T$ for all $T\in\T$. By \Cref{special_chl}\ref{special_chl_block}, it follows that $f(X)\subseteq -t_1+X$ for all $X\in\X_{K,k}$. Applying the same argument to the inverse morphism, we see that there is a number $t_1'\in\O_K$ such that $X\subseteq -t_1'+f(X)$ for all $X\in\X_{K,k}$. In summary, $X \subseteq -t_1'+f(X) \subseteq -t_1-t_1' + X$. Plugging in a singleton set $X$, we learn that $t_1+t_1'=0$. Then, equality follows for all $X\in\X_{K,k}$, so the map $f:\X_{K,k}\ra\X_{K,k}$ is given by translation by $t_1'$.
\item
Clearly, $\varepsilon\cdot\tau(\p^k)=\tau(\p)^k$ for all $\varepsilon\in\O_K^\times$ and $\tau\in\Aut(K)$. The claim follows with \Cref{thm_norm_mod_cent}.
\qedhere
\end{enumerate}
\end{proof}

We can now prove the implication (b) $\Rightarrow$ (c) in \Cref{intro_conjugate}:

\begin{theorem}\label{thm_factor_kfree}
Let $K,L$ be étale $\Q$-algebras and let $k,l\geq1$. If $\D_{L,l}$ is a factor system of $\D_{K,k}$, then $K\cong L$ and $k=l$.
\end{theorem}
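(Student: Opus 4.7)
My plan is to apply \Cref{dynmor} directly to the factor map $(A,f):\D_{K,k}\twoheadrightarrow\D_{L,l}$, using the $k$-free sieves $R_\p=\p^k$ on $K$ and $S_\q=\q^l$ on $L$ (both of which are non-large and cofinite).

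First, \Cref{dynmor}\ref{dynmora} immediately gives a $\Q$-algebra isomorphism $\tau:K\ra L$ and a unit $\varepsilon\in\O_L^\times$ with $A=M_\varepsilon\circ\tau$, so $K\cong L$. It remains to pin down the exponents. For this, I will use \Cref{dynmor}\ref{dynmorb}: there is a set $T\in\T$ such that for all but finitely many primes $\p$ of $K$, the set $S_{\tau(\p)}=\tau(\p)^l$ is a translate of $A(-T+\p^k)$.

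The key observation is that the measure is invariant under translation. On the one hand,
\[
\meas(\tau(\p)^l)=\Nm(\tau(\p))^{-l}=\Nm(\p)^{-l}.
\]
On the other hand, since $A:\O_{K,\p}\ra\O_{L,\tau(\p)}$ is a measure-preserving bijection and $-T+\p^k$ is a union of residue classes modulo $\p^k$,
\[
\meas(A(-T+\p^k))=\meas(-T+\p^k)=\#\big((-T)\bmod\p^k\big)\cdot\Nm(\p)^{-k}.
\]
For all but finitely many primes $\p$, the finitely many elements of $-T$ are pairwise incongruent modulo $\p^k$, so $\#((-T)\bmod\p^k)=|T|$. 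Equating measures gives
\[
|T|=\Nm(\p)^{\,k-l}\qquad\text{for all but finitely many primes }\p\text{ of }K.
\]

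Since $|T|$ is a fixed positive integer while $\Nm(\p)$ is unbounded as $\p$ ranges over primes of $K$, the only way this equation can hold for infinitely many $\p$ is $k=l$ (and then $|T|=1$). The sole routine step to double-check is the claim that the prime norms $\Nm(\p)$ are unbounded, which is immediate since $K$ has infinitely many primes of arbitrarily large residue characteristic. This concludes the argument; I do not anticipate any real obstacle, as all of the heavy lifting has already been done in \Cref{dynmor}.
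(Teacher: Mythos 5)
Your proposal is correct and follows essentially the same route as the paper: apply \Cref{dynmor}\ref{dynmora} and \ref{dynmorb} to the $k$-free and $l$-free sieves, then compare Haar measures of $\tau(\p)^l$ and of a translate of $A(-T+\p^k)$ as $\Nm(\p)\ra\infty$. One small caveat: your intermediate claim that the elements of $-T$ are pairwise incongruent modulo $\p^k$ for all but finitely many $\p$ is true when $K$ is a number field but can fail for a general étale $\Q$-algebra $K=K_1\times\cdots\times K_r$, since two distinct elements of $T$ may agree in the component corresponding to some factor $K_i$ and hence be congruent modulo $\p^k$ for \emph{every} prime $\p$ of that factor; the paper sidesteps this by only using the two-sided bound $\meas(-T+\p^k)\in[\Nm(\p)^{-k},|T|\cdot\Nm(\p)^{-k}]$, which is all your argument actually needs (a quantity bounded between $1$ and $|T|$ cannot equal $\Nm(\p)^{k-l}$ for infinitely many $\p$ unless $k=l$), so the conclusion stands unchanged.
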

\begin{proof}
By \Cref{dynmor}\ref{dynmora}\ref{dynmorb}, there is a $\Q$-algebra isomorphism $\tau:K\ra L$, a unit $\varepsilon\in\O_L^\times$, and a non-empty finite set $T\subseteq\O_K$ such that
\[
\varepsilon\cdot\tau(-T+\p^k)=\tau(\p)^l
\]
for all but finitely many primes $\p$ of $K$. The right-hand side has measure
\[
\meas(\tau(\p)^l) = \meas(\p^l) = \Nm(\p)^{-l}.
\]
The left-hand side has measure
\begin{align*}
&\meas(\varepsilon\cdot\tau(-T+\p^k))
= \meas(-T+\p^k) \\
&\in [\meas(\p^k),|T|\cdot\meas(\p^k)]
= [\Nm(\p)^{-k},|T|\cdot\Nm(\p)^{-k}].
\end{align*}
Comparing the two measures for $\Nm(\q)\ra\infty$, we conclude that $k=l$.
\end{proof}

We now use the local-global principle proved in \Cref{thm_sqfree_local_global} to show the equivalence of the global and the local definition given in \Cref{introduction} of the set $\X_{K,k}$. This was shown (for number fields) in \cite[Theorem~5.3]{positive-entropy-shifts}. To make this paper more self-contained, we reproduce their proof using our notation.

\begin{theorem}\label{thm_orbit_closure}
Let $k\geq2$ and let $R$ be the $k$-free sieve for $K$. Then, $\X_{K,k}=\X(K,R)$ is the closure in $\P(\O_K)$ of the $\O_K$-orbit of the set $V_{K,k}$ of $k$-free numbers in $\O_K$.
\end{theorem}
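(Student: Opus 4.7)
The plan is to prove the two inclusions between $\X(K,R)$ and the orbit closure $\overline{\O_K + V_{K,k}}$ separately. The inclusion $\overline{\O_K + V_{K,k}} \subseteq \X(K,R)$ is immediate: the set $V_{K,k}$ is itself admissible (being disjoint from $\p^k$ for every prime $\p$), translates of admissible sets are admissible, and the paper has already observed that $\X(K,R)$ is a closed subset of $\P(\O_K)$.

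For the reverse inclusion, fix $X \in \X(K,R)$ and a finite set $F \subseteq \O_K$; I must produce $t \in \O_K$ with $(t + V_{K,k}) \cap F = X \cap F$. Partition $F$ as $A \sqcup B$ with $A = X \cap F$ and $B = F \setminus X$, and substitute $s = -t$. The requirement becomes: $a + s \in V_{K,k}$ for every $a \in A$, and $b + s$ is divisible by $\p_b^k$ for some prime $\p_b$, for every $b \in B$. The first condition is precisely that $s$ lies in $V(K, R')$ for the sieve $R'$ defined by $R'_\p = -A + \p^k \subseteq \O_{K,\p}$; the second will be imposed as finitely many local congruences.

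To invoke \Cref{thm_sqfree_local_global} on $R'$, I need to check its two hypotheses. Since $k \geq 2$, we have $R'_\p \subseteq -A + \p^2$, so the finite set $T = -A$ witnesses the bounded-translate hypothesis. Moreover, for each prime $\p$ the admissibility witness $\delta_\p$ for $X$ gives $A \cap (\delta_\p + \p^k) = \emptyset$, so $-\delta_\p \notin R'_\p$; in particular $R'_\p \neq \O_{K,\p}$ for every prime $\p$. Ensuring this strict inequality for all (not merely cofinitely many) primes is the one place where the admissibility of $X$ enters essentially, and I expect it to be the main technical point.

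To finish, for each $b \in B$ choose a distinct prime $\p_b$ of norm so large that $\p_b^k$ does not divide any nonzero difference $a - b$ with $a \in A$; then $-b \notin R'_{\p_b}$, so the local datum $s_{\p_b} := -b$ is allowable in the sense of \Cref{thm_sqfree_local_global}. That theorem, applied to the sieve $R'$, the finite set $\Omega = \{\p_b : b \in B\}$, the congruence depth $k$, and the local data just constructed, yields $s \in V(K, R')$ with $s \equiv -b \pmod{\p_b^k}$ for all $b \in B$. Setting $t = -s$ then gives $a - t \in V_{K,k}$ for $a \in A$ and $b - t \in \p_b^k$ for $b \in B$, so $(t + V_{K,k}) \cap F = X \cap F$ as required.
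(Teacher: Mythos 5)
Your proposal is correct and follows essentially the same route as the paper's proof: both reduce to matching $X$ on a finite window, encode the positive conditions via the auxiliary sieve $R'_\p=-A+\p^k$ (with $T=-A$ and admissibility of $X$ giving $R'_\p\neq\O_{K,\p}$), impose the negative conditions at large distinct primes, and conclude with \Cref{thm_sqfree_local_global}. No substantive differences to report.
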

\begin{remark}
This fails spectacularly for $k=1$, where the closure of the $\O_K$-orbit of $V_{K,k}=\O_K^\times$ is much smaller than $\X_{K,k}$.
\end{remark}
\begin{proof}
It is clear that every $\O_K$-translate of $V_{K,k}$ lies in $\X(K,R)$. As $\X(K,R)$ is a closed subset of $\P(\O_K)$, it therefore contains the closure of the $\O_K$-orbit of $V_k$.

Conversely, we need to prove that every set $X\in\X(K,R)$ lies in this closure. By the definition of the product topology, this means that for every finite subset $M$ of $\O_K$, we need to find a translate $-\Delta+V_{K,k}$ of $V_{K,k}$ such that $(-\Delta+V_{K,k})\cap M=X\cap M$. Let $X'=X\cap M$ and let $\{y_1,\dots,y_r\}=M\setminus X'$ be its complement inside $M$. We need to find some $\Delta\in\O_K$ such that $X'\subseteq -\Delta+V_{K,k}$ and $y_i\notin -\Delta+V_{K,k}$ for $i=1,\dots,r$. The first condition is equivalent to the condition that $\Delta\notin-X'+\p^k$ for all primes $\p$. The second condition follows if for each $i=1,\dots,r$, there is a prime $\p_i$ with $\Delta\in -y_i+\p_i^k$.

As $y_i\notin X'$, we can choose distinct primes $\p_1,\dots,\p_r$ such that $y_i\notin X'+\p_i^k$. Applying \Cref{thm_sqfree_local_global} to the sieve $R'$ given by $R'_\p=-X'+\p^k$, which has $R'_\p\neq\O_{K,\p}$ for all primes $\p$ as $X'\in\X(K,R)$, we see that there is an element $\Delta\in V(K,R')=\O_K\setminus\bigcup_\p(-X'+\p^k)$ with $\Delta\equiv-y_i\mod\p_i^k$. This proves the claim.
\end{proof}

In response to a question from the referee, we prove:

\begin{theorem}
Let $R$ be a non-large sieve for $K$. Then, $\X(K,R)$ is the closure in $\P(\O_K)$ of the $\O_K$-orbit of some (admissible) set $X\in\X(K,R)$.
\end{theorem}
\begin{proof}
Let $(A_1,B_1),(A_2,B_2),\dots$ be an enumeration of all pairs $(A,B)$ of subsets $A\subseteq B\subseteq\O_K$ with $A\in\X(K,R)$ admissible and $B\subseteq\O_K$ finite. We iteratively construct elements $t_1,t_2,\dots$ of $\O_K$ such that the sets $t_i+B_i$ for $i=1,2,\dots$ are pairwise disjoint and such that $\bigcup_{1\leq j\leq i}(t_j+A_j)$ is admissible for all $i=1,2,\dots$. The claim then follows by taking $X := \bigcup_{i\geq1} (t_i+A_i)$: by construction, $X$ is admissible and every admissible set looks locally everywhere like some translate of $X$, so $\X(K,R)$ is the closure of the $\O_K$-orbit of $X$.

Assume we have constructed $t_1,\dots,t_i$ satisfying the conditions. To construct $t_{i+1}$, we apply \Cref{dynamical_crt2} to the sieve $R$, the finite set $M = \bigcup_{1\leq j\leq i}(t_j+B_j)\cup B_{i+1}$, the admissible sets $T_1=\bigcup_{1\leq j\leq i}(t_j+A_j)$ and $T_2=A_{i+1}$, and the open subsets $U_1=U_2=\prod_\p\O_{K,\p}$. We obtain elements $x_1,x_2$ of $\O_K$ such that $(x_1+T_1)\cup(x_2+T_2)$ is admissible and $x_1+M$ and $x_2+M$ are disjoint. Taking $t_{i+1} := x_2-x_1$, we see that $T_1\cup(x_2-x_1+T_2) = \bigcup_{1\leq j\leq i+1}(t_j+A_j)$ is admissible and that the sets $t_j+B_j\subseteq M$ and $t_{i+1}+B_{i+1}\subseteq x_2-x_1+M$ are disjoint for all $1\leq j\leq i$. This completes the induction step.
\end{proof}

\subsection{Open questions}\label{section_morphism_open_questions}

The results from \Cref{section_shift_morphisms} do not completely describe the morphisms between admissible shift spaces $\D(K,R)$, leaving space for future research of a more combinatorial nature, even over the field~$\Q$.

\subsubsection{Symmetry groups}

The proof that the symmetry group of $\D_{K,k}$ consists only of translations relies on the fact that the $k$-free sieve only excludes a single residue class for each prime~$\p$.
The following example shows that, without this restriction, not all elements of the symmetry group of $\D(K,R)$ are necessarily translations:

\begin{example}
Take $K=\Q$ and consider the non-large cofinite sieve given by
\[
R_p =
\begin{cases}
\emptyset,&p=2,3,\\
p\Z_p\cup(1+p\Z_p),&p\geq5.
\end{cases}
\]
Then, there is a symmetry $f:\X(\Q,R)\ra\X(\Q,R)$ given by
\[
f(X) = f^{-1}(X) = \{x\in\Z:\{x-1,x+1\}\subseteq X\Leftrightarrow x\notin X\}\quad\textnormal{for}\quad X\in\X(\Q,R).
\]
(See \Cref{fig_extra_symmetry}. The map $f$ can also be described by the window $M=\{-1,0,1\}$ and patterns $T_1=\{0\}$, $T_2=\{0,1\}$, $T_3=\{-1,0\}$, $T_4=\{-1,1\}$.)
\end{example}

\begin{figure}[h]
\begin{tikzpicture}[scale=0.4]
\def\beg{-4}
\def\en{20}
\newcommand{\drawboxes}[2]{
	\foreach \x in {#2} {
		\fill[lightgray] (\x,0) rectangle ({\x+1},1);
	}
	\foreach \x in {\beg,...,\en} {
		\draw (\x,0) rectangle ({\x+1},1);
	}
	\node[left] at ({\beg},0.5) {$\cdots$};
	\node[right] at ({\en+1},0.5) {$\cdots$};
	\node[left] at ({\beg-2},0.5) {#1};
}
\begin{scope}
	\drawboxes{$X = $}{-3,-2,-1,2,3,4,9,17,19}
\end{scope}
\draw[<->] ({(\beg+\en+1)/2},-0.75) -- node[right] {$f$} +(0,-1.5);
\begin{scope}[shift={(0,-4)}]
	\drawboxes{$f(X) = $}{-3,-1,2,4,9,17,18,19}
\end{scope}
\end{tikzpicture}
\caption{
An admissible set $X$ and its image $f(X)$. (Each square corresponds to an integer. The shaded squares are the elements of $X$ and $f(X)$, respectively.)
}\label{fig_extra_symmetry}
\end{figure}

\begin{remark}
\cite[Section 3.4]{dymek-kasjan-keller-automorphisms} gave interesting examples of $\mathcal B$-free systems with non-trivial symmetry groups (but where the elements of $\mathcal B$ are not pairwise coprime).
\end{remark}

\subsubsection{Factor systems}

\Cref{dynmor}\ref{dynmora}\ref{dynmorb} gives necessary but not sufficient conditions for the shift space $\D(L,S)$ to be a factor system of $\D(K,R)$. Things again get interesting if we allow $S_\p$ to be the union of several residue classes.

Here is an example of two non-isomorphic dynamical systems where one is a factor system of the other:

\begin{example}
Take $K=L=\Q$ and consider the non-large cofinite sieves given by
\begin{align*}
R_p =
\begin{cases}
\emptyset,&p=2,\\
p\Z_p,&p\geq3,
\end{cases}
&&\textnormal{and}&&
S_p =
\begin{cases}
\emptyset,&p=2,\\
p\Z_p\cup(1+p\Z_p),&p\geq3.
\end{cases}
\end{align*}
Then, there is a factor map $(f,A):\D(\Q,R)\ra\D(\Q,S)$ defined by $A=\id$ and
\[f(X)=\{x\in\Z:\{x,x+1\}\subseteq X\}\quad\textnormal{for}\quad X\in\X(\Q,R).
\]
A preimage of $Y\in\X(\Q,S)$ is
\[
\{x\in\Z:x\in Y\textnormal{ or }x-1\in Y\} \in \X(\Q,R).
\]
This factor map satisfies \Cref{dynmor}\ref{dynmorb} with the pattern $T=\{0,1\}$. (See \Cref{fig_factor}.)
\end{example}

\begin{figure}[h]
\begin{tikzpicture}[scale=0.4]
\def\beg{0}
\def\en{24}
\newcommand{\drawboxes}[2]{
	\foreach \x in {#2} {
		\fill[lightgray] (\x,0) rectangle ({\x+1},1);
	}
	\foreach \x in {\beg,...,\en} {
		\draw (\x,0) rectangle ({\x+1},1);
	}
	\node[left] at ({\beg},0.5) {$\cdots$};
	\node[right] at ({\en+1},0.5) {$\cdots$};
	\node[left] at ({\beg-2},0.5) {#1};
}
\begin{scope}[shift={(0,1.5)}]
	\drawboxes{$X_1 = $}{1,6,7,9,12,13,16,18,19,21,22}
\end{scope}
\begin{scope}
	\drawboxes{$X_2 = $}{6,7,12,13,18,19,21,22}
\end{scope}
\draw[->] ({(\beg+\en+1)/2},-0.75) -- node[right] {$f$} +(0,-1.5);
\begin{scope}[shift={(0,-4)}]
	\drawboxes{$f(X_i) = $}{6,12,18,21}
\end{scope}
\end{tikzpicture}
\caption{
Two admissible sets $X_1,X_2$ with the same image $f(X_i)$.
}\label{fig_factor}
\end{figure}

There are also examples of factor maps for which there is an exceptional prime $\p$ not satisfying the condition in \Cref{dynmor}\ref{dynmorb}:

\begin{example}
Take $K=L=\Q$ and consider the non-large cofinite sieves given by
\begin{align*}
R_p =
\begin{cases}
\emptyset,&p=2,3,\\
p\Z_p,&p=5,\\
\{0,1,2\}+p\Z_p,&p\geq7,
\end{cases}
&&\textnormal{and}&&
S_p =
\begin{cases}
\emptyset,&p=2,3,\\
\{0,3\}+p\Z_p,&p=5,\\
\{0,1,2,3,4,5\}+p\Z_p,&p\geq7.
\end{cases}
\end{align*}
There is a factor map $(f,A):\D(K,R)\ra\D(L,S)$ with $A=\id$ and with $f$ defined by the window $M=\{0,1,2,3\}$ and family of patterns $\T=\{T_1,T_2\}$, where $T_1=\{0,1,3\}$ and $T_2=\{0,2,3\}$. If $Y\in\X(\Q,S)$ is disjoint from $\delta_5+S_5$, a preimage of $Y$ is
\[
\bigcup_{\substack{y\in Y:\\y-\delta_5\equiv1\bmod5}}(y+T_1) \cup \bigcup_{\substack{y\in Y:\\y-\delta_5\equiv2,4\bmod5}}(y+T_2) \in \X(\Q,R).
\]
For both patterns $T_i\in\T$, the set $S_5$ is not a translate of $-T_i+R_5$. The surjection $f:\X(K,R)\twoheadrightarrow\X(L,S)$ does not have a continuous section.
\end{example}

\subsection{Patch-counting entropy}

In addition to \Cref{dynmor}\ref{dynmora}\ref{dynmorb}, there is another obstruction to the existence of factor maps given by the \emph{patch-counting entropy}, which coincides with the topological entropy (see \cite[Section 3]{baake-lenz-richard}).

\begin{lemma}
Let $R$ be any sieve for $K$. Choose any $\Q$-vector space norm $\|\cdot\|$ on $K$. Denote the ball of radius $N$ by $B(N)$. The \emph{patch-counting entropy} of $\D(K,R)$ is
\[
E(K,R) := \lim_{N\ra\infty} \frac{\log\#\{E\subseteq\O_K\cap B(N):E\in\X(K,R)\}}{\#(\O_K\cap B(N))} = \log(2)\cdot\prod_\p(1-\meas(R_\p)).
\]
\end{lemma}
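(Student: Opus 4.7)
The proof will match upper and lower bounds for the quantity $L_N := \log \#\{E \subseteq \Lambda_N : E \in \X(K,R)\}/\#\Lambda_N$, where $\Lambda_N := \O_K \cap B(N)$.

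For the upper bound I would truncate to a finite set of primes. Fix $X \geq 1$; admissibility forces that for each prime $\p$ with $\Nm(\p) \leq X$ there exists some $\delta_\p \in \O_{K,\p}$ with $E \cap (\delta_\p + R_\p) = \emptyset$. Letting $\delta = (\delta_\p)_{\Nm(\p) \leq X}$ range over the finitely many tuples of residues modulo the defining moduli of the $R_\p$, one obtains
\[
\#\{E \subseteq \Lambda_N : E \in \X(K,R)\} \leq \sum_\delta 2^{|\Lambda_N \cap V^X_\delta|}, \qquad V^X_\delta := \O_K \setminus \bigcup_{\Nm(\p) \leq X}(\delta_\p + R_\p).
\]
The Chinese remainder theorem together with standard lattice-point equidistribution in $B(N)$ gives $|\Lambda_N \cap V^X_\delta| = \#\Lambda_N \cdot \prod_{\Nm(\p) \leq X}(1-\meas(R_\p)) + o(\#\Lambda_N)$ uniformly in $\delta$, so $\limsup_N L_N \leq \log(2) \prod_{\Nm(\p) \leq X}(1-\meas(R_\p))$. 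Letting $X \to \infty$ yields $\limsup_N L_N \leq \log(2) \prod_\p(1-\meas(R_\p))$.

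For the lower bound I would use a probabilistic first-moment argument, which is the key idea. Equip $\Omega := \prod_\p (\O_{K,\p}/\p^{k_\p})$, where $k_\p$ is chosen so that $R_\p$ is defined modulo $\p^{k_\p}$, with the product of uniform measures. For a random $\delta \in \Omega$, put $V_\delta := \O_K \setminus \bigcup_\p (\delta_\p + R_\p)$; every subset of $V_\delta \cap \Lambda_N$ is admissible, with the translates $\delta_\p + R_\p$ themselves serving as witnesses. For any fixed $x \in \O_K$, the events $\{\delta_\p \notin x - R_\p\}$ are independent across $\p$ with probabilities $1-\meas(R_\p)$, so
\[
\Pr[x \in V_\delta] = \prod_\p(1-\meas(R_\p)) \quad\text{and}\quad \mathbb{E}\,|\Lambda_N \cap V_\delta| = \#\Lambda_N \cdot \prod_\p(1-\meas(R_\p)).
\]
A first-moment argument extracts some $\delta$ with $|\Lambda_N \cap V_\delta| \geq \#\Lambda_N \cdot \prod_\p(1-\meas(R_\p))$, whence $\#\{E \subseteq \Lambda_N : E \in \X(K,R)\} \geq 2^{\#\Lambda_N \cdot \prod_\p(1-\meas(R_\p))}$ and $\liminf_N L_N \geq \log(2) \prod_\p(1-\meas(R_\p))$.

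Combining the two bounds shows that the limit exists and equals $\log(2)\prod_\p(1-\meas(R_\p))$, with the convention $\log(2) \cdot 0 = 0$ when the product vanishes (e.g.~when $\sum_\p \meas(R_\p) = \infty$). The main obstacle is the lower bound: a direct construction of many admissible subsets of $\Lambda_N$ would require a density estimate for $V(K, R')$ for the translated sieve $R'_\p := \delta_\p + R_\p$, but the analogue of \Cref{lemma_sqfree_density} can fail for a general sieve; averaging over $\delta$ extracts a single good choice without any such hypothesis. The only routine technical step is verifying the $o(\#\Lambda_N)$ error in CRT-equidistribution, which holds for lattices $\O_K \cap B(N)$ with balls of reasonable shape.
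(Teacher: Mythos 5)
Your proposal is correct and follows essentially the same route as the paper's proof: the upper bound by truncating to primes of bounded norm, summing over the finitely many translate choices and applying the Chinese remainder theorem, and the lower bound by averaging over random translates $\delta_\p$ and extracting a good choice via a first-moment argument. The only cosmetic difference is that you phrase the randomness over $\prod_\p \O_{K,\p}/\p^{k_\p}$ rather than over $\prod_\p \O_{K,\p}$ with Haar measure, which is equivalent.
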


This lemma was proved in \cite[Theorem 5.1]{baake-bustos-nickel-power-free-points} for $k$-free sieves with $k\geq2$. Their proof does not quite go through when $k=1$ due to the failure of the local-global principle. We therefore use a slightly different proof for the inequality ``$\geq$''.

\begin{proof}
\begin{description}
\item[``$\leq$'']
Let $R_\p$ be defined modulo $\p^{k(\p)}$. For any $M>1$ and any tuple
\[
\delta=(\delta_\p)_\p\in\prod_{\p:\Nm(\p)\leq M}\O_K/\p^{k(\p)},
\]
define
\[
U(\delta) = \Bigg(\O_K\setminus\bigcup_{\p:\Nm(\p)\leq M}(\delta_\p+R_\p)\Bigg) \cap B(N).
\]
For any $M$, we have
\begin{align*}
\#\{E\subseteq\O_K\cap B(N):E\in\X(K,R)\}
&\leq \sum_{\delta\textnormal{ as above}} \#\{E\subseteq U(\delta)\}
= \sum_{\delta\textnormal{ as above}} 2^{\#U(\delta)},
\end{align*}
By the Chinese remainder theorem,
\[
\lim_{N\ra\infty} \frac{\#U(\delta)}{\#(\O_K\cap B(N))} = \prod_{\p:\Nm(\p)\leq M}(1-\meas(R_\p)).
\]
Since the number of tuples $\delta$ is independent of $N$, we conclude that
\[
\limsup_{N\ra\infty} \frac{\log\#\{E\subseteq\O_K\cap B(N):E\in\X(K,R)\}}{\#(\O_K\cap B(N))}
\leq \log(2)\cdot\prod_{\p:\Nm(\p)\leq M}(1-\meas(R_\p)).
\]
Letting $M$ go to infinity, we get
\[
\limsup_{N\ra\infty} \frac{\log\#\{E\subseteq\O_K\cap B(N):E\in\X(K,R)\}}{\#(\O_K\cap B(N))}
\leq \log(2)\cdot\prod_{\p}(1-\meas(R_\p)).
\]
\item[``$\geq$''] Independently for all primes $\p$, pick a random element $\delta_\p\in\O_{K,\p}$ uniformly according to the $\p$-adic (probability) measure. For any $x\in\O_K$, the probability that $x\notin \delta_\p+R_\p$ holds for all primes~$\p$ is $\prod_\p(1-\meas(R_\p))$. Hence, the expected number of $x\in\O_K\cap B(N)$ with $x\notin \delta_\p+R_\p$ for all $\p$ is $\#(\O_K\cap B(N))\cdot\prod_\p(1-\meas(R_\p))$. In particular, there exist choices $\delta_\p\in\O_{K,\p}$ for all primes $\p$ such that the number of such $x$ is at least $\#(\O_K\cap B(N))\cdot\prod_\p(1-\meas(R_\p))$. Then,
\begin{align*}
&\log \#\{E\subseteq\O_K\cap B(N):E\in\X(K,R)\}
\geq{} \log \#\{E\subseteq\O_K\setminus\bigcup_\p(\delta_\p+R_\p)\} \\
={}& \log(2)\cdot\#(\O_K\setminus\bigcup_\p(\delta_\p+R_\p))
\geq{} \log(2)\cdot \#(\O_K\cap B(N))\cdot\prod_\p(1-\meas(R_\p)).
\qedhere
\end{align*}
\end{description}
\end{proof}

\begin{remark}
If $\D(L,S)$ is a factor system of $\D(K,R)$, then the patch-counting entropies satisfy $E(K,R)\geq E(L,S)$.
\end{remark}

\section{Units}\label{section_units}

The goal of this section is to prove the following statement:

\begin{theorem}\label{units_thm}
Let $K$ be a totally real number field and let $L$ be an étale $\Q$-algebra. Any $\Z$-linear map $A:\O_K\ra\O_L$ with $A(\O_K^\times)\subseteq\O_L^\times$ is of the form $A=M_\varepsilon\circ\tau$ for a $\Q$-algebra homomorphism $\tau:K\ra L$ and a unit $\varepsilon\in\O_L^\times$.
\end{theorem}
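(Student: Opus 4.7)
My plan is to extend $A$ by $\Q$-linearity to a map $K\to L$ and then by $\C$-linearity to $A_\C:K\otimes_\Q\C\to L\otimes_\Q\C$. Since $K$ is totally real of degree $n$, the embeddings $\sigma_1,\dots,\sigma_n$ of $K$ into $\C$ are all real and yield an identification $K\otimes\C\cong\C^n$; writing $m:=\dim_\Q L$, the $m$ embeddings $\tilde\sigma_1,\dots,\tilde\sigma_m$ of $L$ give $L\otimes\C\cong\C^m$. Let $\psi_j:=\tilde\sigma_j\circ A_\C:\C^n\to\C$ denote the $j$-th coordinate of $A_\C$. The hypothesis $A(\O_K^\times)\subseteq\O_L^\times$ then translates into $\prod_{j=1}^m\psi_j(u)=N_{L/\Q}(A(u))\in\{\pm1\}$ for every $u\in\O_K^\times$, so the homogeneous polynomial $P:=\prod_j\psi_j\in\C[x_1,\dots,x_n]$ of degree $m$ will satisfy $P(u)^2=1$ on all of $\O_K^\times$ (embedded in $\C^n$ via the $\sigma_i$).

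\textbf{Zariski density of units.} The heart of the proof, and the only place the totally real hypothesis enters essentially, will be to show that $\Gamma:=\O_K^\times\cap\ker(N_{K/\Q})$ is Zariski dense in the algebraic subtorus $T:=\{x\in(\C^\times)^n:\prod_ix_i=1\}$. Dirichlet's unit theorem gives $\operatorname{rank}(\Gamma)=n-1=\dim T$, and I will invoke the general fact that any subgroup of $(\C^\times)^d$ whose torsion-free rank equals $d$ is Zariski dense: its Zariski closure is a closed subgroup whose identity component is a subtorus of rank equal to its dimension, so that dimension must be $d$, making the closure all of $(\C^\times)^d$. For a general number field one only has $\operatorname{rank}(\O_K^\times)=r_1+r_2-1<n-1$, which is precisely why this step would fail otherwise.

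\textbf{Polynomial identity.} From the Zariski density, $P^2-1$ will vanish on all of $T$, and since $T$ is irreducible, the polynomial $P$ itself will be constant on $T$, equal to some $\varepsilon\in\{\pm1\}$. Setting $N(x):=\prod_ix_i$, the polynomial $N-1$ is irreducible in $\C[x_1,\dots,x_n]$ with vanishing locus $T$, so $N-1$ will divide $P-\varepsilon$. Writing $P-\varepsilon=(N-1)Q$ and comparing homogeneous components on both sides (using that $P$ is pure of degree $m$, $\varepsilon$ of degree $0$, and $N$ of degree $n$) will force $n\mid m$ and pin down $P(x)=\varepsilon\,N(x)^{m/n}=\varepsilon\prod_ix_i^{m/n}$ as polynomials.

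\textbf{Factorization and descent.} Unique factorization in $\C[x_1,\dots,x_n]$ will then force each linear form $\psi_j$ to be a scalar multiple of a single variable: $\psi_j(x)=c_jx_{\rho(j)}$ for some $c_j\in\C^\times$ and $\rho(j)\in\{1,\dots,n\}$. Hence $A_\C=M_c\circ\tau$, where $\tau:\C^n\to\C^m$, $(z_i)_i\mapsto(z_{\rho(j)})_j$, is a $\C$-algebra homomorphism and $M_c$ is coordinate-wise multiplication by $c:=(c_j)_j$. Evaluating at $1\in\O_K$ gives $c=A(1)\in\O_L^\times$ (because $1$ is a unit), so $c$ is invertible in $L$; setting $\tau_0:K\to L$, $\tau_0(k):=c^{-1}A(k)$, the map $\tau_0$ is the restriction of the $\C$-algebra homomorphism $\tau$ to the $\Q$-subspace $K\subset K\otimes\C$, hence is automatically $\Q$-linear and multiplicative, i.e., a $\Q$-algebra homomorphism. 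This will give $A=M_c\circ\tau_0$ with $c\in\O_L^\times$, as required.
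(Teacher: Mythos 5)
Your overall strategy is the same as the paper's: complexify $A$, use Zariski density of the norm-one units inside the norm-one hypersurface, and deduce that the coordinates of $A$ are monomial (your unique-factorization argument for that last part simply re-proves \Cref{geometric_lemma}, and is fine, as is the final descent). However, the density step --- which you yourself identify as the heart of the proof --- rests on a false general statement. It is not true that a subgroup of $(\C^\times)^d$ of torsion-free rank $d$ is Zariski dense: the abstract rank of a subgroup gives no lower bound on the dimension of its Zariski closure, since a torus of any positive dimension already contains free abelian subgroups of arbitrarily large rank. Concretely, inside $T=\{x\in(\C^\times)^3:x_1x_2x_3=1\}\cong(\C^\times)^2$ the subgroup generated by $(2,2,\tfrac14)$ and $(3,3,\tfrac19)$ is free of rank $2=\dim T$, yet its closure lies in the proper subtorus $\{x_1=x_2\}\cap T$. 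So the inference ``rank $=\dim T$, hence the closure is all of $T$'' does not work, and with it the claim that $\Gamma$ is dense in $T$ is unproven; this is exactly the point where the real content of the theorem sits.

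What is actually needed is to rule out that $\sigma(\Gamma)$ lies in a proper algebraic subgroup of $T$, equivalently that some nontrivial monomial character $x\mapsto x^a$ (with $a\in\Z^n$ not proportional to $(1,\dots,1)$) has finite image on $\sigma(\Gamma)$. Here the totally real hypothesis must be used through the logarithmic embedding, not just through the rank count: if $\chi(\sigma(\varepsilon))$ were a root of unity for all $\varepsilon\in\Gamma$, then $\sum_i a_i\log|\sigma_i(\varepsilon)|=0$ for all $\varepsilon\in\Gamma$, and since for totally real $K$ the vectors $(\log|\sigma_1(\varepsilon)|,\dots,\log|\sigma_n(\varepsilon)|)$ span the full trace-zero hyperplane by Dirichlet's unit theorem, this forces $a$ to be proportional to $(1,\dots,1)$, i.e.\ $\chi$ trivial on $T$ --- a contradiction which does give your density claim. (The paper proves the same density statement instead by a Newton-polytope/dominant-monomial argument, again powered by the fact that $l(U)$ is a full lattice in $\R^{n-1}$; see also \Cref{lemma_norm_implies_b} for the subsequent descent.) Once the density step is repaired in this way, the rest of your proof --- the divisibility $N-1\mid P\mp1$, the comparison of homogeneous components forcing $n\mid m$ and $P=\pm N^{m/n}$, the monomial form of each $\psi_j$, and the identification $A=M_{A(1)}\circ\tau_0$ with $A(1)\in\O_L^\times$ --- is correct.
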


Let $K$ be a totally real number field of degree $n$. The embeddings $\sigma_1,\dots,\sigma_n:K\hookrightarrow\C$ combine to an embedding $\sigma:K\hookrightarrow\C^n$. Let
\[
U = \{\varepsilon\in\O_K^\times : \Nm(\varepsilon) = 1\}
\]
and
\[
W = \{x\in\C^n : \Nm(x) = 1\}.
\]
Note that $W$ is an irreducible subvariety of $\C^n$ with coordinate ring
\[
\C[X_1,\dots,X_n]/(X_1\cdots X_n-1) \cong \C[X_1,\dots,X_{n-1},X_1^{-1},\dots,X_{n-1}^{-1}] =: R.
\]
(The isomorphism sends $X_n$ to $X_1^{-1}\cdots X_{n-1}^{-1}$.)

\begin{lemma}
Assume $K$ is a totally real number field. Then, the image $\sigma(U)$ is Zariski dense in $W$.
\end{lemma}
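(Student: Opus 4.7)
The plan is to exhibit $\sigma(U)$ as a Zariski-dense subset of $W$ by showing that the only regular function on $W$ vanishing on $\sigma(U)$ is the zero function, using Dirichlet's unit theorem combined with linear independence of characters.

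First I would identify regular functions on $W$ with Laurent polynomials: via the isomorphism $W \cong \Spec R$, every regular function on $W$ has the form $f = \sum_{\alpha\in\Z^{n-1}}c_\alpha X^\alpha$ (finite sum), and $\sigma(U) \subseteq W$ gets sent to $\{(\sigma_1(u),\dots,\sigma_{n-1}(u)) : u \in U\}$. For each $\alpha\in\Z^{n-1}$, define the multiplicative character
\[
\chi_\alpha\colon U\ra\C^\times,\qquad \chi_\alpha(u)=\prod_{i=1}^{n-1}\sigma_i(u)^{\alpha_i}.
\]
Then $f$ vanishes on $\sigma(U)$ if and only if $\sum_\alpha c_\alpha\chi_\alpha(u)=0$ for all $u\in U$.

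The key step is to show that the characters $\chi_\alpha$ are pairwise distinct, or equivalently that $\chi_\gamma$ is non-trivial for every $0\ne\gamma\in\Z^{n-1}$. Suppose $\chi_\gamma(u)=1$ for all $u\in U$; then in particular $|\chi_\gamma(u)|=1$, i.e.
\[
\sum_{i=1}^{n-1}\gamma_i\log|\sigma_i(u)|=0\qquad\text{for all }u\in U.
\]
By Dirichlet's unit theorem (using that $K$ is totally real of degree $n$), the logarithmic embedding $u\mapsto(\log|\sigma_1(u)|,\dots,\log|\sigma_n(u)|)$ sends $U$ to a lattice of full rank $n-1$ in the trace-zero hyperplane $H=\{y\in\R^n:y_1+\cdots+y_n=0\}$; in particular its $\R$-span is all of $H$. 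Hence the linear functional $\ell(y)=\sum_{i=1}^{n-1}\gamma_i y_i$ vanishes on $H$, forcing it to be a scalar multiple of $y_1+\cdots+y_n$. Since the coefficient of $y_n$ in $\ell$ is $0$, that scalar is $0$ and $\gamma=0$, a contradiction.

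With pairwise distinct characters in hand, the classical linear independence of characters (Dedekind/Artin) applied to $U\to\C^\times$ forces all $c_\alpha=0$, so $f=0$. Therefore every regular function on $W$ vanishing on $\sigma(U)$ is zero, which is exactly Zariski density of $\sigma(U)$ in the irreducible variety $W$. The main potential obstacle is just keeping the character-theoretic setup clean, but once Dirichlet's theorem is invoked everything is formal.
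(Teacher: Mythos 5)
Your proof is correct, but it takes a genuinely different route from the paper. You reduce Zariski density to the statement that no nonzero Laurent polynomial $\sum_\alpha c_\alpha X^\alpha$ can vanish on $\sigma(U)$, interpret each monomial as a character $\chi_\alpha:U\ra\C^\times$, and invoke Dedekind--Artin linear independence of characters, so the only thing left to check is that $\chi_\gamma$ is nontrivial for $\gamma\neq0$; your verification of this via the logarithmic embedding and the trace-zero hyperplane is sound (the functional $\sum_{i<n}\gamma_i y_i$ vanishing on the full-rank log lattice of $U$ forces $\gamma=0$, using that $U$ has finite index in $\O_K^\times$ because unit norms are $\pm1$). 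The paper instead argues analytically: given a nonzero $f$, it picks a vertex $m$ of the Newton polytope, uses Dirichlet's unit theorem to find $\varepsilon\in U$ whose log vector lies in the open cone $\{t:(m-m')\cdot t>0\}$, and shows that for large $s$ the monomial $X^m$ dominates at $\varepsilon^s$, so $f(\varepsilon^s)\neq0$. Both arguments rest on Dirichlet's theorem, but you use only that the log image of $U$ spans the hyperplane (a slightly weaker consequence than the paper's need for a lattice point in a prescribed open cone), and you trade the paper's growth estimate for a purely algebraic appeal to independence of characters; the paper's version is more self-contained (no character theory needed) and makes the nonvanishing point explicit as a power of a single unit, while yours is shorter and formally cleaner once the characters are shown to be pairwise distinct.
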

\begin{proof}
Consider any $0\neq f\in R$. For $m=(m_1,\dots,m_{n-1})\in\Z^{n-1}$, write $X^m = X_1^{m_1}\cdots X_{n-1}^{m_{n-1}}$. Let $M$ be the set of $m\in\Z^{n-1}$ for which $f$ has a non-zero $X^m$-coefficient.

Pick a corner $m\in M$ of the convex polytope spanned by $M$ (also called the Newton polytope of the Laurent polynomial $f$). Denote the standard inner product on $\R^{n-1}$ by `$\cdot$'. Then,
\[
C := \{t\in\R^{n-1}: (m-m')\cdot t > 0\textnormal{ for all }m\neq m'\in M\}
\]
is a non-empty open convex cone in $\R^{n-1}$.

By Dirichlet's unit theorem, the image $l(\O_K^\times)$ of $\O_K^\times$ under the map
\[
\begin{tikzcd}[row sep=0]
\O_K^\times \rar{l} & \R^{n-1} \\
\varepsilon \rar[mapsto] & (\log|\sigma_1(\varepsilon)|,\dots,\log|\sigma_{n-1}(\varepsilon)|)
\end{tikzcd}
\]
is a full lattice in $\R^{n-1}$. Since $U$ is a subgroup of finite index of $\O_K^\times$, the image $l(U)$ of $U$ is also a full lattice.

A non-empty open cone always intersects any full lattice. Pick some $\varepsilon\in U$ so that $l(\varepsilon)\in C$. By definition,
\[
(m-m')\cdot l(\varepsilon) > 0\qquad\textnormal{for all }m\neq m'\in M.
\]
It follows that
\[
(m-m')\cdot l(\varepsilon^s) = s\cdot(m-m')\cdot l(\varepsilon)\textnormal{ goes to $\infty$ as $s\ra\infty$} \qquad\textnormal{for all }m\neq m'\in M.
\]
For any monomial $g=X^{m'}\in R$ and any $\zeta\in U$, we have $|g(\zeta)| = e^{m'\cdot l(\zeta)}$. Hence, the quotient of the $X^m$-monomial and any $X^{m'}$-monomial in $f$, both evaluated at $\varepsilon^s$, goes to $\infty$ as $s\ra\infty$. For sufficiently large $s$, the $X^m$-monomial will dominate the sum of all other monomials in $f$, which implies that $f(\varepsilon^s)\neq0$.
\end{proof}

\begin{proof}[Proof of \Cref{units_thm}]
By assumption, for all $x\in\O_K^\times$, we have $A(x)\in\O_L^\times$, so in particular $\Nm(A(x))=\pm1$.

We extend the $\Z$-linear map $A:\O_K\ra\O_L$ to a $\C$-linear map $A:K\otimes\C\ra L\otimes\C$.
Using the isomorphisms $K\otimes\C\cong\C^n$ and $L\otimes\C\cong\C^m$, we can interpret $A:K\otimes\C\ra L\otimes\C$ as a map $A':\C^n\ra\C^m$. Since $U\subseteq\O_K^\times$ is Zariski dense in $W\subseteq\C^n\cong K\otimes\C$ by the previous lemma, we conclude from the first paragraph of the proof that the polynomial
\[
f := (\Nm(A'(X))-1)(\Nm(A'(X))+1)
\]
vanishes on $W$.

We have $A'((\C^\times)^n) \subseteq (\C^\times)^m$: For any $x \in(\C^\times)^n$, pick an $n$-th root $\lambda$ of $\Nm(x)\in\C^\times$. Then, $\Nm(\lambda^{-1}\cdot x)=1$, so $\lambda^{-1}\cdot x$ lies in $W$. We have seen above that this implies that $\lambda^{-1}\cdot A'(x) = A'(\lambda^{-1}\cdot x)$ has norm $\pm1$. In particular, $A'(x)\in(\C^\times)^m$.

Hence, $A((K\otimes\C)^\times)\subseteq(L\otimes\C)^\times$. From \Cref{lemma_norm_implies_b}, we learn that $A$ is of the form $M_\varepsilon\circ\tau$ for some $\Q$-algebra homomorphism $\tau:K\ra L$ and some $\varepsilon\in\O_L$. Since $A(\O_K^\times)\subseteq\O_L^\times$, we in fact have $\varepsilon\in\O_L^\times$.
\end{proof}

\bibliographystyle{alphaurl}
\bibliography{refs.bib}

\end{document}